\par\end{em}\end{center}\end{quotation}}
\newtheorem{theorem}{Theorem}[section]
\newtheorem{corollary}[theorem]{Corollary}
\newtheorem{lemma}[theorem]{Lemma}
\newtheorem{proposition}[theorem]{Proposition}
\newtheorem{conjecture}[theorem]{Conjecture}
\theoremstyle{definition}
\newtheorem{definition}[theorem]{Definition}
\newtheorem{remark}[theorem]{Remark}
\newtheorem{example}[theorem]{Example}
\DeclareMathOperator{\Perf}{\mathsf{Perf}}
\DeclareMathOperator{\add}{\mathsf{add}}
\DeclareMathOperator{\Coh}{\mathsf{Coh}}
\DeclareMathOperator{\Tri}{\mathsf{Tri}}
\DeclareMathOperator{\Hom}{\mathsf{Hom}}
\DeclareMathOperator{\Ext}{\mathsf{Ext}}
\DeclareMathOperator{\Aut}{\mathsf{Aut}}
\DeclareMathOperator{\End}{\mathsf{End}}
\DeclareMathOperator{\Mat}{\mathsf{Mat}}
\DeclareMathOperator{\Ob}{\mathsf{Ob}}
\DeclareMathOperator{\Rep}{\mathsf{Rep}}
\DeclareMathOperator{\catD}{\mathfrak{D}}
\DeclareMathOperator{\catE}{\mathfrak{E}}
\DeclareMathOperator{\catF}{\mathfrak{F}}
\DeclareMathOperator{\catA}{\mathfrak{A}}
\DeclareMathOperator{\catB}{\mathfrak{B}}
\DeclareMathOperator{\catC}{\mathfrak{C}}
\DeclareMathOperator{\dX}{\mathfrak{X}}
\DeclareMathOperator{\ddX}{X}
\DeclareMathOperator{\dY}{\mathfrak{Y}}
\DeclareMathOperator{\dF}{F}
\DeclareMathOperator{\dE}{E}
\DeclareMathOperator{\el}{\mathsf{El}}
\newcommand{\bul}{\scriptstyle{\bullet}}
\def\bop{\bigoplus}
\newcommand{\kk}{\mathbbm{k}}
\newcommand{\CC}{\mathbb{C}}
\newcommand{\NN}{\mathbb{N}}
\newcommand{\ZZ}{\mathbb{Z}}
\newcommand{\sm}{\mathsf{m}}
\newcommand{\EE}{\mathsf{E}}
\newcommand{\MM}{\mathsf{M}}
\newcommand{\PP}{\mathbb{P}}
\newcommand{\sfK}{\mathsf{K}}
\newcommand{\kB}{\mathcal{B}}
\newcommand{\kF}{\mathcal{F}}
\newcommand{\kH}{\mathcal{H}}
\newcommand{\kI}{\mathcal{I}}
\newcommand{\kJ}{\mathcal{J}}
\newcommand{\kO}{\mathcal{O}}
\newcommand{\kL}{\mathcal{L}}
\newcommand{\kP}{\mathcal{P}}
\newcommand{\kQ}{\mathcal{Q}}
\newcommand{\kS}{\mathcal{S}}
\newcommand{\lar}{\longrightarrow}
\def\lha{\leftharpoondown}
\def\rha{\rightharpoondown\phantom{I\!}\!}
 \def\sP{\mathsf P}
\def\sD{\mathsf D} 
\def\sF{\mathsf F} \def\sS{\mathsf S}
\def\sG{\mathsf G} \def\sT{\mathsf T}
\def\sH{\mathsf H} 
\def\sI{\mathsf I}
\def\sL{\mathsf L}
\def\kron#1#2{\xymatrix@C=2em{{#1}\ar@/^3pt/[r]\ar@/_3pt/[r]&{#2}}}
\def\bu{{\scriptscriptstyle\bullet}}
\newcommand{\hdline}[3]{\draw[dashed] (#1-#2-1.south west) -- (#1-#2-#3.south east);}
\newcommand{\vdline}[3]{\draw[dashed] (#1-1-#3.north east) -- (#1-#2-#3.south east);}
\tikzset{
  decorate with/.style={decorate,decoration={shape backgrounds,shape=#1,shape size=1.5mm}},
   deco/.style={decorate with=dart},
   ordi/.style={draw,-stealth,  thick},
   conj/.style={dashed, draw, thick},
   ve/.style={circle, draw, thick, fill=blue!20, inner sep=1pt, outer sep=2pt, minimum size=7pt},
    dot/.style={fill=blue!10,circle,draw, inner sep=1pt, minimum size=5pt},
  dv/.style={star,star points=5,
star point ratio=2, draw, thick, fill=green!20, inner sep=1pt,outer sep=2pt,minimum size=7pt}
}
\tikzset{
    tbl5 nodes/.style={
        rectangle,
        execute at begin node=$,
       execute at end node=$,
       fill=blue!5,
        align=center,
        text depth=0.5ex,
        text height=2ex,
        inner xsep=0pt,
        outer sep=0pt,
           },
    tbl5/.style={
        matrix of nodes,
        row sep=-\pgflinewidth,
        column sep=-\pgflinewidth,
        nodes={
            tbl5 nodes
        },
        execute at empty cell={\node[draw=none]{};}
    }
  }
\title[Derived categories of gentle and skew--gentle algebras]{On the derived categories of gentle and skew--gentle algebras: homological algebra and matrix problems}
\author{Igor Burban}
\address{
Universit\"at zu K\"oln,
Mathematisches Institut,
Weyertal 86-90,
D-50931 K\"oln,
Germany
}
\email{burban@math.uni-koeln.de}
\urladdr{www.mi.uni-koeln.de/$\sim$burban/}
\author{Yuriy Drozd}
\address{
 Institute of Mathematics\\
National Academy of Sciences of Ukraine,
Tereschenkivska str. 3,
01004 Kyiv, Ukraine}
\email{drozd@imath.kiev.ua, y.a.drozd@gmail.com}
\urladdr{www.imath.kiev.ua/$\sim$drozd}
\subjclass[2010]{Primary 16E35, 16G60, 14A22, 16S38}
\begin{document}
\maketitle

\begin{abstract}
In this paper,  we investigate properties of the bounded derived category of finite dimensional modules over a gentle or skew--gentle algebra.
We show that the Rouquier dimension of the derived category of such an algebra is at most one. Using this  result, we prove that the Rouquier dimension of an arbitrary tame projective curve is equal to one, too. Finally, we elaborate the classification of indecomposable objects of the (possibly unbounded) homotopy category of projective modules of a gentle algebra.
\end{abstract}

\tableofcontents

\section{Introduction}
Motivated by the study of finite length admissible representations of the Lie group $\mathsf{SL}_2(\CC)$, Gelfand and Ponomarev classified   in \cite{GP}
all indecomposable finite dimensional modules over  the completed path algebra $C$  of the following quiver with relations:
\begin{equation*}\label{E:ZhelobQuiver}
\xymatrix
{
\bul  \ar@(ul, dl)_{x} \ar@/^/[rr]^{y}  & & \bul \ar@/^/[ll]^{z}
} \qquad xz  = 0, \, y x  = 0.
\end{equation*}
In the work \cite{GP} it was also observed that from the representation--theoretic point of view, the algebra $C$ is closely related with the algebra $N := \CC\llbracket u, v\rrbracket/(uv)$ (see also Remark \ref{R:GPRevisited}).  Gelfand and Ponomarev   proved  that there are two types of indecomposable finite--dimensional $C$--modules (as well as $N$--modules): those described by a certain \emph{discrete} parameter $\underline{v}$ and the
ones described by a tuple $(\underline{w}, m, \pi)$, where $\underline{w}$ is a certain \emph{discrete} parameter, $m \in \NN$ and $\pi \in \CC^*$ is a \emph{continuous} moduli parameter. This, in particular, means  that the categories  of finite--dimensional $C$--modules and $N$--modules are  representation--tame.
The work \cite{GP}  was a  starting point of an extensive study of representation--tame  categories, which came in the years afterwards. Appropriate analogues of the indecomposable  modules of the first type were called \emph{strings} (discrete series), whereas generalizations  of the indecomposable modules of the second type were called \emph{bands}. We also want to mention that arround the same time, Nazarova and Roiter obtained a classification of all indecomposable \emph{finitely generated} modules over  a dyad of two discrete valuation rings (in particular, over $N$),  using  a completely different approach  of \emph{matrix problems}; see \cite{NazarovaRoiterDyad}.

For an algebraically closed field $\kk$, Assem and Skowro\'nski introduced in \cite{AssemSkowr} a certain class of \emph{finite--dimensional} analogues of the above algebra $C$
called \emph{gentle}. By definition, these are path algebras  $\kk\bigl[\stackrel{\rightarrow}{Q}\bigr]/\langle L\rangle$ for which the quiver $\stackrel{\rightarrow}{Q}$ and the admissible ideal $L$ satisfy the following conditions.
\begin{itemize}
\item For any vertex $i$ of  $\stackrel{\rightarrow}{Q}$, there exist at most two arrows starting at $i$ and at most two arrows ending at $i$.
\item The ideal $L$ is generated by a set of paths of lengths two.
\item Let $\xymatrix{\stackrel{i}\bul \ar[r]^a & \stackrel{j}\bul}$ be any arrow in $\stackrel{\rightarrow}{Q}$. Then the following is fulfilled:
\begin{itemize}
\item Whenever there are two arrows $b, c$ in $\stackrel{\rightarrow}{Q}$ ending at $i$ then \emph{precisely one} of the paths $\{ab, ac\}$ belongs to $L$.
\item Analogously, whenever there are two arrows $g, h$ in $\stackrel{\rightarrow}{Q}$ starting at $j$ then \emph{precisely one} of the paths $\{ga, ha\}$ belongs to $L$.
\end{itemize}
\end{itemize}
It turned out that gentle  algebras satisfy a number of remarkable properties. First of all, it was shown by  Wald and Waschb\"usch \cite{WaldWaschbuesch}
as well as by  Butler and Ringel \cite{ButlerRingel} that gentle algebras are either representation finite or representation tame. Moreover, in that articles was shown that there are two types of indecomposable modules over a gentle algebra: strings and bands. The combinatorial pattern describing both classes of indecomposables turned out to be essentially the same as in the case of the  algebras $C$ and $N$ studied by Gelfand and Ponomarev \cite{GP} as well as by Nazarova and Roiter \cite{NazarovaRoiterDyad}.

Another key observation established in the late 80s and in the 90s  was that the bounded derived category $D^b(A-\mathsf{mod})$ of a gentle algebra $A$ has remarkable properties, viewed both from the homological as well as from the representation--theoretical perspective.
For an arbitrary finite dimensional $\kk$--algebra $\Lambda$,  Happel constructed in \cite{Happel} a fully faithful functor
$
D^b(\Lambda-\mathsf{mod}) \stackrel{\mathsf{H}}\lar \widehat{\Lambda}-\underline{\mathsf{mod}},
$
where $\widehat{\Lambda}$ is the so--called \emph{repetitive algebra} of $\Lambda$ (which is known to be self--injective) and $\widehat{\Lambda}-\underline{\mathsf{mod}}$ is the stable category of the category of finite--dimensional
$\widehat{\Lambda}$--modules. Moreover,  Happel's functor $\mathsf{H}$ is an equivalence of triangulated categories provided $\mathsf{gl.dim}(A) < \infty$; see \cite{Happel}.

It was shown by Ringel \cite{Ringelrep} (see also \cite{Schroer}) that the repetitive algebra $\widehat{A}$ of a gentle algebra $A$
is \emph{special biserial}. According to \cite{WaldWaschbuesch, ButlerRingel}, the triangulated category $\widehat{\Lambda}-\underline{\mathsf{mod}}$
(and, as a consequence of Happel's embedding, the derived category $D^b(A-\mathsf{mod})$) is either representation--discrete or representation--tame. Happel's embedding $\mathsf{H}$ allowed to prove a number of remarkable properties about the derived category $D^b(A-\mathsf{mod})$.
 For instance, Schr\"oer and Zimmermann showed  in \cite{SchroerZimmermann} that the class of gentle algebras is closed under derived equivalences. As a consequence, any gentle algebra has only finitely many derived Morita partners. Next, Happel's embedding $\mathsf{H}$ provides a powerful tool to study the Auslander--Reiten quiver of the derived category $D^b(A-\mathsf{mod})$. However, despite  the fact that  the repetitive algebra $\widehat{A}$ can be described rather explicitly  as the path algebra of a certain infinite quiver with relations \cite{Ringelrep, Schroer} and tools to study indecomposable
$\widehat{A}$--modules are rather developed, it is a non--trivial problem to describe the preimage of an arbitrary indecomposable object of the stable category $\widehat{A}-\underline{\mathsf{mod}}$ in  the derived category  $D^b(A-\mathsf{mod})$ under the functor $\mathsf{H}$ in a constructive way.

In \cite{BekkertMerklen}, Bekkert and Merklen gave an explicit description of all indecomposable objects of the bounded homotopy category  $\mathsf{Hot}^b(A-\mathsf{pro})$ of projective $A$--modules. Their classification  was   based on a certain matrix problem studied by  Bondarenko in \cite{BondarenkoFirst}.  According to their result, there are two types of indecomposable objects of $\mathsf{Hot}^b(A-\mathsf{pro})$: string complexes
$S^\bu(\overline{v})$ as well as band complexes $B^\bu(\overline{w}, m, \pi)$ (the latter ones exist only in the tame case).
The obtained explicit combinatorics  of the indecomposable objects of $\mathsf{Hot}^b(A-\mathsf{pro})$ was essentially used in several   recent papers; see in particular  \cite{ALP, ALPP, CPS}.

In \cite{Vossieck} Vossieck classified  all finite dimensional $\kk$--algebras, whose derived category $D^b(A-\mathsf{mod})$ is representation--discrete. Quite remarkably, it turned out that those finite dimensional $\kk$--algebras, whose derived category is not \emph{representation--finite},  are necessarily gentle. Moreover, the corresponding quiver $\stackrel{\rightarrow}{Q}$  has precisely one oriented cycle. Vossieck's classification was finalized in \cite{BobinskiGeissSkowronski} by Bobinski, Gei\ss{} and Skowro\'nski. The derived--discrete  algebras were extensively studied by several authors afterwards;  see in particular \cite{BPP, ALP}.

In recent years, several important results on the bounded derived category of an arbitrary gentle algebra were established. In particular,
Avella--Alaminos and Gei\ss{} introduced in  \cite{AAGeiss} a new (and completely   combinatorial) derived invariant of a gentle algebra. In \cite{Kalck},
Kalck gave a full
description of  the singularity category $D^b(A-\mathsf{mod})/\mathsf{Hot}^b(A-\mathsf{mod})$ of an arbitrary gentle algebra $A$ of infinite global dimension.

Quite remarkably, gentle algebras appeared in the following striking new developments, relating the representation theory of finite dimensional algebras with other fields of mathematics.  In the works of Labardini--Fragoso \cite{Labardini}, Asssem, Br\"ustle, Charbonneau--Jodoin and Plamondon \cite{ABCP} and David--Roesler and Schiffler \cite{DRSchiffler} it was shown that many gentle algebras arise as so--called \emph{surface algebras} from special  triangulations of Riemann surfaces. Representation theoretic aspects of that constructions were  studied in particular in  \cite{GLFS} and \cite{AmiotGrimeland}. In a work of  Haiden, Katzarkov and Kontsevich \cite{HaidenKatzarkovKontsevich}, algebraic constructions related with gentle algebras appeared in the context of Fukaya categories of Riemann surfaces. In a recent work of  Lekili and Polishchuk \cite{LekiliPolishchuk},
some gentle algebras appeared in the context of the homological mirror symmetry for Riemann surfaces. Finally, we want to  mention  a work of Cecotti \cite{Cecotti}, in which
gentle algebras arise  in the context of certain  gauge theory models of  theoretical physics.

Around the beginning of the millennium, the authors of this paper suggested a new method to reduce the problem of classification of the indecomposable objects  of the derived categories  of coherent sheaves  on tame singular projective curves  (chains and cycles of projective lines) \cite{Thesis, Burban, Duke} as well of the derived categories  of
various classes of finite--dimensional $\kk$--algebras \cite{BD02, Thesis, Nodal, Toronto} to a certain class of tame matrix problems called \emph{representations of bunches of (semi--)chains} \cite{NazarovaRoiter, BondarenkoMain}. The proposed method confirmed an observation made earlier in \cite{DGVB} that tame singular projective curves must be related with appropriate  gentle algebras. In \cite{Burban}, the first--named author noticed that the derived category of coherent sheaves on a chain of projective lines admits a tilting vector bundle, whose endomorphism algebra is gentle.
The connection between gentle algebras and tame projective curves was made  more concrete in  \cite{bd}, in the context of non--commutative nodal curves. In that work, we in particular showed, that  appropriate  gentle algebras of global dimension two, which appeared in \cite[Appendix A4]{DGVB}, turn out to be  categorical resolutions of singularities of the bounded derived category of coherent sheaves on a cycle of projective lines.

In \cite{GeissDelaPena}, Gei\ss{} and de la Pe\~na introduced the notion of a \emph{skew--gentle} algebra. Such  algebras are known to be tame and even derived--tame \cite{GeissDelaPena, BekkertMarcosMerklen}. Certain classes of  skew--gentle algebras arise as surface algebras \cite{GLFS}, other classes of skew--gentle
algebras  appear in the context of non--commutative nodal curves
\cite{bd}. In what follows,  gentle algebras will be treated as  special cases    of skew--gentle algebras.

In this paper, we show the following results on  (skew--)gentle algebras. Firstly, we prove that the so--called
\emph{Rouquier dimension} $\mathsf{der.dim}(A)$ of the bounded derived
category of a (skew--)gentle algebra $A$  is  \emph{at most one}; see Corollary \ref{C:RouquierDimSkewGentle}. This is achieved using the following method. Let $H$ be the \emph{normalization} of $A$ (see Definition \ref{D:Sets}). It is a hereditary $\kk$--algebra (actually, it is Morita--equivalent
to a product of path algebras of quivers of type $\bul \lar \bul \lar  \dots \lar \bul \lar \bul$) such that $\mathsf{rad}(A) = \mathsf{rad}(H) =: I$. It turns out that the algebra
$B = \left(\begin{array}{cc} A & H \\I & H\end{array} \right)$ is again (skew--)gentle and has global dimension \emph{two}. Moreover,  the following results are true (see Theorem \ref{T:skewgentleResolution}).
\begin{itemize}
\item There exists a fully faithful exact functor $\mathsf{Hot}^b(A-\mathsf{pro}) \lar D^b(B-\mathsf{mod})$. In other words, $D^b(B-\mathsf{mod})$ is a categorical resolution of singularities of the algebra $A$.
\item The derived category $D^b(A-\mathsf{mod})$ is an appropriate  Verdier localization of the derived category $D^b(B-\mathsf{mod})$.
\item We have a semi--orthogonal decomposition
\begin{equation*}\label{E:semiorth}
D^b(B-\mathsf{mod}) = \bigl\langle D^b(\bar{A}-\mathsf{mod}), D^b(H-\mathsf{mod})\bigr\rangle.
\end{equation*}
\end{itemize}
From this  deduce  that $D^b(A-\mathsf{mod}) = \langle Z\rangle_2$, where $Z$ is the  direct sum of all pairwise non--isomorphic indecomposable $H$--modules. In other words, for any object $X^\bu$ of $D^b(A-\mathsf{mod})$, there exists an exact triangle
 $$
 X^\bu_1 \lar X^\bu \lar X^\bu_2 \lar X^\bu_1[1],
 $$
in which  the complexes $X_1^\bu$ and $X_2^\bu$ are appropriate direct sums of certain shifts of some direct summands of $Z$.
By definition, this implies that $\mathsf{der.dim}(A) \le 1$. The developed method is also applicable in some representation--wild cases. For example, we show that
$\mathsf{der.dim}\bigl(\kk[x_1, \dots, x_n]/(x_1, \dots, x_n)^2\bigr) = 1$ for any $n \in \NN$; see Example \ref{Ex:fatpointRouqDim}.

According to a result of Orlov \cite{Orlov}, the Rouquier dimension $\mathsf{der.dim}(X)$ of the derived category of coherent sheaves of any smooth projective curve $X$ is equal to one.
Combined with some results and methods of our previous work \cite{bd}, we deduce that $\mathsf{der.dim}(X) = 1$ for any tame singular projective curve. In Example \ref{Ex:RouqDimWeierCubic}, we  elaborate in detail the case of the nodal Weierstra\ss{} cubic $E = \overline{V(y^2 - x^3 - x^2)} \subset \PP^2$. Namely, if  $s \in E$ is the singular point, $\PP^1 \stackrel{\nu}\lar E$ a normalization map and $\widetilde\kO(n) := \nu_*\bigl(\kO_{\PP^1}(n)\bigr)$ for $n \in \ZZ$, then we have:
$
D^b\bigl(\Coh(E)\bigr) = \langle \kk_s \oplus \widetilde\kO \oplus \widetilde\kO(1)\rangle_2,
$
what implies that  $\mathsf{der.dim}(E) = 1$.

Our next goal was to revise the classification of indecomposable objects of the derived categories (bounded and unbounded)  of a gentle algebra $A$. In fact, this part of our paper is an update of our earlier works \cite{BD02, Nodal, Toronto}. In particular, in \cite{Nodal} we proved that the so--called \emph{nodal orders}
are derived--tamed (these class of rings forms a natural non--commutative generalization of the algebra $N = \kk\llbracket u, v\rrbracket/(uv)$ in the class
of orders. In fact, nodal orders  are infinite--dimensional analogues of gentle and skew--gentle algebras). In \cite{BD02, Toronto} it was shown that the proposed method allows to establish derived tameness of various types of finite--dimensional algebras.
In Section \ref{S:GentleCombinatorics}, we give a new proof of the classification of the indecomposable objects of the homotopy category $\mathsf{Hot}^b(A-\mathsf{mod})$, obtained by Bekkert and Merklen in \cite{BekkertMerklen} and generalize it  on the unbounded homotopy categories. Our method can be briefly described   as follows. For any $\ast \in \{b, +, -, \emptyset\}$, we construct
a pair of functors  $\EE$ and $\MM$:
\begin{equation*}
\mathsf{Hot}^\ast(A-\mathsf{pro}) \stackrel{\EE}\lar \Tri^\ast(A) \stackrel{\MM}\lar \Rep^\ast(\dX),
\end{equation*}
both reflecting  the isomorphism classes and indecomposability of objects. Here, $\Tri^\ast(A)$ is the so--called \emph{category of triples} (see
Definition \ref{D:triples}) and $\Rep^\ast(\dX)$ is the category of locally finite dimensional (but possibly infinite dimensional) representations
 of an appropriate \emph{bunch of chains} $\dX$ (see Theorem \ref{T:BurbanDrozdTriples}). From a description of the essential image of the composition $\MM \circ \EE$ and a classification of the indecomposable objects of $\Rep^\ast(\dX)$, we deduce a classification of the indecomposable objects of the homotopy category $\mathsf{Hot}^\ast(A-\mathsf{pro})$; see Theorem \ref{T:IndecGentle} and Theorem \ref{T:UnboundedIndecom}.

There were  several reasons, which motivated us to return to this old subject. Firstly, the reduction method of the work  \cite{BekkertMerklen} is not applicable to the unbouded homotopy categories $\mathsf{Hot}^\ast(A-\mathsf{pro})$ for
$\ast \in \{+, -, \emptyset\}$, whereas within our approach,  the unboundedness leads only to some  minor technical complications.
In the case $\mathsf{gl.dim}(A) = \infty$,  we elaborate  in detail
an explicit description  of those indecomposable objects of the derived category $D^b(A-\mathsf{mod})$, which have  unbounded minimal projective resolutions; see Theorem \ref{T:boundedInfGlobDim}.  Such objects are certain infinite string complexes $S^\bu(\overline{u})$ (in particular, they have discrete combinatorics). The obtained classification  gives a new insight on Kalck's description of the singularity category of $A$;  see \cite{Kalck}. Another reason concerned the key technical ingredient of the work \cite{BekkertMerklen}. Bekkert and Merklen deduce their classification from a certain tame matrix problem studied by Bondarenko in \cite{BondarenkoFirst}. It was shown
in \cite[Section 1]{BondarenkoFirst} (and then by different means  in \cite[Theorem 3]{BondarenkoDrozd}) that the  matrix problem in question  is tame since it can be reduced to another tame matrix problem, studied by Nazarova and Roiter \cite{NazarovaRoiter}. However, the precise  combinatorics of the indecomposable objects of the original matrix problem \cite{BondarenkoFirst} has never been elaborated. Summing up, the
classification of indecomposable objects of $\mathsf{Hot}^b(A-\mathsf{pro})$ obtained
in \cite{BekkertMerklen}, was heavily based on a  matrix problem \cite{BondarenkoFirst}, whose detailed treatment  was rather badly documented in the literature. We also think that
it is both advantageous and instructive that the problem of description of the  indecomposable objects of representation--tame categories arising from algebraic geometry \cite{DGVB, Duke,SurveyBBDG}, commutative
algebra \cite{BurbanDrozdMemoirs, BurbanGnedin} and representation theory \cite{BD02, Nodal, Toronto} can be solved  within essentially the same reduction scheme.

Another goal of this work is to demonstrate the unity of the homological and represen\-ta\-tion--theoretical methods of study of the derived categories of a (skew--)gentle algebra. Based on the technique of semi--orthogonal decompositions, we propose in Section \ref{S:MPSemiorthDecomp} another reduction of the classification of the isomorphism classes of  objects of
$D^b(B-\mathsf{mod})$ to the category of representations   of an appropriate bunch of (semi--)chains.

\smallskip
\noindent
\emph{Acknowledgement}. The work of the first--named author was partially supported by the DFG project Bu--1866/4--1.
\section{Generalities on gentle and skew--gentle algebras}\label{S:Generalities}

\noindent
In this article, let $\kk$ be an algebraically closed field. All  algebras below are finite dimensional associative algebras over $\kk$. If not explicitly mentioned, a module is meant to be a  left module, finite dimensional over the filed $\kk$.
\begin{definition}\label{D:skewgentleprep}
For any $m \in \NN_{\ge 2}$, let $T_m$ be the algebra of all lower--triangular square matrices of size $m$:
\begin{equation}
T_m :=
\left\{\left.
\left(
\begin{array}{cccc}
\alpha_{11} & 0 & \dots & 0 \\
\alpha_{21} & \alpha_{22} &  \dots  & 0 \\
\vdots & \vdots& \ddots & \vdots \\
\alpha_{m1} & \alpha_{m2} &  \dots  & \alpha_{mm}
\end{array}
\right)\right|  \alpha_{ij} \in \kk
\right\} \cong \kk\bigl[\vec{A}_m\bigr],
\end{equation}
where $\vec{A}_m$ is the quiver  $\xymatrix{\stackrel{1}\bul \ar[r] & \stackrel{2}\bul \ar[r] & \dots \ar[r]& \stackrel{m}\bul}$.
Next,
let $\Sigma \subseteq
\{1, \dots, m\}$ be any subset and $\widetilde{m} := m + |\Sigma|$. Then we get the  algebra $T_{m, \Sigma} \subset \Mat_{\widetilde{m}}(\kk)$ obtained from $T_m$ by the following ``blowing up procedure'': for any $j \in \Sigma$, we replace the $j$--th row (respectively, the $j$--th column) of each element of
$T_{m}$ by two rows (respectively, columns) of the same shape. In particular, for any $j \in \Sigma$, the square block $(j, j)$ is a $(2\times 2)$ matrix:
\begin{equation}\label{E:2times2block}
\alpha_{jj} = \left(
\begin{array}{cc}
\alpha_{jj}^{11} & \alpha_{jj}^{12} \\
\alpha_{jj}^{21} & \alpha_{jj}^{22}
\end{array}
\right).
\end{equation}
\end{definition}

\noindent
It is clear that $T_{m, \emptyset} = T_m$ and that algebras $T_{m, \Sigma}$ and $T_m$ are Morita--equivalent.
\begin{example}
Let $m = 3$ and $\Sigma = \{1, 3\}$. Then $T_{3, \Sigma}$ is the algebra of matrices of size $(5 \times 5)$, having the following form:
$$
T_{m, \Sigma} =
\left\{
\left(
\left.
\begin{array}{ccccc}
\ast & \ast & 0 & 0 & 0 \\
\ast & \ast & 0 & 0 & 0 \\
\ast & \ast & \ast & 0 & 0 \\
\ast & \ast & \ast & \ast & \ast \\
\ast & \ast & \ast & \ast & \ast \\
\end{array}
\right)\right| \; \mbox{where\;}\ast \;  \mbox{ist an arbitrary element of the field\;} \kk
\right\}.
$$
\end{example}

\begin{definition}\label{D:skewgentle} Consider the following datum $(\vec{\sm}, \simeq)$, where
\begin{itemize}
\item $\vec{\sm} = \bigl(m_1, \dots, m_t\bigr) \in \NN^t_{\ge 2}$ for some $t \in \NN$.
\item $\simeq$ is a \emph{symmetric} but not necessarily \emph{reflexive} relation on the set
\begin{equation}\label{E:SetOmega}
\Omega = \Omega(\vec{\sm}):= \bigl\{(i, j)\,\big|\, 1 \le i \le t, 1 \le j \le m_i \bigr\}
\end{equation}
such that for any $\gamma \in \Omega$, there exists \emph{at most one} $\delta \in \Omega$ such that $\gamma \simeq \delta$.
\end{itemize}
The corresponding  \emph{skew--gentle} algebra $A = A(\vec{\sm}, \simeq)$ is defined as follows.
\begin{itemize}
\item For any $1 \le i \le t$, we consider the following set
$$
\Sigma_i := \bigl\{1 \le j \le m_i \, \big| \, (i, j) \simeq (i, j)\bigr\}.
$$
and  denote:  $H_i := T_{m_i, \Sigma_i}$. Next, we denote:
\begin{equation}\label{E:Normalization}
H:= H(\vec{\sm}, \simeq):= H_1 \times \dots \times H_t.
\end{equation}
\item Finally, we put:
\begin{equation*}
H \supseteq A :=
\left\{
\bigl(X(1), \dots, X(t)\bigr) \,\left|\,
\begin{array}{cl}
X(i)_{jj} = X(k)_{ll} & \; \mbox{if}\;  (i,j)\simeq (k, l) \; \mbox{and}\;  (i, j) \ne (k,l) \\
X(i)_{jj}^{12} = 0 = X(i)_{jj}^{21} & \;\mbox{if} \; (i,j)\simeq (i,j)
\end{array}
\right.
\right\}.
\end{equation*}
\end{itemize}
The algebra  $A(\vec{\sm}, \simeq)$ is called \emph{gentle} if $\gamma \not\simeq \gamma$ for all $\gamma \in \Omega$. So, within  our definition, gentle algebras are special cases of skew--gentle algebras.
\end{definition}

\begin{example}
Let $A = \kk[\varepsilon]/(\varepsilon^2)$. Then $A$ is gentle, where the corresponding datum  $(\vec{\sm}, \simeq)$ is the following:
$\vec{\sm} = (2)$ and $(1,1) \simeq (1, 2)$. In the matrix notation, we have:
$$
A \cong  \left\{
\left(\left.
\begin{array}{cc}
\alpha_{11} & 0 \\
\alpha_{21} & \alpha_{22}
\end{array}
\right) \right| \alpha_{11} = \alpha_{22}
\right\}.
$$
\end{example}

\begin{example}\label{Ex:CatO} Let $A$ be the path algebra of the following quiver with relations:
\begin{equation}
\xymatrix{
\stackrel{1}\bul   \ar@/^/[r]^{a}   & \stackrel{2}\bul \ar@/^/[l]^{b}
} \quad ab = 0.
\end{equation}
Then $A$ is a gentle algebra attached to the datum $(\vec{\sm}, \simeq)$, where
$\vec{\sm} = (3)$ and $(1,1) \simeq (1, 3)$. In the matrix notation, we have:
$$
A \cong  \left\{
\left(\left.
\begin{array}{ccc}
\alpha_{11} & 0  & 0\\
\alpha_{21} & \alpha_{22} & 0 \\
\alpha_{31} & \alpha_{32} & \alpha_{33}
\end{array}
\right) \right| \alpha_{11} = \alpha_{33}
\right\}.
$$
The category $A-\mathsf{mod}$ is equivalent to the principal  block of the category $\mathcal{O}$ of  the Lie algebra $\mathfrak{sl}_2(\CC)$; see for instance \cite[Section 3.12]{Humphreys}.
\end{example}

\begin{example}\label{Ex:MyFavorite} Let $A$ be the path algebra of the following quiver with relations:
\begin{equation}
\xymatrix{
\stackrel{1}\bul   \ar@/^/[r]^{a} \ar@/_/[r]_{c}  & \stackrel{2}\bul \ar@/^/[r]^{b} \ar@/_/[r]_{d} & \stackrel{3}\bul
} \quad bc =  da = 0.
\end{equation}
Then $A$ is the  gentle algebra attached to the datum $\vec{\sm} = (3, 3)$ with the relation $(1, j) \simeq (2, j)$ for all $1 \le j \le 3$. In the matrix realization,
$$
A \cong
\left\{\left.
\left(
\left(\begin{array}{ccc}
\alpha_{11} & 0  & 0\\
\alpha_{21} & \alpha_{22} & 0 \\
\alpha_{31} & \alpha_{32} & \alpha_{33}
\end{array}
\right),
\left(\begin{array}{ccc}
\beta_{11} & 0  & 0\\
\beta_{21} & \beta_{22} & 0 \\
\beta_{31} & \beta_{32} & \beta_{33}
\end{array}
\right)
\right)
 \right| \alpha_{jj} = \beta_{jj} \; \mbox{\rm for all}\; 1 \le j \le 3
\right\}.
$$
For nodal cubic
curve $E = \overline{V(y^2-x^3-x^2)} \subset \PP^2$, we have an exact and fully faithful functor
$
\Perf(E) \lar D^b(A-\mathsf{mod}),
$
where $\Perf(E)$ is the perfect derived category of coherent sheaves on $E$;
see \cite[Section 7]{bd}. In other words, the derived category
 $D^b(A-\mathsf{mod})$ is a categorical resolution of singularities of $E$.
\end{example}

\begin{example} For any $n \in \NN$, let $A$ be the path algebra of the following quiver
\begin{equation*}
\xymatrix{
& & & \bul \ar@/_3.6ex/[llld]_-{c} \ar@/^3.6ex/[rrrd]^-{d} & & & \\
\bul   \ar@/^/[r]^{a_1}   &  \bul \ar@/^/[l]^{b_1} \ar@/^/[r]^{a_2}   &  \bul \ar@/^/[l]^{b_2} & \dots & \bul   \ar@/^/[r]^{a_{n-1}}   &  \bul \ar@/^/[l]^{b_{n-1}} \ar@/^/[r]^{a_n}   &  \bul \ar@/^/[l]^{b_n}
}
\end{equation*}
subject to the following set of relations:  $a_i b_i = 0$ and  $b_i a_i = 0$ for all $1 \le i \le n$. Then $A$ is the gentle algebra attached to the datum $\vec{\sm} = (n+2, n+2)$ with the relation given by the rule: $(1, 1) \simeq (2, 1)$ and $(1, j) \simeq (2, n+4-j)$ for all $2 \le j \le n+2$. According to \cite[Theorem 2.1]{Burban}, the derived category $D^b(A-\mathsf{mod})$ is equivalent to $D^b\bigl(\Coh(X)\bigr)$, where $X$ is a chain on $n+1$ projective lines.
\end{example}

\begin{example} For $n \in \NN_{\ge 2}$, let $S^{n} := \bigl\{\vec{x} \in \mathbb{R}^{n+1} \,\big|\, \|\vec{x}\| = 1\bigr\}$ be  a real sphere of dimension $n$. We denote:  $S^{n}_\pm := \bigl\{\vec{x} \in \mathbb{R}^{n+1} \,\big|\,  \pm x_{n+1} > 0\bigr\}$ and  $S^{n}_0 := \bigl\{\vec{x} \in  S^{n} \,\big|\,  x_{n+1} = 0\bigr\} \cong S^{n-1}$.
This gives a  stratification $S^{n} = S^{n}_+ \sqcup S^{n}_- \sqcup  S^{n}_0$, which can be inductively extended to a stratification
$S^{n} = \sqcup_{i=0}^n S^{i}_\pm
$
with contractible strata.
For example, for $n = 2$ we obtain:
\newcommand{\sfrm}[3]{
\node[draw,solid, thick, fit=(#1-1-1)(#1-#2-#3), inner sep=0pt]{};}
\begin{center}
\begin{tikzpicture}[scale=0.40,
    thick,
    dot/.style={fill=blue!10,circle,draw, inner sep=1pt, minimum size=4pt}]
\draw (0,0) node[dot](p1){} (6,0) node[dot](p2){};
\draw (3,0) circle (3cm);
\draw[dashed] (3,0) circle [ x radius=3cm, y radius=1cm];

\draw (0,0) arc (180:360:3 and 1);

\node at (3,3.6){$S_+$};
\node at (3,-3.6){$S_-$};
\node at (4,-1.7){$l_-$};
\node at (2, 1.4){$l_+$};
\node at (-1, 0){$p_+$};
\node at (7.0, 0){$p_-$};
\end{tikzpicture}
\end{center}
According to results from \cite[Chapter 8]{KashiwaraSchapira} (see in particular \cite[Theorem 8.1.10 and Exercise 8.1]{KashiwaraSchapira}), the corresponding derived category of constructible sheaves on $S^n$ is equivalent to the derived category of representations of the skew--gentle algebra $A(\vec{\sm}, \simeq)$, corresponding to the datum
$\vec{\sm} = (n+1)$ equipped with the  relation  $(1, j) \simeq (1, j)$ for all $1 \le j \le n+1$. For example, for  $n =2$, we get the following algebra:
$$
A = A(\vec{\sm}, \simeq)=
\left\{
\left(
\left.
\begin{array}{cc|cc|cc}
\ast & 0 & 0 & 0 & 0 & 0\\
0 & \ast & 0 & 0 & 0 & 0 \\
\hline
\ast & \ast & \ast & 0  & 0 & 0 \\
\ast & \ast & 0 & \ast & 0 & 0\\
\hline
\ast & \ast & \ast & \ast & \ast & 0 \\
\ast & \ast & \ast & \ast & 0 & \ast \\
\end{array}
\right)\right| \; \mbox{where\;}\ast \;  \mbox{ist an arbitrary element of\;} \kk
\right\}.
$$
Alternatively, $A$  is the path algebra of the quiver
$$
\xymatrix{
\stackrel{0_+}\bul   \ar[r] \ar[rd]  & \stackrel{1_+}\bul \ar[r] \ar[rd] & \stackrel{2_+}\bul \\
\stackrel{0_-}\bul   \ar[r] \ar[ru]  & \stackrel{1_-}\bul \ar[r] \ar[ru] & \stackrel{2_-}\bul
}
$$
subject to the following set of relations: any two paths with the same source and target are equal.
\end{example}

\begin{example} Let $\vec{\sm} = (3, 3)$ and $(1, 1) \simeq (2, 1), (1, 3) \simeq (2, 3), (1, 2) \simeq (1, 2), (2, 2) \simeq (2, 2)$.
Then the corresponding  skew--gentle algebra $A(\vec{\sm}, \simeq)$  is isomorphic to  the path algebra of the quiver
$$
\xymatrix
{
        &           & \bul \ar[lld]_{a_1}
\ar[ld]^{a_2}  \ar[rd]_{a_3}  \ar[rrd]^{a_4}
      &         &       \\
\bul \ar[rrd]_{b_1}  & \bul \ar[rd]^{b_2}
&        & \bul \ar[ld]_{b_3}
& \bul \ar[lld]^{b_4}\\
        &           &\bul &         &       \\
}
$$
subject to the  relations $b_1 a_1 = b_2 a_2$ and  $b_3 a_3 = b_4 a_4$. This algebra
is a degeneration of  a  family of  canonical tubular algebras of type $(2,2,2,2)$,
introduced by Ringel \cite{Ringel}. Moreover, $A(\vec{\sm}, \simeq)$   is derived--equivalent to the category of coherent sheaves on a certain non--commutative nodal curve; see
\cite[Section 8.3]{bd} for details.
\end{example}

\begin{remark} Of course, our definition of gentle and skew--gentle algebras coincides with the original ones \cite{AssemSkowr, GeissDelaPena}.
 For example, let $A= \kk\bigl[\stackrel{\rightarrow}{Q}\bigr]/\langle L\rangle$ a gentle algebra in the sense of the definition from Introduction of this paper. It is easy to see that any arrow of $\stackrel{\rightarrow}{Q}$ belongs to a uniquely determined maximal non--zero path $\varpi$ in $A$. Let $\varpi_1, \dots, \varpi_t$ be the set  of the maximal paths and $m_i$ be the length of  $\varpi_i$ for $1 \le i \le t$. For any $1 \le j \le m_i$, let $(i, j)$ be the $j$--th vertex of $\varpi_i$.
Then we put:
\begin{itemize}
\item $\vec{\sm} = (m_1+1, \dots, m_t+1)$,
\item $(i, j) \simeq (k, l)$ if and only if $(i, j) \ne (k, l)$, but both correspond to the same point of the quiver $\stackrel{\rightarrow}{Q}$. \qed
\end{itemize}
\end{remark}

\begin{definition}\label{D:Sets}
Let $\Omega = \Omega(\vec{\sm})$ be the set defined  by (\ref{E:SetOmega}). We introduce the following  new sets $\overline{\Omega}, \widetilde{\Omega}$ and
$\widehat{\Omega}$.

\smallskip
\noindent
1.~The set $\overline{\Omega}$ is obtained from $\Omega$ by replacing each  element $(i, j) \in \Omega$ such that $(i, j) \simeq (i, j)$ by two new elements
$((i, j), +)$ and $((i, j), -)$, which are now no longer self--equivalent.

\smallskip
\noindent
2.~The set $\widetilde\Omega$ is obtained from $\overline{\Omega}$ by replacing any pair $(i, j) \ne (k, l)$ such that $(i, j) \simeq (k, l)$, by a single  element $\overline{(i, j)} = \overline{(k, l)}$. In other words, the elements of $\widetilde\Omega$ are of the following three types:
\begin{itemize}
\item elements of the \emph{first type}: $\bigl\{(i, j), (k, l)\bigr\} = \overline{(i, j)} = \overline{(k, l)}$, where $(i, j) \simeq (k, l)$ and
$(i, j) \ne (k, l)$ in $\Omega$.
\item elements of the \emph{second type}: $\bigl((i, j),\pm)$, where $(i, j) \in \Omega$ is such that $(i, j) \simeq (k, l)$.
\item elements of the \emph{third type}: $(i, j)$, where $(i, j) \not\simeq (k, l)$ for any $(k, l) \in \Omega$.
\end{itemize}

\smallskip
\noindent
3.~The set $\widehat\Omega := \Omega/\simeq$ is the set of the equivalence classes of elements of $\Omega$.
\end{definition}

\smallskip
\noindent
We call the hereditary algebra $H = H(\vec{\sm}, \simeq)$  the  \emph{normalization} of the skew--gentle algebra $A = A(\vec{\sm}, \simeq)$.

\begin{lemma}
Let $A = A(\vec{\sm}, \simeq)$ be a skew--gentle algebra, $H$ be its normalization, $I$ be the radical of $H$.  For any $\gamma \in \widehat\Omega$ we put:
$$
\bar{H}_\gamma :=
\left\{
\begin{array}{cl}
\kk \times \kk & \mbox{\; \rm if \;} \gamma = \{(i, j), (k, l)\} \mbox{\; \rm and \;} (i, j) \ne (k, l) \\
\Mat_2(\kk) & \mbox{\; \rm if \;} \gamma = \{(i, j)\} \mbox{\; \rm and \;} (i, j) \simeq (i, j)\\
\kk & \mbox{\; \rm if \;} \gamma = \{(i, j)\} \mbox{\; \rm and \;} (i, j) \not\simeq (i, j)\\
\end{array}
\right.
$$
and
$$
\bar{A}_\gamma :=
\left\{
\begin{array}{cl}
\kk  & \mbox{\; \rm if \;} \gamma = \{(i, j), (k, l)\} \mbox{\; \rm and \;} (i, j) \ne (k, l) \\
\kk \times \kk & \mbox{\; \rm if \;} \gamma = \{(i, j)\} \mbox{\; \rm and \;} (i, j) \simeq (i, j)\\
\kk & \mbox{\; \rm if \;} \gamma = \{(i, j)\} \mbox{\; \rm and \;} (i, j) \not\simeq (i, j).\\
\end{array}
\right.
$$
Then the following results are true.
\begin{itemize}
\item $I$ is also the radical of $A$.
\item Let $\bar{A} := A/I$ and $\bar{H} := H/I$. Then we have a commutative diagram
\begin{equation}\label{E:gentle}
\begin{array}{c}
\xymatrix{
\bar{A} \ar[rr]^-{\cong} \ar@{_{(}->}[d] & & \prod\limits_{\gamma \in \widehat\Omega} \bar{A}_\gamma
\ar@{^{(}->}[d]^-{\prod\limits_{\gamma \in \widehat\Omega} \sigma_\gamma}\\
\bar{H} \ar[rr]^-{\cong}  & & \prod\limits_{\gamma \in \widehat\Omega} \bar{H}_\gamma
}
\end{array}
\end{equation}
where $\bar{A}_\gamma \stackrel{\sigma_\gamma}\lar \bar{H}_\gamma$ denotes the natural inclusion.
\end{itemize}
\end{lemma}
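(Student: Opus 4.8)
The plan is to reduce everything to the block description of $H$ as a product of blow--ups of lower--triangular matrix algebras and to the fact that $A \subseteq H$ is cut out purely by conditions on the diagonal blocks. First I would record the radicals of the building blocks. For $T_m = \kk[\vec{A}_m]$ the Jacobson radical is the subspace of strictly lower--triangular matrices, with $T_m/\mathsf{rad}(T_m) \cong \kk^m$; performing the blow--up, $\mathsf{rad}(H_i)$ is the subspace of those matrices all of whose diagonal blocks vanish, and $H_i/\mathsf{rad}(H_i) \cong \prod_{j \notin \Sigma_i}\kk \times \prod_{j \in \Sigma_i}\Mat_2(\kk)$. Consequently $I = \mathsf{rad}(H) = \mathsf{rad}(H_1) \times \dots \times \mathsf{rad}(H_t)$ and
$$
H/I \;\cong\; \prod_{(i,j)\in\Omega} R_{(i,j)}, \qquad \text{where } R_{(i,j)} = \Mat_2(\kk) \text{ if } (i,j)\simeq(i,j), \text{ and } R_{(i,j)} = \kk \text{ otherwise.}
$$

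Next I would prove the first assertion. Since the relations defining $A$ inside $H$ --- namely $X(i)_{jj} = X(k)_{ll}$ for $(i,j) \simeq (k,l) \ne (i,j)$, and $X(i)_{jj}^{12} = 0 = X(i)_{jj}^{21}$ for $(i,j) \simeq (i,j)$ --- involve only the diagonal blocks, every element of $I$ (whose diagonal blocks all vanish) automatically satisfies them; hence $I \subseteq A$, and being a two--sided ideal of $H$ it is a fortiori a two--sided ideal of $A$. As $I$ is nilpotent, $I \subseteq \mathsf{rad}(A)$. Conversely, restricting the projection $H \twoheadrightarrow H/I$ to $A$ gives a map with kernel $A \cap I = I$; reading off which diagonal blocks are allowed by the above relations, its image is $\prod_{\gamma \in \widehat\Omega}\bar{A}_\gamma$ --- a product of copies of $\kk$ and $\kk \times \kk$, hence a commutative semisimple $\kk$--algebra. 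Therefore $\mathsf{rad}(A) \subseteq I$, so $\mathsf{rad}(A) = I$, and at the same time $\bar{A} = A/I \cong \prod_\gamma \bar{A}_\gamma$, while $\bar{H} = H/I \cong \prod_\gamma \bar{H}_\gamma$ by the block computation above.

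Finally, to obtain the commutative square I would group the factors $R_{(i,j)}$ of $H/I$ by the classes $\gamma \in \widehat\Omega$. A class $\gamma = \{(i,j),(k,l)\}$ of the first type contributes $\kk \times \kk = \bar{H}_\gamma$, and the relation $X(i)_{jj} = X(k)_{ll}$ carves out the diagonal $\kk \hookrightarrow \kk \times \kk$; a class $\gamma = \{(i,j)\}$ with $(i,j) \simeq (i,j)$ contributes $\Mat_2(\kk) = \bar{H}_\gamma$, and the relations $X(i)_{jj}^{12} = 0 = X(i)_{jj}^{21}$ carve out the diagonal matrices $\kk \times \kk \hookrightarrow \Mat_2(\kk)$; a class $\gamma = \{(i,j)\}$ with $(i,j) \not\simeq (i,j)$ contributes $\kk = \bar{H}_\gamma = \bar{A}_\gamma$ with $\sigma_\gamma = \id$. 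In every case the map induced by $A \hookrightarrow H$ on the $\gamma$--component is exactly the stated natural inclusion $\sigma_\gamma$, so the square commutes by construction. The one point deserving genuine care --- and the main (mild) obstacle --- is the explicit identification of $\mathsf{rad}(T_{m,\Sigma})$ with the strictly--below--the--diagonal--blocks subspace, together with the bookkeeping of how the blow--up at the indices of $\Sigma_i$ matches the self--equivalent elements $(i,j) \simeq (i,j)$; once this block picture is set up correctly, the rest is a direct inspection.
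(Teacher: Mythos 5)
Your proposal is correct and follows essentially the same route as the paper: identify $I$ explicitly as the strictly--below--the--diagonal--blocks subspace, observe that it is a nilpotent two--sided ideal of $A$ with semisimple quotient (hence equals $\mathsf{rad}(A)$ by the standard criterion), and obtain the commutative square by a direct block computation grouping the factors of $H/I$ by the classes $\gamma \in \widehat\Omega$. The paper compresses all of this into ``a straightforward computation,'' so your write--up is simply a fuller version of the same argument.
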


\begin{proof} A straightforward computation shows that
$$
I = \Bigl\{\bigl(X(1), \dots, X(t)\bigr) \in H \; \big| \; \bigl(X(i)\bigr)_{k,l} = 0 \; \mbox{\rm for all} \; 1 \le i \le t  \; \mbox{\rm and}\;
 1 \le l < k \le m_i\Bigr\}.
$$
Note  that $I$ is a two-sided ideal in $A$, $I^m = 0$ for $m := \textrm{max}\{m_1, \dots, m_t\}$ and the quotient algebra
$A/I$ is semi--simple. Therefore, $I$ is the radical of $A$, as asserted (see for instance \cite[Proposition 3.1.13]{dk}). The statement about the commutative diagram (\ref{E:gentle}) follows from a straightforward computation.
\end{proof}

\begin{remark}\label{R:skewgentleviagluing} Let $A$ be a skew--gentle algebra and $H$ be its normalization. Then
\begin{equation}\label{E:pullbackgentle}
\begin{array}{c}
\xymatrix{
A \ar[rr] \ar@{_{(}->}[d] & &  \bar{A}
\ar@{^{(}->}[d]\\
H \ar[rr]  & &  \bar{H}
}
\end{array}
\end{equation}
is a pull--back diagram in the category of $\kk$--algebras. Moreover, identifying  the embedding $\bar{A} \lar \bar{H}$ with the
map $\prod\limits_{\gamma \in \widehat\Omega} \sigma_\gamma$ from the diagram (\ref{E:gentle}), we can actually \emph{define} a skew--gentle algebra
$A$ attached to the datum $(\vec{\sm}, \simeq)$ as the pull--back of the pair of algebra homomorphisms $H \lar \bar{H} \longleftarrow \bar{A}$.
\end{remark}

\begin{remark} It is difficult to see that the sets $\overline{\Omega}$ and $\widetilde\Omega$ stand in bijections with  the sets of primitive idempotents of the algebras $H$ and $A$, respectively.
They will be used  upon Subsection \ref{SS:TriplesandMP}.
\end{remark}

\section{Homological properties of skew--gentle algebras and their applications}\label{S:ResultsSkewGentle}
Let $(\vec{\sm}, \simeq)$ be a datum as in Definition \ref{D:skewgentle}. In what follows, we shall assume that there exist $\gamma, \delta \in \Omega$ such that $\gamma \simeq \delta$. This implies that the skew--gentle algebra $A = A(\vec{\sm}, \simeq)$ is not hereditary.  One of the main objects of this paper is the following  algebra
\begin{equation}\label{E:algebraB}
B = B(\vec{\sm}, \simeq) :=
\left(
\begin{array}{cc}
A & H \\
I & H
\end{array}
\right).
\end{equation}
In this section, we study relations between the algebras $A$ and $B$.
\subsection{Recollement and skew--gentle algebras}
Let
$
e = \left(
\begin{array}{cc}
0 & 0 \\
0 & 1
\end{array}
\right) \in B$ and $
Q := B e =
\left(
\begin{array}{c}
H \\
H
\end{array}
\right).
$
Note that $\End_B(Q) \cong H^\circ$ is the opposite algebra of $H$ and $eBe = H$. Actually, we are in the setting of the so--called \emph{minors}
\cite{Drozd}. The rich theory relating the derived categories of the  algebras  $B$, $eBe$ and $B/(e)$ is due to Cline, Parshall and Scott \cite{ClineParshallScott}; see also
\cite{Minors} for some further elaborations.

\smallskip
\noindent
It is obvious that $Q$ is a free left $H^\circ$--module. Next, we have an adjoint pair
of functors
\begin{equation}\label{E:minorsadjfunct}
\xymatrix{B-\mathsf{mod} \ar@/^2ex/[rr]|{\,\sG\,} & & H-\mathsf{mod} \ar@/^2ex/[ll]|{\,\sF\,}
}
\end{equation}
 where
$\sG = \Hom_B(Q, \,-\,)$ and $\sF = Q \otimes_{H} \,-\,$. The functor $\sF$ is exact and has the following explicit description: if $X$ is a left $H$--module then
$$
\sF(X) =
\left(
\begin{array}{c}
X \\
X
\end{array}
\right) \cong X \oplus X,
$$
where for  $b = \left(\begin{array}{cc}
b_1 & b_2 \\
b_3 & b_4
\end{array}
\right) \in B$
  and   $x = \left(\begin{array}{c}
x_1 \\
x_2
\end{array}
\right) \in
\left(
\begin{array}{c}
X \\
X
\end{array}
\right),
$  the element $b \circ x$ is given by the matrix multiplication. Note also that the functor $\sH:= \Hom_{H}\bigl(eB, \,-\,\bigr): H-\mathsf{mod} \lar B-\mathsf{mod}$ is exact and right adjoint to the functor $\sG$.
Finally, we denote
$$
J :=
\left(
\begin{array}{cc}
I & H \\
I & H
\end{array}
\right) \quad \mbox{\rm and} \quad
T:= B/J \cong A/I =\bar{A}.
$$
\begin{theorem}\label{T:skewgentleResolution}
The following results are true.
\begin{enumerate}
\item The algebra $B = B(\vec{\sm}, \simeq)$ is again skew--gentle and its normalization is the algebra
\begin{equation}\label{E:normalization}
R:= R(H) =
\left(
\begin{array}{cc}
H & H \\
I & H
\end{array}
\right).
\end{equation}
Next, $L:= \mathsf{rad}(R) = \mathsf{rad}(B) = \left(
\begin{array}{cc}
I & H \\
I & I
\end{array}
\right)$. Moreover, if the original algebra $A$ is gentle then $B$ is gentle, too.
\item The derived functor $\sD\sF: D^b(H-\mathsf{mod}) \lar D^b(B-\mathsf{mod})$ of the (exact) functor $\sF$ is fully faithful. Next,
we have a semi--orthogonal decomposition
\begin{equation*}
D^b(B-\mathsf{mod}) = \bigl\langle D^b_T(B-\mathsf{mod}), \; \mathsf{Im}(\sD\sF)\bigr\rangle,
\end{equation*}
where $D^b_T(B-\mathsf{mod})$ denotes the full subcategory of $D^b(B-\mathsf{mod})$ consisting of those complexes, whose cohomologies are modules
over the semi--simple algebra $T$. Moreover, the exact functor
$D^b(\bar{A}-\mathsf{mod}) \lar D^b_T(B-\mathsf{mod}), \; \bar{A} \mapsto T$ is an equivalence of triangulated categories, i.e.~the above  semi--orthogonal decomposition of $D^b(B-\mathsf{mod})$ can be informally rewritten as
\begin{equation}\label{E:semiorthogonal}
D^b(B-\mathsf{mod}) = \bigl\langle D^b(\bar{A}-\mathsf{mod}), \; D^b(H-\mathsf{mod})\bigr\rangle.
\end{equation}
\item We have: $\mathsf{gl.dim}(B) = 2$.
\item Let $Z \in \mathrm{Ob}(H-\mathsf{mod})$ be such that $H-\mathsf{mod} = \add(Z)$ (for example, $Z$ is the direct sum of all pairwise non--isomorphic indecomposable $H$--modules) and $\widetilde{Z} := \sF(Z)$. Then  we have: $D^b(B-\mathsf{mod}) = \bigl\langle T \oplus \widetilde{Z}\bigr\rangle_2$ (here, we follow the  notation of Rouquier's work \cite{Rouquier}). In particular, the Rouquier dimension
    $\mathsf{der.dim}(B)$ of the derived category $D^b(B-\mathsf{mod})$ is at most one.
\end{enumerate}
\end{theorem}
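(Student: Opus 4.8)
The plan is to treat statement (2) as the homological core of the theorem, to deduce (4) and the finiteness half of (3) from it, and to establish (1) and the equality in (3) by direct computation. Throughout, set $e=\smtr{0&0\\0&1}\in B$, so that $Q=Be$, $eBe=H$, $BeB=J$ and $B/BeB=T=\bar A$; thus we are in the ``minor'' situation of \cite{Drozd,Minors}, and the recollement machinery of Cline--Parshall--Scott \cite{ClineParshallScott} applies through the adjoint triple $\sF\dashv\sG\dashv\sH$ of \eqref{E:minorsadjfunct}. The conceptual crux is to recognise $J=BeB$ as a \emph{stratifying ideal}.

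For (2): since $Q=Be$ is a projective left $B$-module, $\sG=\Hom_B(Q,-)$ is exact and is simply $X\mapsto eX$; as $e\,\sF(X)=X$ we get $\sG\sF\cong\id$, so the exact functor $\sD\sF$ is fully faithful, with exact right adjoint $R\sG=e(-)$ satisfying $R\sG\,\sD\sF\cong\id$. For $X^\bu\in D^b(B-\mathsf{mod})$ the counit triangle $\sD\sF\,R\sG(X^\bu)\lar X^\bu\lar C^\bu\lar\sD\sF\,R\sG(X^\bu)[1]$ has $e\,C^\bu\simeq 0$, hence the cohomologies of $C^\bu$ are $B/BeB=T$-modules, i.e.\ $C^\bu\in D^b_T(B-\mathsf{mod})$; and by adjunction $\Hom_{D^b(B)}(\sD\sF(Y^\bu),M^\bu)=\Hom_{D^b(H)}(Y^\bu,eM^\bu)=0$ for $M^\bu\in D^b_T(B-\mathsf{mod})$. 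This is precisely the semi-orthogonal decomposition $D^b(B-\mathsf{mod})=\langle D^b_T(B-\mathsf{mod}),\mathsf{Im}(\sD\sF)\rangle$. To identify $D^b_T(B-\mathsf{mod})$ with $D^b(\bar A-\mathsf{mod})$ I would verify that $J=BeB$ is a stratifying ideal: $Be$ is free as a right $H$-module, so $\Tor^{H}_{>0}(Be,eB)=0$, and the multiplication map $Be\otimes_H eB\lar BeB$ is an isomorphism (both sides have $\kk$-dimension $2(\dim_\kk I+\dim_\kk H)$ and the map is surjective). Since $eBe=H$ is hereditary, the resulting recollement descends to bounded derived categories \cite{ClineParshallScott,Minors}, and restriction of scalars along $B\twoheadrightarrow T$ then identifies $D^b(\bar A-\mathsf{mod})$ with $D^b_T(B-\mathsf{mod})$, sending $\bar A$ to $T$.

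For (1), the strategy is to use the pull-back characterisation of skew--gentle algebras (Remark \ref{R:skewgentleviagluing}): one checks that $B$ is the pull-back of $R\to\bar R\leftarrow\bar B$, where $R=\smtr{H&H\\I&H}$, $\bar B:=B/L$, $\bar R:=R/L$, and $L=\smtr{I&H\\I&I}$; that $L$ is nilpotent with semisimple quotient (so $L=\mathsf{rad}(B)=\mathsf{rad}(R)$); and that $R$ is hereditary and Morita equivalent to a product of algebras $T_{n,\Sigma}$. The last point is the main computation: writing $I_m:=\mathsf{rad}(T_m)$, reindexing the $2m$ rows and columns of $\smtr{T_m&T_m\\I_m&T_m}$ in the order $1',1,2',2,\dots,m',m$ exhibits it as $T_{2m}$ (and the blown-up summands as $T_{2m_i,\Sigma_i'}$ for suitable $\Sigma_i'$); matching the relation $\simeq$ of $B$ to a datum $(\vec{\sm}',\simeq')$ with $\vec{\sm}'=(2m_1,\dots,2m_t)$ is then bookkeeping, during which one sees that $\simeq'$ has a self-pair only if $\simeq$ does, so $A$ gentle implies $B$ gentle. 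For (3): a direct computation of the minimal projective resolutions of the simple $B$-modules (using $\mathsf{rad}(B)=L$ from (1) and the exactness of $\sF$; the simples $S$ with $eS\ne0$ turn out to have projective dimension $\le 1$, the rest $\le 2$) shows $\mathsf{gl.dim}(B)\le 2$, and the bound is attained because the standing assumption $\gamma\simeq\delta$ makes $A$, hence $B$ (its normalisation being $R\ne B$), non-hereditary.

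Statement (4) is then formal. Any $X^\bu\in D^b(B-\mathsf{mod})$ is, by (2), an extension of some $V^\bu\in D^b_T(B-\mathsf{mod})$ by some $W^\bu\in\mathsf{Im}(\sD\sF)$. Since $H$ is hereditary, every object of $D^b(H-\mathsf{mod})$ is the direct sum of shifts of its cohomology modules, each lying in $\add(Z)=\langle Z\rangle_1$; applying the exact, fully faithful functor $\sD\sF=\sF$ gives $W^\bu\in\langle\sF(Z)\rangle_1=\langle\widetilde Z\rangle_1$. Since $\bar A$ is semisimple, $D^b(\bar A-\mathsf{mod})=\langle\bar A\rangle_1$, so transporting along the equivalence of (2) gives $V^\bu\in\langle T\rangle_1$. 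Hence $X^\bu$ is an extension of an object of $\langle T\oplus\widetilde Z\rangle_1$ by an object of $\langle T\oplus\widetilde Z\rangle_1$, i.e.\ $X^\bu\in\langle T\oplus\widetilde Z\rangle_2$; thus $D^b(B-\mathsf{mod})=\langle T\oplus\widetilde Z\rangle_2$ and $\mathsf{der.dim}(B)\le 1$. I expect the main obstacle to be part (1): while the pull-back picture makes it conceptually transparent, matching the combinatorial datum of $B$ to that of a skew--gentle algebra — keeping precise track of which vertices get blown up and glued, so as to simultaneously confirm that $R$ is the normalisation and that the global dimension remains finite — requires careful bookkeeping, whereas the homological statements (2)--(4) are comparatively soft once (1) is in hand.
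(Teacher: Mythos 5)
Your proposal is correct and follows essentially the same route as the paper: part (1) via the pull--back/blow--up description together with the identification $R(T_m)\cong T_{2m}$ and the computation of $\mathsf{rad}$, part (2) via the minor/recollement formalism for the idempotent $e$ (you verify the Cline--Parshall--Scott stratifying--ideal conditions for $J=BeB$ directly, where the paper simply cites \cite{Minors}; the two are equivalent here since $J\cong eB\oplus eB$ as right $B$--modules), and part (4) exactly as in the text. One factual slip in your sketch of (3): the projective dimensions of the simples are distributed the other way around. The simples $S$ with $eS=0$ are the $S_\gamma$, $\gamma\in\widetilde\Omega$, and these are the ones of projective dimension at most one, via $0\to\widetilde{Q}_{(i,j+1)}\oplus\widetilde{Q}_{(k,l+1)}\to\widetilde{P}_\gamma\to S_\gamma\to0$; the simples with $eS\ne 0$, i.e.\ the tops of the $\widetilde{Q}_{(i,j)}$, are the ones that can reach projective dimension two --- already for $A=\kk[\varepsilon]/(\varepsilon^2)$ one computes $\mathsf{rad}\bigl(\widetilde{Q}_{(1,2)}\bigr)=\smtr{Q_{(1,2)}\\ 0}\cong S_\gamma$, which is not projective. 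This does not affect the bound $\mathsf{gl.dim}(B)\le 2$, but you should swap the two classes before relying on the finer statement anywhere else.
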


\begin{proof} (1) Assume first that $H = T_m$. Then  a straightforward computation shows that  $R(H) \cong T_{2m}$. Moreover, if
$H \cong  H_1 \times \dots \times H_t$ then $R(H) \cong R\bigl(H_1\bigr) \times \dots \times R\bigl(H_t\bigr)$. It is  ease to see  that
\begin{itemize}
\item $L := \left(
\begin{array}{cc}
I & H \\
I & I
\end{array}
\right)$ is a two--sided ideal in both algebras  $B$ and $R$,
\item the quotient algebra $R/L \cong \bar{H} \times \bar{H}$ is semi--simple with the same number  of simple factors as $R/\mathsf{rad}(R)$.
\end{itemize}
Therefore, $L = \mathsf{rad}(R)$. Next, the algebra $B/L \cong \bar{A} \times \bar{H}$ is semi--simple, too. Moreover, $L$ is a nilpotent ideal (both in $R$ and $B$). Indeed,
$L^{2m} = 0$, where $m := \mathsf{max}\{m_1, \dots, m_t\}$. Therefore, we have: $L = \mathsf{rad}(R)$; see for instance \cite[Proposition 3.1.13]{dk}.

\smallskip
\noindent
Next, we have a commutative diagram in the category of $\kk$--algebras:
\begin{equation}\label{E:pullbackgentle2}
\begin{array}{cc}
\xymatrix{
B \ar[rr] \ar@{_{(}->}[d] & &  \bar{B}
\ar@{^{(}->}[d] \ar[rr]^-\cong & & \bar{A} \times \bar{H}  \ar@{^{(}->}[d] \\
R \ar[rr]  & &  \bar{R} \ar[rr]^-\cong & & \bar{H} \times \bar{H}.
}
\end{array}
\end{equation}
It follows from Remark \ref{R:skewgentleviagluing} that the algebra $B$ can be obtained from the hereditary algebra $R$ by the process of gluing/blowing--up idempotents described in  Definition \ref{D:skewgentle}. Therefore, $B$ is skew--gentle, as asserted. Moreover, it is gentle if and only if $A$ is gentle.

\smallskip
\noindent
(2) Recall that $\sG: B-\mathsf{mod} \lar H-\mathsf{mod}$ is a localization functor, whose kernel is the category of $B/J$--modules, where
\begin{equation}\label{E:bimoduleJ}
J:= \mathsf{Im}\bigl(Be \otimes_{eBe} eB \xrightarrow{\mathsf{mult}} B\bigr) =
\left(\begin{array}{cc}
I & H \\
I & H
\end{array}
\right),
\end{equation}
and the unit of adjunction $\mathsf{Id}_{H-\mathsf{mod}} \stackrel{\eta}\lar \sG \circ \sF$ is an isomorphism of functors; see for instance \cite[Theorem 4.3]{Minors}. Since both functors
$\sG$ and $\sF$ are exact, the unit $\mathsf{Id}_{D^b(H-\mathsf{mod})} \stackrel{\eta}\lar \sD\sG \circ \sD\sF$  of the derived adjoint pair $(\sD\sF, \sD\sG)$ is an isomorphism of functors, too. Hence, the functor $\sD\sF$ is fully faithful; see for instance  \cite[Theorem 4.5]{Minors}.

\smallskip
\noindent
Let $ \sD\sF \circ \sD\sG  \stackrel{\xi}\lar \mathsf{Id}_{D^b(H-\mathsf{mod})}$ be the adjunction counit. Then for any object $X^{\bu}$ of
$D^b(B-\mathsf{mod})$, we have a distinguished triangle
\begin{equation}\label{E:disttriangle}
\sD\sF \circ \sD\sG(X^\bu) \xrightarrow{\xi_{X^\bu}} X^\bu \lar \mathsf{Cone}\bigl(\xi_{X^\bu}\bigr) \lar \sD\sF \circ \sD\sG(X^\bu)[1].
\end{equation}
Since the morphism $\sD\sG\bigl(\xi_{X^\bu}\bigr): \sD\sG \circ \sD\sF \circ \sD\sG(X^\bu) \lar \sD\sG(X^\bu)$ is an isomorphism, $\mathsf{Cone}\bigl(\xi_{X^\bu}\bigr)$
belongs to the kernel of $\sD\sG$, which is the triangulated category $D^b_T(B-\mathsf{mod})$.
For any object $Y^\bu$ of $D^b(H-\mathsf{mod})$ and $Z^\bu$ of $D^b_T(B-\mathsf{mod})$, we have:
$$
\Hom_{D^b(B)}\bigl(\sD\sF(Y^\bu), \, Z^\bu\bigr) \cong \Hom_{D^b(H)}\bigl(Y^\bu, \, \sD\sG(Z^\bu)\bigr) \cong 0.
$$
Therefore, we indeed have a semi--orthogonal decomposition (\ref{E:semiorthogonal}), as asserted.
Moreover, even a stronger property is true: we have a \emph{recollement diagram} (see \cite{BBD,ClineParshallScott}):
\begin{equation}\label{E:recollement}
\xymatrix{D^b_T(B-\mathsf{mod}) \ar[rr]|{\,\sI\,} && D^b(B-\mathsf{mod}) \ar@/^2ex/[ll] \ar@/_2ex/[ll] \ar[rr]|{\,\sD\sG\,}
  && D^b(H-\mathsf{mod}) \ar@/^2ex/[ll]^{\,\sD\sH\,} \ar@/_2ex/[ll]_{\,\sD\sF\,}}.
\end{equation}
Here, $\sI$ is the natural inclusion functor, admitting both left and right adjoint functors; see \cite{ClineParshallScott}. Since the $B$--bimodule $J$ is projective viewed as a right $B$--module (we have: $J \cong eB \oplus eB$), the natural exact functor given as the
composition
$$
D^b(\bar{A}-\mathsf{mod}) \stackrel{\cong}\lar D^b(B/J-\mathsf{mod}) \lar D^b_T(B-\mathsf{mod}), \quad \bar{A} \mapsto T
$$
is an equivalence of triangulated categories; see for instance \cite[Theorem 4.6]{Minors}.

\smallskip
\noindent
(3) Applying  \cite[Lemma 5.1]{Minors} to our setting, we obtain:  $\mathsf{gl.dim}(B) \le 2$. Since by our assumptions, the algebra $A$ is not hereditary, the
algebra $B$ is not hereditary, too. Therefore, we may conclude that $\mathsf{gl.dim}(B) = 2$.

\smallskip
\noindent
(4) Since the algebra $H$ is Morita equivalent to $T_{m_1} \times \dots  \times T_{m_t}$, it has finite (derived) representation type.
Let $Z$  be a representation generator of  $H-\mathsf{mod}$, $\widetilde{Z} := \sF(Z)$, $Y := T \oplus \widetilde{Z}$ and
$X^\bu$ be any object of $D^b\bigl(B-\mathsf{mod}\bigr)$. According to the exact triangle (\ref{E:disttriangle}), there exist objects $X^\bu_1, X^\bu_2 \in
\bigl\langle Y\bigr\rangle_1$ (i.e.~$X^\bu_1$ and  $X^\bu_2$ are direct sums of shift of direct summands of $Y$; see \cite{Rouquier} for the notation)
fitting into an exact triangle
$$
X^\bu_1 \lar X^\bu \lar X^\bu_2 \lar X^\bu_1[1].
$$
But this precisely  means that $D^b(B-\mathsf{mod}) = \bigl\langle Y\bigr\rangle_2$, hence  $\mathsf{der.dim}(B) \le 1$.
\end{proof}

\begin{example}
Let $A$ be the gentle algebra from Example \ref{Ex:MyFavorite}. Then the corresponding gentle algebra $B$ is the path algebra of the quiver
$$
\xymatrix{
\stackrel{1'}\bul \ar[rd]    &                        & \stackrel{2'}\bul \ar[rd] & & \stackrel{2'}\bul \ar[rd] &  \\
 & \stackrel{1}\bul \ar[ru] \ar[rd] &  & \stackrel{2}\bul \ar[ru] \ar[rd] & & \stackrel{3}\bul  \\
\stackrel{1''}\bul \ar[ru] & & \stackrel{2''}\bul \ar[ru] & & \stackrel{3''}\bul \ar[ru] &
}
$$
subject to the following set of relations: any path starting  at a vertex from from the set $\{1', 2', 3'\}$ and ending at a vertex of the set $\{1'', 2'', 3''\}$ is zero, and  conversely, any path starting  at a vertex from  the set $\{1'', 2'', 3''\}$ and ending at a vertex of the set $\{1', 2', 3'\}$ is zero.
\end{example}

\begin{remark}\label{R:GPRevisited}
Let $N:= \kk\llbracket u, v\rrbracket/(uv)$ and
$$
C:= \End_N\bigl(N \oplus N/(v)\bigr)^\circ \cong
\left(
\begin{array}{cc}
\kk\llbracket u, v\rrbracket/(uv) & \kk\llbracket u\rrbracket \\
u \kk\llbracket u\rrbracket  & \kk\llbracket u\rrbracket \\
\end{array}
\right).
$$
Then $C$ is isomorphic to the completed path algebra from Introduction, studied by Gelfand and Ponomarev in \cite{GP}. It is not difficult to show that
$\mathsf{gl.dim}(C) = 3$. In fact, $C$ is the so--called \emph{cluster--tilting} resolution of singularities of $N$. The algebra $N$ is a minor of the algebra $C$ and the module categories of $N$ and $C$ as well as their derived categories are related in a similar way as in the case of the (skew--)gentle algebras $A$ and $B$; see
\cite{ClineParshallScott, Minors}.
\end{remark}

\smallskip
\noindent
As a consequence of Theorem \ref{T:skewgentleResolution}, we get the following result  for the Rouquier dimension of skew--gentle algebras.

\begin{theorem}\label{T:RouquierDimSkewGentle}
Let $A$ be an arbitrary skew--gentle algebra, $H$ be its normalization and $Z$ be a representation--generator of $H-\mathsf{mod}$, viewed as an $A$--module. Then we have:
$D^b(A-\mathsf{mod}) = \langle Z\rangle_2$, i.e.~for any object $X^\bu$ of $D^b(A-\mathsf{mod})$, there exists an exact triangle
 $$
 X^\bu_1 \lar X^\bu \lar X^\bu_2 \lar X^\bu_1[1],
 $$
in which  the complexes $X_1^\bu$ and $X_2^\bu$ are appropriate direct sums of certain shifts of some direct summands of $Z$.
 In particular, $\mathsf{der.dim}(A) \le 1$.
\end{theorem}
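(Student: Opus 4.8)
The plan is to bootstrap from part~(4) of Theorem~\ref{T:skewgentleResolution}, which supplies $D^b(B-\mathsf{mod}) = \langle T \oplus \widetilde{Z}\rangle_2$ with $\widetilde{Z} = \sF(Z)$ and $T = \bar{A}$, by transporting this generation statement along a suitable exact functor $D^b(B-\mathsf{mod}) \to D^b(A-\mathsf{mod})$. The functor I would use is the corner functor attached to the idempotent $f := \smtr{1 & 0 \\ 0 & 0} = 1 - e \in B$: since $fBf = A$, one has $\sG' := \Hom_B(Bf, \ARG) \cong f\cdot(\ARG) \colon B-\mathsf{mod} \to A-\mathsf{mod}$, and because $Bf$ is a projective left $B$-module ($B = Bf \oplus Be$) the functor $\sG'$ is exact, hence descends to a triangulated functor $D^b(B-\mathsf{mod}) \to D^b(A-\mathsf{mod})$, still denoted $\sG'$. (This should be a model for the Verdier localization of $D^b(B-\mathsf{mod})$ onto $D^b(A-\mathsf{mod})$ referred to in the Introduction, but below I only use that $\sG'$ is exact and essentially surjective.)

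First I would check essential surjectivity of $\sG'$. Given $X^\bu \in D^b(A-\mathsf{mod})$, represent it by a bounded complex of finite-dimensional $A$-modules and form, term by term, the bounded complex $\widetilde{X}^\bu := Bf \otimes_A X^\bu$ of finite-dimensional $B$-modules; then $\sG'(\widetilde{X}^\bu) = (fBf) \otimes_A X^\bu = A \otimes_A X^\bu = X^\bu$. The subtle point is that $Bf \otimes_A (\ARG)$ is applied term by term only, never derived, so no flatness of $Bf$ over $A$ is needed; this matters because $Bf$ need not have finite projective dimension over $A$ when $\mathsf{gl.dim}(A) = \infty$ (already for $A = \kk[\varepsilon]/(\varepsilon^2)$), so the naive Cline--Parshall--Scott recollement is not available and is replaced here by this explicit section on objects. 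Combining essential surjectivity with the elementary fact that a triangulated functor $F$ satisfies $F\langle E\rangle_n \subseteq \langle F(E)\rangle_n$ for all $E$ and $n$ (induction on $n$, since $F$ commutes with shifts and finite direct sums, preserves direct summands, and sends distinguished triangles to distinguished triangles) and with Theorem~\ref{T:skewgentleResolution}(4), I obtain $D^b(A-\mathsf{mod}) = \langle \sG'(T) \oplus \sG'(\widetilde{Z})\rangle_2$.

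It then remains to compute $\sG'(T)$ and $\sG'(\widetilde{Z})$ and to absorb them into $\langle Z\rangle_1$. From the explicit formula $\sF(Z) = \smtr{Z \\ Z}$ one reads off $\sG'(\widetilde{Z}) = f \cdot \smtr{Z \\ Z} \cong Z$, the resulting $A$-module structure being exactly the restriction of the $H$-module structure along $A \hookrightarrow H$, i.e.\ the module $Z$ of the statement, while $\sG'(T) \cong f \cdot (B/J) = B/J \cong \bar{A}$, the regular $\bar{A}$-module. To absorb $\bar{A}$, I would use that $\bar{H} \in H-\mathsf{mod} = \add_H(Z)$, so $\bar{H}$, restricted to $A$, lies in $\langle Z\rangle_1$ inside $D^b(A-\mathsf{mod})$, together with the claim that $\bar{A}$ is a direct summand of $\bar{H}$ as an $A$-module: indeed, by the square~\eqref{E:gentle} the inclusion $\bar{A} = \prod_{\gamma \in \widehat{\Omega}} \bar{A}_\gamma \hookrightarrow \prod_{\gamma \in \widehat{\Omega}} \bar{H}_\gamma = \bar{H}$ is componentwise given by the maps $\sigma_\gamma$, each of which — one of the diagonal embeddings $\kk \hookrightarrow \kk \times \kk$, $\kk \times \kk \hookrightarrow \Mat_2(\kk)$, or the identity $\kk = \kk$ — splits as a morphism of left $\bar{A}_\gamma$-modules. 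Hence $\bar{A} \in \langle Z\rangle_1$ and $\langle \sG'(T) \oplus \sG'(\widetilde{Z})\rangle_2 = \langle \bar{A} \oplus Z\rangle_2 = \langle Z\rangle_2$, giving $D^b(A-\mathsf{mod}) = \langle Z\rangle_2$ and $\mathsf{der.dim}(A) \le 1$. The explicit triangle claimed in the statement is obtained by applying $\sG'$ to the triangle furnished by Theorem~\ref{T:skewgentleResolution}(4) for $\widetilde{X}^\bu = Bf \otimes_A X^\bu$; since $\sG'$ sends a finite direct sum of shifts of direct summands of $T \oplus \widetilde{Z}$ to one of $\bar{A} \oplus Z$, and hence of $Z$ (every summand of $\bar{A} \oplus Z$ is a summand of a suitable $Z^{\oplus N}$, by Krull--Schmidt), the two outer terms of the resulting triangle have the asserted form.

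I expect the one genuinely delicate point to be the essential surjectivity of the corner functor on the \emph{bounded} derived categories — this is exactly what rules out a direct appeal to the recollement machinery and forces the term-by-term section $Bf \otimes_A (\ARG)$. Everything after that is routine bookkeeping: the functorial inclusions $F\langle E\rangle_n \subseteq \langle F(E)\rangle_n$, the identifications $\sG'(\widetilde{Z}) \cong Z$ and $\sG'(T) \cong \bar{A}$, and the splitting of the elementary inclusions $\bar{A}_\gamma \hookrightarrow \bar{H}_\gamma$.
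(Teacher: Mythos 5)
Your proof is correct and follows essentially the same route as the paper: both pass to the corner functor $\Hom_B(Bf,\,-\,)$ attached to the idempotent $f$, transport the generation statement $D^b(B-\mathsf{mod})=\bigl\langle T\oplus\widetilde{Z}\bigr\rangle_2$ along it, identify the images of $T$ and $\widetilde{Z}$ as $\bar{A}$ and $Z$, and absorb $\bar{A}$ into $\langle Z\rangle_1$. The only deviation is that where the paper invokes the minors/bilocalization machinery to get essential surjectivity of $\sD\widetilde\sG$, you give a direct argument via the term-by-term section $Bf\otimes_A(\,-\,)$, which is a valid and self-contained substitute.
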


\begin{proof}
We apply again the construction of minors, but now to another  natural idempotent
$
f = \left(
\begin{array}{cc}
1 & 0 \\
0 & 0
\end{array}
\right)$ of the algebra $B$. Let  $
P := B f =
\left(
\begin{array}{c}
A \\
I
\end{array}
\right).
$
Then we have: $fBf = A \cong \End_B(P)^\circ$. Next, $\widetilde\sG:= \Hom_B(P, \,-\,): B-\mathsf{mod} \lar A-\mathsf{mod}$ is a bilocalization functor (between abelian categories), and its derived functor $$\sD\widetilde\sG: D^b(B-\mathsf{mod}) \lar D^b(A-\mathsf{mod})$$ is a bilocalization functor (between triangulated categories). Since  $\sD\tilde\sG$ is essentially surjective, we have:
$D^b(A-\mathsf{mod}) =  \bigl\langle \sD\widetilde\sG(Y)\bigr\rangle_2$, where $Y = T \oplus \widetilde{Z}$ is the left $B$--module introduced in Theorem \ref{T:skewgentleResolution}. Next, observe that the composition $\widetilde\sG \circ \sF: H-\mathsf{mod} \lar A-\mathsf{mod}$ is isomorphic to the forgetful functor. Indeed, for any left $H$--module $X$ we have:
$$
\widetilde\sG \circ \sF(X) = \Hom_B\left(Bf,
\left(
\begin{array}{c}
X \\
X
\end{array}
\right)
 \right) \cong f \cdot
 \left(\begin{array}{c}
X \\
X
\end{array}\right) \cong X.
$$
Similarly,
$
\widetilde\sG(T) = \widetilde\sG
\left(
\begin{array}{c}
\bar{A} \\
0
\end{array}
\right) \cong \bar{A}.
$
Since both functors $\widetilde\sG$ and $\sF$ are exact, we have:
$$\sD\widetilde\sG \circ \sD\sF(Y) \cong \widetilde\sG \circ \sF(Y) \cong \bar{A} \oplus Z.$$
Since we have an injective homomorphism of semi--simple algebras $\bar{A} \lar \bar{H}$, the semi--simple left $A$--module $\bar{A}$ is a direct summand of $Z$, what finishes the proof.
\end{proof}

\begin{corollary}\label{C:RouquierDimSkewGentle}
Let $A$ be a skew--gentle  algebra of infinite derived representation type. Then we have: $\mathsf{der.dim}(A) =  1$.
\end{corollary}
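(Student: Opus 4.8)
The plan is to combine the upper bound already in hand with a soft lower bound. By Theorem~\ref{T:RouquierDimSkewGentle} we have $\mathsf{der.dim}(A) \le 1$, so the entire content of the corollary is the inequality $\mathsf{der.dim}(A) \ge 1$; equivalently, one has to rule out $\mathsf{der.dim}(A) = 0$, and I would do this by contradiction. Suppose $\mathsf{der.dim}(A) = 0$. By the definition of Rouquier dimension \cite{Rouquier} this means $D^b(A-\mathsf{mod}) = \langle G\rangle_1$ for a single object $G$. Since $A$ is finite dimensional, $D^b(A-\mathsf{mod})$ is Krull--Schmidt; write $G \cong G_1 \oplus \dots \oplus G_r$ with each $G_i$ indecomposable. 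The subcategory $\langle G\rangle_1$ is by construction closed under shifts, finite direct sums and direct summands, but \emph{not} under forming cones; hence every object of $\langle G\rangle_1$ is isomorphic to a finite direct sum of shifts of $G_1, \dots, G_r$. Under the assumption $\mathsf{der.dim}(A) = 0$ the category $D^b(A-\mathsf{mod})$ would therefore contain at most $r$ indecomposable objects up to isomorphism and shift; in other words, $A$ would be of finite derived representation type.

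This contradicts the hypothesis, which finishes the proof. If one prefers an explicit obstruction rather than merely unwinding the meaning of ``infinite derived representation type'', one can invoke the classification of the indecomposable objects of $\mathsf{Hot}^\ast(A-\mathsf{pro}) \hookrightarrow D^b(A-\mathsf{mod})$ reviewed in Section~\ref{S:GentleCombinatorics} (see also \cite{BekkertMerklen, BobinskiGeissSkowronski}): a skew--gentle algebra that is not of finite derived representation type either carries a $\kk^*$--family of pairwise non--isomorphic band complexes $B^\bu(\overline{w}, m, \pi)$, all sharing the same underlying graded object, so that no two of them can differ merely by a shift; or else it carries string complexes $S^\bu(\overline{v})$ of arbitrarily large total length, which cannot all lie in finitely many shift orbits because the total $\kk$--dimension of the minimal complex of projectives representing an object is a shift--invariant. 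In either case $D^b(A-\mathsf{mod})$ has infinitely many indecomposables up to isomorphism and shift, contradicting the previous paragraph. Hence $\mathsf{der.dim}(A) \ge 1$, and together with Theorem~\ref{T:RouquierDimSkewGentle} we obtain $\mathsf{der.dim}(A) = 1$.

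The only delicate point I anticipate is the passage from ``infinite derived representation type'' to the existence of infinitely many pairwise non--shift--equivalent indecomposable objects — that is, making sure the classification of Section~\ref{S:GentleCombinatorics} is being applied correctly and that the relevant families really are not shift orbits of finitely many fixed complexes. Everything else is the formal Krull--Schmidt argument together with Theorem~\ref{T:RouquierDimSkewGentle}.
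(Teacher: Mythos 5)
Your argument is correct and is essentially the paper's own reasoning: the paper leaves the corollary without a written proof precisely because it follows from Theorem~\ref{T:RouquierDimSkewGentle} together with the observation (made explicitly in the proof of Proposition~\ref{C:RouquierDimReprDiscr}) that $\mathsf{der.dim}(A)=0$ would force $D^b(A-\mathsf{mod})$ to have only finitely many indecomposables up to shift. Your Krull--Schmidt unwinding of $\langle G\rangle_1$ supplies exactly the detail the paper takes for granted, so nothing further is needed.
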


\begin{proposition}\label{C:RouquierDimReprDiscr}
Let $A$ be an (indecomposable)  $\kk$--algebra of discrete derived representations type. Then we have:
\begin{itemize}
\item $\mathsf{der.dim}(A) =  0$ if $A$ is derived equivalent to a hereditary algebra of Dynkin type.
\item $\mathsf{der.dim}(A) =  1$ otherwise.
\end{itemize}
\end{proposition}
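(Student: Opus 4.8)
The plan is to combine the classification of derived--discrete algebras with the results on (skew--)gentle algebras proved above, using that the Rouquier dimension depends only on the triangulated category $D^b(A-\mathsf{mod})$ and is therefore a derived invariant \cite{Rouquier}. By Vossieck's theorem \cite{Vossieck}, in the form completed by Bobinski, Gei\ss{} and Skowro\'nski \cite{BobinskiGeissSkowronski}, an indecomposable derived--discrete $\kk$--algebra $A$ is derived equivalent either to \emph{(i)} a hereditary algebra of Dynkin type, or to \emph{(ii)} one of the gentle algebras $\Lambda(r, n, m)$ (with $n \ge r \ge 1$ and $m \ge 0$), whose Gabriel quiver has precisely one oriented cycle and which is of infinite derived representation type. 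These two families are disjoint, so I would split the proof into the corresponding two cases, which match exactly the two alternatives in the statement.

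In case \emph{(i)}, I would pick a hereditary algebra $H_0$ of Dynkin type with $D^b(A-\mathsf{mod}) \cong D^b(H_0-\mathsf{mod})$. Since $H_0$ is hereditary, every complex over $H_0$ splits in $D^b(H_0-\mathsf{mod})$ into the direct sum of the shifts of its cohomologies, so every indecomposable object of $D^b(H_0-\mathsf{mod})$ is a shift of an indecomposable $H_0$--module (see \cite{Happel}). As $H_0$ is representation--finite, there are only finitely many isomorphism classes $M_1, \dots, M_s$ of such modules. Putting $G := M_1 \oplus \dots \oplus M_s$, every object of $D^b(H_0-\mathsf{mod})$ is a finite direct sum of shifts of direct summands of $G$, that is, it lies in $\langle G \rangle_1$ in the notation of \cite{Rouquier}. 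Hence $D^b(H_0-\mathsf{mod}) = \langle G \rangle_1$, so $\mathsf{der.dim}(H_0) = 0$ and, by derived invariance, $\mathsf{der.dim}(A) = 0$.

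In case \emph{(ii)}, I would note that $\Lambda := \Lambda(r, n, m)$ is gentle, hence skew--gentle; it is non--hereditary, since its Gabriel quiver contains an oriented cycle; and it is of infinite derived representation type. The bound $\mathsf{der.dim}(\Lambda) \le 1$ is then Theorem \ref{T:RouquierDimSkewGentle}, while Corollary \ref{C:RouquierDimSkewGentle} gives the equality $\mathsf{der.dim}(\Lambda) = 1$; by derived invariance $\mathsf{der.dim}(A) = 1$. If one prefers to establish the lower bound $\mathsf{der.dim}(A) \ge 1$ without quoting Corollary \ref{C:RouquierDimSkewGentle}: the category $D^b(A-\mathsf{mod})$ is Krull--Schmidt, so $\mathsf{der.dim}(A) = 0$ would mean $D^b(A-\mathsf{mod}) = \langle G \rangle_1$ for some object $G$, which would force every indecomposable object to be a shift of one of the finitely many indecomposable summands of $G$; this contradicts the fact, part of Vossieck's classification, that $\Lambda(r, n, m)$ has infinitely many pairwise non--isomorphic indecomposable objects in its bounded derived category, even up to shift.

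Thus the proof is essentially bookkeeping: the two substantive ingredients --- the upper bound $\mathsf{der.dim} \le 1$ for (skew--)gentle algebras, obtained above, and the classification of derived--discrete algebras --- are already in place. The only point requiring some care is to invoke the classification in the precise form used here, namely that the second family consists of non--hereditary gentle algebras of infinite derived representation type, so that Theorem \ref{T:RouquierDimSkewGentle} and Corollary \ref{C:RouquierDimSkewGentle} apply and the case split reproduces the dichotomy of the statement; this is immediate from the cited results.
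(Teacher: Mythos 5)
Your argument is correct and follows essentially the same route as the paper: invoke the Vossieck--Bobi\'nski--Gei\ss--Skowro\'nski classification, handle the Dynkin case by counting indecomposables up to shift, and in the gentle case combine the upper bound $\mathsf{der.dim} \le 1$ from the skew--gentle results with the existence of infinitely many indecomposables up to shift to rule out $\mathsf{der.dim} = 0$. The extra detail you supply (the splitting of complexes over a hereditary algebra and the Krull--Schmidt argument showing $\langle G\rangle_1$ admits only finitely many indecomposables up to shift) merely makes explicit what the paper leaves implicit.
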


\begin{proof} This is a consequence of Corollary \ref{C:RouquierDimSkewGentle} combined with Vossieck's classification of finite dimensional algebras of discrete derived representation type \cite{Vossieck} (see also \cite{BobinskiGeissSkowronski}). The latter classification asserts that
 an indecomposable algebra $A$ has discrete derived representation type if and only if  it is either derived equivalent to the path algebra of a Dynkin quiver
or to a gentle algebra of a very special form. In the first case, there are only finitely many (up to shifts) indecomposable objects in
$D^b(A-\mathsf{mod})$, hence $\mathsf{der.dim}(A) =  0$. Otherwise, there are infinitely many (again, up to shifts)  indecomposable objects
of $D^b(A-\mathsf{mod})$. Therefore, we have: $\mathsf{der.dim}(A) =  1$.
\end{proof}

\begin{example}\label{Ex:DualNumbers}
Consider the gentle algebra $A = \kk[\varepsilon]/(\varepsilon^2)$. Then we have: $\mathsf{der.dim}(A) =  1$ and
$D^b(A-\mathsf{mod}) = \langle \kk\rangle_2$. To show this, we actually do not need Proposition  \ref{C:RouquierDimReprDiscr}. Indeed, it is well--known that any indecomposable object of $D^b(A-\mathsf{mod})$ is  isomorphic (up to a shift)
either to $\kk$ or to a complex of the form
\begin{equation}\label{E:indeccomplex} X_n := \bigl( \dots \lar 0 \lar \underbrace{A \stackrel{\varepsilon}\lar A \stackrel{\varepsilon}\lar \dots \stackrel{\varepsilon}\lar A}_{n \; \mathrm{times}} \lar 0 \lar \dots\bigr).
\end{equation}
For any $n \in \NN$, let $w_n$ be a generator of the $\kk$--vector space $\Ext_A^n(\kk, \kk) \cong \kk$. Since
$X_n \cong \mathsf{Cone}\bigl(\kk \stackrel{w_n}\lar \kk[n]\bigr)$, we have the statement.
\end{example}

\begin{remark}\label{R:MoreGeneralSetting} The proofs of all results of this section can be generalized on the following setting. Consider an algebra  $H = H_1 \times \dots \times H_t$, where
$H_i$ is derived--equivalent to the path algebra of a Dynkin quiver for any $1 \le i \le t$. Let $A \subset H$ be a subalgebra such that $I:= \mathsf{rad}(H) = \mathsf{rad}(A)$. Then the algebra $B :=
\left(
\begin{array}{cc}
A & H \\
I & H
\end{array}
\right)$ has finite global dimension. Moreover, let $Z$ be the direct sum of all (up to a shift)  indecomposable objects of $D^b\bigl(H-\mathsf{mod})$. Viewing $Z$ as an object of  $D^b\bigl(A-\mathsf{mod})$, we have:
$D^b(A-\mathsf{mod}) = \langle Z\rangle_2$. As a consequence, we get: $\mathsf{der.dim}(A) \le 1$.
\end{remark}

\begin{example}\label{Ex:fatpointRouqDim}
For any $n \in \NN$, let $A = \kk[\varepsilon_1, \dots, \varepsilon_n]/(\varepsilon_1, \dots, \varepsilon_n)^2$. Then we have: $\mathsf{der.dim}(A) = 1$.
Indeed, since $A$ has infinite derived representation type, we have: $\mathsf{der.dim}(A) \ge 1$. On the other hand, we have an algebra extension
$
A \subset H:= \underbrace{T_2 \times \dots \times T_2}_{n \; \mathrm{times}},
$
satisfying assumptions of Remark \ref{R:MoreGeneralSetting}. For any $1 \le i \le n$, let $$R_i:= A/(\varepsilon_1, \dots, \hat{\varepsilon}_i, \dots, \varepsilon_n) \cong \kk[\varepsilon_i]/(\varepsilon_i^2).$$ Then we have:
$
D^b(A-\mathsf{mod}) = \bigl\langle \kk \oplus R_1 \oplus \dots \oplus R_n\bigr\rangle_2,
$
implying the statement.
\end{example}

\begin{conjecture} Let $A$ be a finite dimensional algebra of tame derived representation type. Then we have: $\mathsf{der.dim}(A) =  1$.
\end{conjecture}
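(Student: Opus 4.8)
The lower bound $\mathsf{der.dim}(A) \ge 1$ is the easy half and needs no structure theory: if $\mathsf{der.dim}(A) = 0$ then $D^b(A-\mathsf{mod}) = \langle G\rangle_1$ for some object $G$, which forces $D^b(A-\mathsf{mod})$ to have only finitely many indecomposable objects up to shift, contradicting derived tameness (the band complexes already form one--parameter families). So the whole content of the conjecture is the bound $\mathsf{der.dim}(A) \le 1$, and the plan is to reproduce the mechanism of Theorems \ref{T:skewgentleResolution} and \ref{T:RouquierDimSkewGentle} in this generality.

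Since $\mathsf{der.dim}$ is a derived invariant, one is free to replace $A$ by a derived--equivalent algebra, so the target is a model $A'$ of $A$ admitting a \emph{normalization} in the sense of the present paper: a subalgebra inclusion $A' \subseteq H$ with $\mathsf{rad}(H) = \mathsf{rad}(A') =: I$, where $H = H_1 \times \dots \times H_t$ and each $H_i$ is derived--equivalent to a Dynkin quiver (this is precisely the setting of Remark \ref{R:MoreGeneralSetting}; the normalizations occurring for skew--gentle algebras are of this shape, being products of algebras Morita--equivalent to the $T_m$). Given such an $H$ one forms $B = \left(\begin{smallmatrix} A' & H \\ I & H\end{smallmatrix}\right)$; by Theorem \ref{T:skewgentleResolution} (generalized as in Remark \ref{R:MoreGeneralSetting}) the algebra $B$ has finite global dimension, $D^b(B-\mathsf{mod})$ has a semi--orthogonal decomposition along $D^b(H-\mathsf{mod})$ and $D^b(\bar{A}'-\mathsf{mod})$, and $D^b(A'-\mathsf{mod})$ is a bilocalization of $D^b(B-\mathsf{mod})$; pushing the generator $T \oplus \sF(Z)$ of $D^b(B-\mathsf{mod})$ forward along this bilocalization exactly as in the proof of Theorem \ref{T:RouquierDimSkewGentle} gives $D^b(A'-\mathsf{mod}) = \langle Z\rangle_2$, where $Z$ is a representation generator of $H-\mathsf{mod}$ regarded as an $A'$--module. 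Hence $\mathsf{der.dim}(A) = \mathsf{der.dim}(A') \le 1$.

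In this way the conjecture reduces to a \emph{normalization theorem} for derived--tame algebras: every finite--dimensional algebra of tame derived representation type is derived--equivalent to one of B\"ackstr\"om type over a hereditary algebra of finite representation type. For the derived--tame families that are currently understood this can be checked: gentle and skew--gentle algebras are already covered by Corollary \ref{C:RouquierDimSkewGentle}; the derived--tame algebras whose derived categories reduce to representations of bunches of (semi--)chains should admit a normalization built directly from the bunch combinatorics, along the lines of Definition \ref{D:skewgentle}; and the algebras derived--equivalent to $D^b(\Coh(X))$ for a tame (possibly non--commutative) nodal projective curve $X$ are handled by combining Orlov's theorem $\mathsf{der.dim}(X) = 1$ for smooth projective curves \cite{Orlov} with the descent techniques of \cite{bd}, precisely as in Example \ref{Ex:RouqDimWeierCubic}. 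Degenerate tubular algebras and any remaining sporadic derived--tame families would have to be treated individually, constructing the relevant normalization by hand.

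The main obstacle is genuine: there is no complete classification of finite--dimensional algebras of tame derived representation type, so one cannot reduce the statement to a finite list; and even granting such a list, there is no formal reason why a derived--tame algebra must be B\"ackstr\"om over a finite--type hereditary algebra --- some representatives may become B\"ackstr\"om only after a carefully chosen tilt, and exhibiting that tilt is the delicate point. A classification--free alternative would be to argue directly on indecomposables: the string complexes of $A$ form a rigid, Dynkin--type skeleton, while each band family is parametrized by $\PP^1$ (or by a tube), so one would like a single ``generic'' object through which every band complex is rebuilt by a single cone; assembling these generic objects, together with the string part, into one generator $Z$ with $D^b(A-\mathsf{mod}) = \langle Z\rangle_2$ --- uniformly across all band families and compatibly --- is exactly where the difficulty concentrates.
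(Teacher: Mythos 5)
This statement is a \emph{conjecture} in the paper: no proof is given, and your proposal does not supply one either --- it is a program, and you concede as much in your closing paragraph. The lower bound is fine: if $\mathsf{der.dim}(A)=0$ then every object of $D^b(A-\mathsf{mod})$ is a direct summand of a finite sum of shifts of a single generator, so by Krull--Schmidt there are only finitely many indecomposables up to shift, which is incompatible with the one--parameter families present in the genuinely tame case. You have also correctly identified the mechanism the paper uses for the cases it can handle (Theorem \ref{T:skewgentleResolution}, Theorem \ref{T:RouquierDimSkewGentle}, Remark \ref{R:MoreGeneralSetting}): pass to $B=\left(\begin{smallmatrix} A & H\\ I & H\end{smallmatrix}\right)$, use the semi--orthogonal decomposition to get $D^b(B-\mathsf{mod})=\langle T\oplus\sF(Z)\rangle_2$, and push the generator through the bilocalization $\sD\widetilde\sG$.

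The genuine gap is the step you call the ``normalization theorem'': that every finite--dimensional derived--tame algebra is derived--equivalent to an algebra $A'$ admitting an overring $A'\subseteq H$ with $\mathsf{rad}(A')=\mathsf{rad}(H)$ and $H$ a product of hereditary algebras of Dynkin (derived--finite) type. This is not proved in the paper, is not available in the literature, and is at least as strong as the conjecture itself; since there is no classification of derived--tame algebras, it cannot be verified case by case, and there is no a priori reason such a B\"ackstr\"om model exists even after an arbitrary tilt. Your ``classification--free'' alternative --- building a single generator out of generic band objects and the string skeleton --- names exactly the right difficulty but does not resolve it: one would need to show that \emph{every} band complex in \emph{every} family is a cone over objects from a \emph{fixed finite} collection, uniformly in the continuous parameter, and no construction of such a collection is offered. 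So the argument establishes nothing beyond what Corollary \ref{C:RouquierDimSkewGentle} and Theorem \ref{T:RoquierDimSingCurves} already give for the known (skew--)gentle and tame--curve cases; the conjecture remains open.
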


\subsection{Rouquier dimension of tame projective curves}
Let $X$ be a projective scheme over the field $\kk$ and $\mathsf{der.dim}(X)$ be the Rouquier dimension of the derived category $D^b\bigl(\Coh(X)\bigr)$ of coherent sheaves on $X$. It was shown by Rouquier \cite{Rouquier} that
$
\mathsf{dim}(X) \leq \mathsf{der.dim}(X) < \infty.
$
Moreover, for a smooth projective curve  $X$, Orlov \cite{Orlov} has shown that $\mathsf{der.dim}(X) = 1$. In the case of singular curves,
some upper bounds for $\mathsf{der.dim}(X)$ were obtained in  \cite[Theorem 10]{bd} and  \cite[Corollary 7.2]{BurbanDrozdGavranIMRN}. However, to our best knowledge, apart of the algebra $\kk[\varepsilon]/(\varepsilon^2)$ (see Example \ref{Ex:DualNumbers}), no other precise values for the Rouquier dimension of  a singular projective scheme were known so far.

\smallskip
\noindent
 Now, let $X$ be a reduced projective curve over $\kk$. According to \cite{DGVB,Duke}, the derived category $D^b\bigl(\Coh(X)\bigr)$ has tame representation type if and only if
\begin{itemize}
\item $X$ is smooth and the genus of $X$ is at most one (the latter case follows from Atiyah's work \cite{At}).
\item $X$ is either a chain  or a cycle of several copies of $\PP^1$; see \cite{DGVB,Duke}.
\end{itemize}

\begin{theorem}\label{T:RoquierDimSingCurves}
Let $X$ be a tame projective curve. Then we have: $\mathsf{der.dim}(X) = 1$.
\end{theorem}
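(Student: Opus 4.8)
The plan is to reduce the singular case to the smooth case via the normalization map, exploiting the fact that a chain or cycle of projective lines has $\PP^1$ (or a disjoint union of copies of $\PP^1$) as its normalization, together with the categorical resolution results of our earlier work \cite{bd}. Concretely, let $X$ be a tame projective curve. The smooth tame curves (the point, $\PP^1$, and smooth genus-one curves) are covered by Orlov's theorem \cite{Orlov}, so the content is in the singular cases: $X$ a chain or a cycle of projective lines. For such $X$, let $\nu\colon \widetilde{X} \lar X$ be the normalization, with $\widetilde{X}$ a disjoint union of copies of $\PP^1$. I would first record that $\mathsf{der.dim}(\widetilde{X}) = 1$ by Orlov's result (applied componentwise, noting Rouquier dimension is insensitive to taking finite disjoint unions). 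The lower bound $\mathsf{der.dim}(X)\geq \mathsf{dim}(X) = 1$ is automatic from Rouquier's inequality \cite{Rouquier} quoted above.

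For the upper bound, I would use the explicit tilting/derived-equivalence description of $D^b(\Coh(X))$ in terms of a gentle algebra. By Burban \cite{Burban}, a chain of $n+1$ projective lines is derived equivalent to a gentle algebra $A$ (the one appearing in the excerpt's example), and by \cite{bd} a cycle of projective lines is closely related to an analogous (skew-)gentle algebra — more precisely, in \cite{bd} we showed that an appropriate gentle algebra $B$ of global dimension two is a categorical resolution of $D^b(\Coh(X))$ for the cycle, with $D^b(\Coh(X))$ a Verdier localization of $D^b(B\text{-}\mathsf{mod})$. In the chain case one simply invokes Theorem \ref{T:RouquierDimSkewGentle}: $D^b(\Coh(X)) \simeq D^b(A\text{-}\mathsf{mod}) = \langle Z\rangle_2$ where $Z$ is a representation generator of $H\text{-}\mathsf{mod}$, $H$ the normalization of $A$, giving $\mathsf{der.dim}(X)\leq 1$ immediately. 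In the cycle case, since Rouquier dimension does not increase under passing to a Verdier quotient (a generating object descends to a generating object, and triangles are preserved), the bound $\mathsf{der.dim}(B)\leq 1$ from Theorem \ref{T:skewgentleResolution}(4) transfers to $D^b(\Coh(X))$.

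The main obstacle I anticipate is the cycle case, specifically making the descent of Rouquier dimension along the localization $D^b(B\text{-}\mathsf{mod}) \twoheadrightarrow D^b(\Coh(X))$ completely rigorous and identifying the image of the generator $Y = T\oplus \widetilde Z$ explicitly as a geometric object on $X$ — this is exactly the sort of computation that Example \ref{Ex:RouqDimWeierCubic} in the introduction carries out for the nodal Weierstrass cubic, where the answer is $\langle \kk_s \oplus \widetilde\kO \oplus \widetilde\kO(1)\rangle_2$. The clean statement is that $Z$, a representation generator of $H\text{-}\mathsf{mod}$ with $H \simeq \prod T_{m_i}$, corresponds under the derived equivalence / categorical resolution to a finite direct sum of shifts of the line bundles $\widetilde\kO(n) = \nu_*\kO_{\PP^1}(n)$ for finitely many $n$ together with the structure sheaves $\kk_s$ of the singular points; one then checks directly that every coherent sheaf on $X$ sits in a two-term triangle built from shifts of summands of this object. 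I would finish by assembling: Orlov for the smooth strata, Theorem \ref{T:RouquierDimSkewGentle} plus Burban's derived equivalence for chains, and the categorical resolution of \cite{bd} plus Theorem \ref{T:skewgentleResolution}(4) plus localization-invariance for cycles, concluding $\mathsf{der.dim}(X) = 1$ in all tame cases.
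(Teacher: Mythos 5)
Your proposal is correct and follows essentially the same route as the paper: Orlov's theorem for the smooth cases, and for singular tame curves the transfer of the two--step generation $\langle Z\rangle_2$ from the derived category of a gentle algebra (Theorem \ref{T:RouquierDimSkewGentle}) along an essentially surjective exact functor onto $D^b\bigl(\Coh(X)\bigr)$, with the lower bound supplied by Rouquier's inequality $\dim(X) \le \mathsf{der.dim}(X)$. The paper merely phrases this uniformly for chains and cycles via the single essentially surjective functor $\sP$ of \cite[Section 6]{bd}, rather than splitting into the derived equivalence of \cite{Burban} for chains and the Verdier localization of the categorical resolution for cycles, and it does not need the explicit geometric identification of the generator that you flag as an obstacle.
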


\begin{proof} For $X$ smooth (i.e. $X = \PP^1$ or an elliptic curve), this is a special case of Orlov's result \cite{Orlov}. In \cite[Section 6]{bd}
it was shown that for a singular tame curve $X$
there exists a \emph{gentle} algebra $\Lambda_X$ and an essentially surjective functor
\begin{equation}\label{E:essentiallysurjective}
D^b\bigl(\Lambda_X-\mathsf{mod}\bigr) \stackrel{\sP}\lar D^b\bigl(\Coh(X)\bigr).
\end{equation}
Let $Z$ be a representation generator of the normalization of the algebra $\Lambda_X$. According to  Theorem \ref{T:RouquierDimSkewGentle} we have:  $D^b\bigl(\Coh(X)\bigr) = \bigl\langle \sP(Z)\bigr\rangle_2$, implying the result.
\end{proof}

\smallskip
\noindent
It is natural to ask about an \emph{explicit} description of the  generator $\sP(Z)$ of the derived category $D^b\bigl(\Coh(X)\bigr)$. To answer this question, we  need a better understanding of the functor $\sP$ from the proof
of Theorem \ref{T:RoquierDimSingCurves}. In the following example, we give  an answer to a question, posed to the first--named author by Rouquier in 2004.

\begin{example}\label{Ex:RouqDimWeierCubic}
 Let $E := \overline{V(y^2-x^3-x^2)} \subset \PP^2$ be a nodal Weierstra\ss{} cubic, $s = (0, 0) \in E$ its unique singular point and
$\PP^1 \stackrel{\nu}\lar E$ be its  normalization map. Let $\kO_E$ and $\kO_{\PP^1}$ be the structure sheaves of $E$ and $\PP^1$, respectively.
We denote $\kO := \kO_E$ and put $\widetilde{\kO}(n) := \nu_*\bigl(\kO_{\PP^1}(n)\bigr)$ for any $n \in \ZZ$.
Then we claim that
\begin{equation}
D^b\bigl(\Coh(E)\bigr) = \bigl\langle \kk_s \oplus \widetilde\kO \oplus \widetilde{\kO}(1)\bigr\rangle_2.
\end{equation}
In other words, for any object $\kF^\bu$ of $D^b\bigl(\Coh(E)\bigr)$ there exists an exact triangle
$$
\kF^\bu_1 \lar \kF^\bu \lar \kF^\bu_2 \lar \kF^\bu_1[1],
$$
where $\kF^\bu_1$ and $\kF^\bu_2$ are direct sums of appropriate shifts of coherent sheaves $\kk_s$,  $\widetilde\kO$ and $\widetilde{\kO}(1)$.
The explicit description of the functor $\sP$ from Theorem \ref{T:RoquierDimSingCurves} includes the following key intermediate ingredient. Let
$\kI := \mathit{Ann}_E\bigl(\widetilde\kO/\kO\bigr) \cong \mathit{Hom}_E\bigl(\widetilde\kO, \kO)$ be the conductor ideal. Then we have a sheaf of $\kO$--orders
$
\kB :=
\left(
\begin{array}{cc}
\kO & \widetilde\kO \\
\kI & \widetilde\kO
\end{array}
\right)
$
(called in \cite{bd} \emph{Auslander order} of $E$)
as well as  the associated non--commutative nodal curve $\mathbbm{E} = (E, \kB)$. The categories of coherent sheaves $\Coh(\mathbbm{E})$ and $\Coh(E)$ stand in the same relation
as (skew)--gentle algebras $A$ and $B$ from Theorem \ref{T:skewgentleResolution}.
In particular, consider the following idempotent sections
$f:=
\left(
\begin{array}{cc}
1 & 0 \\
0 & 0
\end{array}
\right) \; \mbox{\rm and} \; e:=
\left(
\begin{array}{cc}
0 & 0 \\
0 & 1
\end{array}
\right)
\in \Gamma(E, \kB)$. Then $$\kP := \kB f \cong
\left(
\begin{array}{c}
\kO \\
\kI
\end{array}
\right) \; \mbox{\rm and} \; \kQ := \kB e \cong
\left(
\begin{array}{c}
\widetilde{\kO} \\
\widetilde{\kO}
\end{array}
\right)
$$
 are  locally projective sheaves of left $\kB$--modules and  $f \kB f \cong \kO \cong \mathit{End}_\kB(\kP)$, whereas
$e \kB e \cong \widetilde{\kO} \cong \mathit{End}_\kB(\kQ)$. Then we have two exact  bilocalization functors
$$
\widetilde\sG:=
\mathit{Hom}_\kB(\kP, \,-\,) \; \mbox{\rm and}\; \sG:=
\mathit{Hom}_\kB(\kQ, \,-\,): \;  \Coh(\mathbbm{E}) \lar \Coh(E).
$$
The kernels of $\widetilde\sG$ and $\sG$ can be identified with the  categories of finite--dimensional $\kk$--modules over the algebras $\Gamma(E, \kB/\kL)$ and $\Gamma(E, \kB/\kJ)$ respectively, where
$$
\kL:= \mathsf{Im}\bigl(\kB f \otimes_\kO f \kB \xrightarrow{\mathsf{mult}} \kB\bigr) =
\left(
\begin{array}{cc}
\kO & \widetilde\kO \\
\kI & \kI
\end{array}
\right) \; \mbox{\rm and} \; \kJ:= \mathsf{Im}\bigl(\kB e \otimes_\kO e \kB \xrightarrow{\mathsf{mult}} \kB\bigr) =
\left(
\begin{array}{cc}
\kI & \widetilde\kO \\
\kI & \widetilde\kO
\end{array}
\right).
$$
It follows that $\kB/\kL \cong \widetilde\kO/\kI \cong (\kk \times \kk)_s$ and $\kB/\kJ \cong \kO/\kI \cong \kk_s$. Moreover, the  functor
$\sG$ (respectively, $\widetilde\sG$) admits left and right adjoint functors $\sF$ and $\sH$ (respectively, $\widetilde\sF$ and $\widetilde\sH$).

Since the sheaf of two--sided $\kB$--modules $\kJ$ is locally projective, viewed as a  right $\kB$--module,
analogously to (\ref{E:recollement})
we get a recollement diagram
\begin{equation}\label{E:recollementsheaves}
\xymatrix{D^b\bigl(\mathsf{Vect}(\kk)\bigr) \ar[rr]|{\,\sI\,} && D^b\bigl(\Coh(\mathbbm{E})\bigr) \ar@/^2ex/[ll]  \ar@/_2ex/[ll] \ar[rr]|{\,\sD\sG\,}
  && D^b\bigl(\Coh(\PP^1)\bigr), \ar@/^2ex/[ll]^{\,\sD\sH\,} \ar@/_2ex/[ll]_{\,\sD\sF\,}}
\end{equation}
as well as the associated  semi--orthogonal decomposition $$
D^b\bigl(\Coh(\mathbbm{E})\bigr) = \bigl\langle D^b\bigl(\mathsf{Vect}(\kk), D^b\bigl(\Coh(\PP^1)\bigr)\bigr\rangle.$$
Let $\kS_* := \sI(\kk)$. Then  the complex  $\kH^\bu:= \kS_\ast \oplus \kQ(-1) \oplus \kQ$ is a tilting object in the category $D^b\bigl(\Coh(\mathbbm{E})\bigr)$
$\bigl($i.e.~$\Hom_{D^b(\mathbbm{E})}\bigl(\kH^\bu, \kH^\bu[n]\bigr) = 0$ for $n \ne 0$ and $\kH^\bu$ generates $D^b\bigl(\Coh(\mathbbm{E})\bigr)\bigr)$ and its
endomorphism algebra $\End_{D^b(\mathbbm{E})}\bigl(\kH^\bu\bigr)$ is isomorphic to the gentle algebra $A$ from Example \ref{Ex:MyFavorite}. As a consequence,
we obtain  an exact equivalence of triangulated categories
$
D^b\bigl(A-\mathsf{mod}\bigr) \stackrel{\sT}\lar  D^b\bigl(\Coh(\mathbbm{E})\bigr).
$
Composing  $\sT$ with  $\sD\widetilde\sG$, we get an essentially surjective functor
$D^b(A-\mathsf{mod}) \stackrel{\sP}\lar D^b\bigl(\Coh(E)\bigr)$, which is moreover a bilocalization functor.
Everything can be summarized by the following diagram of categories and functors:
$$
\xymatrix{
\Perf(E) \ar@{^{(}->}[rr]^-{\sL \widetilde\sF} \ar@{_{(}->}[d] & & D^b\bigl(\Coh(\mathbbm{E})\bigr)  \ar[dll]_-{\sD\widetilde\sG}\\
D^b\bigl(\Coh(E)\bigr) & & D^b(A-\mathsf{mod}). \ar[ll]_{\sP} \ar[u]_-{\sT}
}
$$
For any vertex  $1 \le i \le 3$, let $S_i$ be the corresponding simple module, $P_i$ the corresponding indecomposable projective module and
$I_i$ the corresponding indecomposable injective  module over the algebra $A$. Obviously, $S_1 = I_1$ and $S_3 = P_3$. To get a desired generator
of the derived category $D^b\bigl(\Coh(E)\bigr)$, we have to compute the images of the following  objects of the category $A-\mathsf{mod}$ under the functor
$\sP$:
\begin{itemize}
\item of the  simple modules $S_1$, $S_2$ and $S_3$;
\item of the following two--dimensional representations $$U_+ := \xymatrix{
\kk   \ar@/^/[r]^{1} \ar@/_/[r]_{0}  & \kk \ar@/^/[r] \ar@/_/[r] & 0
}
\; \mbox{\rm and} \; V_+:= \xymatrix{
0  \ar@/^/[r] \ar@/_/[r]    & \kk   \ar@/^/[r]^{1} \ar@/_/[r]_{0}& \kk
}
$$
as well as of their flips $U_-$ and $V_-$ with respect to the natural involution of the algebra $A$;
\item of the following three--dimensional representation
$W_+:= \xymatrix{
\kk   \ar@/^/[r]^{1} \ar@/_/[r]_{0}  & \kk \ar@/^/[r]^{1} \ar@/_/[r]_{0}  & \kk
}
$
as well as of its flip $W_-$.
\end{itemize}
Now, let us proceed with the computations.
\begin{itemize}
\item It follows from the description of the tilting equivalence $\sT$ that
$$\sT(S_3) = \sT(P_3) = \kS_\ast[-1].$$
 Therefore, $\sP(S_3) = \sD\sG\bigl(\kS_*[-1]\bigr) \cong  \sG\bigl(\kS_*\bigr)\bigl[-1\bigr] = \kk_s[-1]$.  Next, since $\sT$ is an
  equivalence of categories, we have:
  $
  \sT \circ \sS_A \cong \sS_{\mathbbm{E}} \circ \sT,
  $
  where $\sS_A$ and $\sS_{\mathbbm{E}}$ are Serre functors of $D^b(A-\mathsf{mod})$ and $D^b\bigl(\Coh(\mathbbm{E})\bigr)$, respectively.
  We have: $\sS_A(P_i) \cong I_i$ for any $1 \le i \le 3$. It follows from an explicit description of the Serre functor $\sS_\mathbbm{E}$ given by
  \cite[Theorem 3]{bd} that
  $$
  \sT(S_1) = \sT(I_1) \cong \sS_{\mathbbm{E}}(\kQ) \cong
  \left(
  \begin{array}{c}
  \widetilde\kO\\
  \kI
  \end{array}
  \right)[1].
  $$
 As a consequence, $\sP(S_1) \cong \sD\sG\left(\left(
  \begin{array}{c}
  \widetilde\kO\\
  \kI
  \end{array}
  \right)[1]\right) \cong \sG\left(\left(
  \begin{array}{c}
  \widetilde\kO\\
  \kI
  \end{array}
  \right)\right)[1] \cong \widetilde{\kO}[1].
  $
  \item It follows from \cite[Proposition 12]{bd} that the objects $\sT(W_\pm)$ belong to the kernel of the functor $\sD\sG$. Therefore,
  $\sP(W_\pm) = 0$.
 \item The remaining computations can be performed in an analogous way as in \cite[Section 7]{bd}. Let us just state the final answer:
$$
\sP(V_\pm) \cong \widetilde\kO,\; \sP(U_\pm) \cong \kk_s \; \mbox{\rm and}\; \sP(S_2) \cong \widetilde\kO(1),
$$
which imply the claim. \qed
\end{itemize}
\end{example}

\section{Skew--gentle algebras and matrix problems}\label{S:matrixproblems}
Let $A \subset H$ be an extension of finite--dimensional  algebras such that $I:= \mathsf{rad}(A) = \mathsf{rad}(H)$.
We denote $\bar{A} = A/I$ and $\bar{H} = H/I$.
For $C \in \bigl\{A, H, \bar{A}, \bar{H}\bigr\}$, let $C-\mathsf{pro}$ be the category of finite dimensional projective $C$--modules. Next, for
any $\ast \in \{+, -, b, \, \emptyset\}$, let
$\mathsf{Hot}^\ast\bigl(C-\mathsf{pro}\bigr)$ be the corresponding homotopy category.

\subsection{Category of triples}
Obviously, we have a pair of $\kk$--linear functors:
\begin{equation}\label{E:CommaCat}
\mathsf{Hot}^\ast\bigl(H-\mathsf{pro}\bigr)
 \xrightarrow{\bar{H}\otimes_{H}\,-\,}  \mathsf{Hot}^\ast\bigl(\bar{H}-\mathsf{pro}\bigr)  \xleftarrow{\bar{H}\otimes_{\bar{A}}\,-\,}
 \mathsf{Hot}^\ast\bigl(\bar{A}-\mathsf{pro}\bigr).
\end{equation}
\begin{definition}\label{D:triples}
The
\emph{category of triples} $\Tri^\ast(A)$ is  a full subcategory of the comma category associated with the pair of functors
$\bigl(\bar{H}\otimes_{\bar{A}}\,-\,, \bar{H}\otimes_{H}\,-\,\bigr)$. Namely, its objects
are  triples $(Y^\bu, V^\bu, \theta)$, where
\begin{itemize}
\item $Y^\bu$
is an object of $\mathsf{Hot}^\ast\bigl(H-\mathsf{pro}\bigr)$.
\item $V^\bu$ is an object of $\mathsf{Hot}^\ast\bigl(\bar{A}-\mathsf{pro}\bigr)$.
\item
$\bar{H} \otimes_{\bar{A}} V^\bu \stackrel{\theta}\lar \bar{H} \otimes_{H}
Y^\bu$ is an \emph{isomorphism} in $\mathsf{Hot}^\ast\bigl(\bar{H}-\mathsf{pro}\bigr)$ (called \emph{gluing map}).
\end{itemize}
A morphism $(Y_1^\bu, V_1^\bu, \theta_1) \lar (Y_2^\bu, V_2^\bu, \theta_2)$ in $\Tri^\ast(A)$ is given by a pair $(g, h)$, where
\begin{itemize}
\item $Y_1^\bu \stackrel{g}\lar Y_2^\bu$ is a morphism in  $\mathsf{Hot}^\ast\bigl(H-\mathsf{pro}\bigr)$  and
\item $V_1^\bu \stackrel{h}\lar V_2^\bu$ is a morphism in $\mathsf{Hot}^\ast\bigl(\bar{A}-\mathsf{pro}\bigr)$
\end{itemize}
such that
the following diagram
\begin{equation}\label{E:morphismstriples}
\begin{array}{c}
\xymatrix
{
\bar{H} \otimes_{\bar{A}} V_1^\bu \ar[rr]^-{\theta_1} \ar[d]_-{\mathsf{id} \otimes h} & &
\bar{H} \otimes_{H} Y_1^\bu \ar[d]^-{\mathsf{id} \otimes g}\\
\bar{H} \otimes_{\bar{A}} V_2^\bu \ar[rr]_-{\theta_2} & &
\bar{H} \otimes_{H} Y_2^\bu
}
\end{array}
\end{equation}
is commutative in $\mathsf{Hot}^\ast\bigl(\bar{H}-\mathsf{pro}\bigr)$.
\end{definition}

\begin{theorem}\label{T:mainconstr}
The functor $\mathsf{Hot}^\ast\bigl(A-\mathsf{pro}\bigr)
 \stackrel{\EE}\lar \Tri^\ast(A), \; X^\bu \stackrel{\EE}\longmapsto
\bigl(H \otimes_A X^\bu, \bar{A}\otimes_A X^\bu, \theta_{X^\bu}\bigr)
$, where  $\bar{H}\otimes_{\bar{A}}  ( \bar{A} \otimes_A X^\bu)
\stackrel{\theta_{X^\bu}}\lar \bar{H}\otimes_{H} (H \otimes_A X^\bu)$ is  the canonical isomorphism in the category $\mathsf{Hot}^\ast\bigl(\bar{H}-\mathsf{pro}\bigr)$, has the following properties:
\begin{itemize}
\item $\EE$ is full and essentially surjective.
\item $\EE(X_1^\bu) \cong \EE(X_2^\bu)$ if and only if $X_1^\bu \cong X_2^\bu$.
\item $\EE(X^\bu)$ is indecomposable if and only if $X^\bu$ is indecomposable.
\end{itemize}
\end{theorem}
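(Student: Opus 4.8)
The plan is to deduce the theorem from the classical Milnor patching description of projective modules over the conductor square $A = H \times_{\bar{H}} \bar{A}$, applied degreewise to complexes of projectives, and then to control the passage to the homotopy categories by means of minimal complexes. Note first that this really is a pull--back of $\kk$--algebras: since $I \subseteq A$ and $A/I = \bar{A}$, one has $A = \{h \in H \mid h \bmod I \in \bar{A}\}$, and $H \twoheadrightarrow \bar{H}$ is surjective, so Milnor's theorem applies and gives a functorial equivalence between $A-\mathsf{pro}$ and the category of triples $(P, \bar{P}, \varphi)$ with $P \in H-\mathsf{pro}$, $\bar{P} \in \bar{A}-\mathsf{pro}$ and $\varphi \colon \bar{H} \otimes_{\bar{A}} \bar{P} \xrightarrow{\ \sim\ } \bar{H} \otimes_H P$ an \emph{honest} isomorphism, sending $P$ to $(H \otimes_A P,\, \bar{A} \otimes_A P,\, \mathsf{can})$, precisely the module--level incarnation of $\EE$. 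Applying this equivalence degreewise yields an equivalence between $\mathsf{Com}^\ast(A-\mathsf{pro})$ and the category of triples $(Y^\bu, V^\bu, \theta)$ of complexes with $\theta$ an honest chain isomorphism and morphisms the pairs of chain maps making (\ref{E:morphismstriples}) commute on the nose. The other ingredient is that $A$, $H$, $\bar{H}$ are semiperfect, so every object of the relevant homotopy category is homotopy equivalent to a \emph{minimal} complex (differential with entries in the radical); since $\bar{A}$ and $\bar{H}$ are semisimple, minimal complexes over them have zero differential, while if $Y^\bu$ is a minimal complex over $H$ then $\bar{H} \otimes_H Y^\bu = Y^\bu/IY^\bu$ has zero differential, and $\bar{A} \otimes_A X^\bu = X^\bu/IX^\bu$ for every complex $X^\bu$ over $A$.

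With these in hand I would first prove essential surjectivity: given $(Y^\bu, V^\bu, \theta) \in \Tri^\ast(A)$, replace $Y^\bu$ by a minimal complex over $H$ and $V^\bu$ by a zero--differential complex over $\bar{A}$ in their homotopy classes, transporting $\theta$ accordingly; then $\theta$ becomes a homotopy equivalence between zero--differential complexes, hence a degreewise isomorphism, so the triple lies in the image of the degreewise Milnor equivalence and equals $\EE(X^\bu)$ for the patched complex $X^\bu$. For fullness, let $(g,h)\colon \EE(X_1^\bu) \to \EE(X_2^\bu)$ be a morphism in $\Tri^\ast(A)$, pick chain--map representatives, and let $s$ be a homotopy witnessing that (\ref{E:morphismstriples}) commutes up to homotopy. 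Since the terms of $Y_i^\bu := H\otimes_A X_i^\bu$ are projective $H$--modules and $Y_2^\bu \twoheadrightarrow \bar{H}\otimes_H Y_2^\bu$ is surjective, the degree $-1$ map obtained from $s$ via $\theta_1$ lifts to a degreewise $H$--linear $\sigma\colon Y_1^\bu \to Y_2^\bu$; replacing $g$ by the homotopic chain map $g + d\sigma + \sigma d$ makes the square commute strictly, and then the degreewise Milnor equivalence produces a chain map $f\colon X_1^\bu\to X_2^\bu$ over $A$ with $\EE(f) = (g,h)$ in $\Tri^\ast(A)$.

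The remaining two assertions rest on one observation: if $X_1^\bu, X_2^\bu$ are minimal and $\bar{A}\otimes_A f$ is a homotopy equivalence for some chain map $f\colon X_1^\bu\to X_2^\bu$, then $\bar{A}\otimes_A f$ is a degreewise isomorphism of zero--differential complexes, so $f$ is an isomorphism modulo $I = \mathsf{rad}(A)$ in each degree, hence, by Nakayama's lemma applied to the finite--dimensional projective modules involved, a degreewise isomorphism, hence an isomorphism in $\mathsf{Hot}^\ast(A-\mathsf{pro})$. Thus $\EE$ reflects isomorphism classes: after replacing by minimal models, lift an isomorphism $\EE(X_1^\bu)\cong\EE(X_2^\bu)$ to $\EE(f)$ by fullness; being an isomorphism of triples it has an isomorphic $\bar{A}$--component $\bar{A}\otimes_A f$, whence $f$ is an isomorphism. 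Likewise, for a minimal $X^\bu$ that is not contractible one has $X^\bu/IX^\bu \neq 0$ by Nakayama, so $\EE(X^\bu) \neq 0$. Since $\EE$ is additive, these facts give both directions of the indecomposability statement: a nontrivial direct sum decomposition of $X^\bu$ maps to one of $\EE(X^\bu)$ (the summands stay nonzero), and conversely a nontrivial decomposition $\EE(X^\bu)\cong T_1\oplus T_2$ is, by essential surjectivity, of the form $\EE(Z_1^\bu)\oplus\EE(Z_2^\bu)\cong\EE(Z_1^\bu\oplus Z_2^\bu)$ with $Z_i^\bu$ not contractible, whence $X^\bu\cong Z_1^\bu\oplus Z_2^\bu$ nontrivially by the reflection of isomorphism classes.

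I expect the main obstacle to be fullness: Milnor patching only compares honest complexes and strictly commuting squares, so one must rectify the merely homotopy--commutative square (\ref{E:morphismstriples}) to a strict one, and this rectification has to be carried out over $H$, where there is no semisimplicity to exploit. It works only because the components $H\otimes_A X_i^\bu$ consist of projective modules, which is exactly what allows the correcting homotopy to be lifted. A secondary technical point is the existence of minimal models in the unbounded case $\ast = \emptyset$; this is routine for complexes of projective modules over a finite--dimensional algebra but should be stated with some care.
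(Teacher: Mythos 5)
Your proposal is correct and follows essentially the same route as the paper: Milnor patching over the conductor square $A = H \times_{\bar{H}} \bar{A}$ at the level of projective modules (the paper's Step 1), passage to minimal complexes so that all reductions modulo $I$ have zero differentials and homotopy classes of gluing maps have unique representatives (Steps 2--3), and the kernel/Nakayama arguments for fullness, reflection of isomorphisms and indecomposability (Steps 3--4). The only cosmetic difference is your rectification of the homotopy--commutative square in the fullness argument, which becomes vacuous once minimal models are chosen, since homotopies between zero--differential complexes contribute nothing.
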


\begin{proof} We give an update of the proof of \cite[Theorem 2.4]{Nodal}, where this result was stated and proven for $* \in \{b, -\}$.

\smallskip
\noindent
\underline{Step 1}.
Consider first the category of triples $\mathsf{tri}(A)$ attached to the pair of $\kk$--linear functors
\begin{equation}\label{E:CommaCatTwo}
H-\mathsf{pro}
 \xrightarrow{\bar{H}\otimes_{H}\,-\,}  \bar{H}-\mathsf{pro} \xleftarrow{\bar{H}\otimes_{\bar{A}}\,-\,}
 \bar{A}-\mathsf{pro}.
\end{equation}
We claim that the functor $$A-\mathsf{pro} \stackrel{\EE}\lar \mathsf{tri}(A), \; P \mapsto \bigl(H \otimes_A P, \bar{A} \otimes_A P, \theta_{P}\bigr)$$
is an equivalence of categories. Indeed, let $(Q, V, \theta)$ be on object of $\mathsf{tri}(A)$. Then we define an $A$--module $X$ via the follwing pull--back diagram in the category of $A$--modules:
\begin{equation}\label{E:reconstrPrelim}
\begin{array}{c}
\xymatrix
{ 0 \ar[r] & I Q  \ar[r] \ar@{=}[d] & X \ar[r]^p \ar[d]_\imath   & V
\ar[d]^-{\widetilde{\theta}} \ar[r] & 0 \\
0 \ar[r] & I Q  \ar[r] & Q
\ar[r]^{\pi} & \bar{Q}
\ar[r] & 0,
}
\end{array}
\end{equation}
where $\widetilde\theta$ is the morphism corresponding to the gluing map $\theta$ via the adjunction isomorphism
$\Hom_{\bar{H}}\bigl(\bar{H}\otimes_{\bar{A}} V, \bar{Q}\bigr) \cong \Hom_{\bar{A}}\bigl(V, \bar{Q}\bigr)$. Equivalently, we may define $X$ via the short exact sequence
\begin{equation}\label{E:exactseqreconstr}
0 \lar X \lar Q  \oplus V  \xrightarrow{(-\pi \; \tilde\theta)} \bar{Q} \lar 0.
\end{equation}
The universal property of kernel (or pull--back) implies that the assignment
$$
\mathsf{tri}(A) \stackrel{\sfK}\lar A-\mathsf{mod}, \; (Q, V, \theta) \mapsto X
$$
is in fact functorial. Consider the triple $T_A := (H, \bar{A}, \vartheta_A)$, where $\vartheta_A$ is the canonical isomorphism $\bar{H} \otimes_{\bar{A}}
\bar{A} \lar \bar{H} \otimes_H H$. Obviously,
$\EE(A) \cong T_A$ and $\sfK(T_A) \cong A$. In can be shown  that $\mathsf{tri}(A) = \mathsf{add}\bigl(T_A\bigr)$. Hence, $\EE$ and $\sfK$ are quasi--inverse equivalences of categories.

\smallskip
\noindent
\underline{Step 2}.
Let $(Y^\bu, V^\bu, \theta)$
be an object of $\Tri^\ast(A)$.  Without loss of generality we may assume that both complexes $Y^\bu$ and $V^\bu$ are minimal. Since
the algebra $\bar{A}$ is semi--simple,
\begin{equation}\label{E:complexV}
V^\bu = \bigl( \dots \stackrel{0}\lar V^{r-1} \stackrel{0}\lar V^{r} \stackrel{0}\lar V^{r+1} \stackrel{0}\lar \dots \bigr)
\end{equation}
is a complex with zero differentials. Since the differentials of the complex $Y^\bu$ have coefficients in $I$, the complex $\bar{Y}^\bu:= \bar{H}\otimes_H Y^\bu$ has   zero differentials, too. For any $r \in \ZZ$, consider the $r$--th component of the gluing morphism $\theta$
\begin{equation}\label{E:CompnentRofTheta}
\theta^r: \;  \bar{H} \otimes_{\bar{A}} V^r \lar \bar{H} \otimes_H Y^r.
\end{equation}
Because of the minimality assumption on complexes $Y^\bu$ and $V^\bu$, there is precisely one representative in the homotopy class of $\theta$.
In other words, $\theta$ can be identified with the collection of its components $\bigl\{\theta^r \bigr\}_{r \in \ZZ}$, which are isomorphisms
in the category $\bar{H}-\mathsf{mod}$.

\smallskip
\noindent
Consider the following commutative diagram in the \emph{abelian} category
$\mathsf{Com}^\ast(A-\mathsf{mod})$:
\begin{equation}\label{E:reconstr}
\begin{array}{c}
\xymatrix
{ 0 \ar[r] & I Y^\bu  \ar[r] \ar@{=}[d] & X^\bu \ar[r]^p \ar[d]_\imath   & V^\bu
\ar[d]^-{\widetilde{\theta}} \ar[r] & 0 \\
0 \ar[r] & I Y^\bu  \ar[r] & Y^\bu
\ar[r]^{\pi} & \bar{Y}^\bu
\ar[r] & 0,
}
\end{array}
\end{equation}
where $\widetilde\theta$ is the morphism corresponding to $\theta$ under the adjunction
$$
\Hom_{\mathsf{Com}^\ast(\bar{H})}\bigl(\bar{H} \otimes_{\bar{A}} V^\bu, \bar{H} \otimes_H Y^\bu\bigr) \cong \Hom_{\mathsf{Com}^\ast(\bar{A})}\bigl(V^\bu, \bar{H} \otimes_H Y^\bu\bigr).
$$
Equivalently, we have the following  short exact sequence in $\mathsf{Com}^\ast(A-\mathsf{mod})$:
\begin{equation}\label{E:triangle1}
0 \lar X^\bu \lar Y^\bu \oplus V^\bu \xrightarrow{(-\pi \; \tilde\theta)} \bar{Y}^\bu \lar 0.
\end{equation}
In this way we get an assignment (but not a functor!): $$\Ob\bigl(\Tri^\ast(A)\bigr) \stackrel{\sfK}\lar \Ob\bigl(\mathsf{Com}^\ast(A-\mathsf{mod})\bigr),
\quad  (Y^\bu, V^\bu, \theta) \stackrel{\sfK}\mapsto X^\bu.$$

\noindent
\underline{Step 3}. We claim that the following statements are true.
\begin{itemize}
\item $X^\bu$ is a minimal bounded complex of projective $A$--modules.
\item The assignment $\sfK$ maps isomorphic objects of the category  $\Tri^\ast(A)$ into isomorphic objects of $\mathsf{Com}^\ast(A-\mathsf{mod})$.
\item We have an isomorphism $\EE(X^\bu) \cong (Y^\bu, V^\bu, \theta)$, induced by the morphisms $p$ and $\imath$ through the  corresponding adjunctions.
\end{itemize}
Let $(Y_1^\bu, V_1^\bu, \theta_1) \xrightarrow{(g, h)}  (Y_2^\bu, V_2^\bu, \theta_2)$ be   a morphism in $\Tri^\ast(A)$. As above, we may assume that
$Y_i^\bu$ and $V_i^\bu$ are minimal for $i = 1, 2$. Therefore, $V_1^\bu \stackrel{h}\lar V_2^\bu$ has a unique representative in its homotopy class.
Assume now that the morphism $(g, h)$ is an isomorphism. Let $Y_1^\bu \stackrel{\widetilde{g}}\lar Y_2^\bu$ be any representative of $g$ in
$\mathsf{Com}^\ast(H-\mathsf{mod})$. Then both morphisms  $h$ and $\widetilde{g}$ are isomorphisms in the corresponding categories of complexes.
Let $\bar{Y}_i^\bu := \bar{H} \otimes_{\bar{A}} Y_i^\bu$ for $i = 1,2$ and $\bar{Y}_1^\bu \stackrel{\bar{g}}\lar \bar{Y}_2^\bu$ be the morphism of complexes, induced
by $\widetilde{g}$. Because of the minimality, $\bar{g}$ does not depend on a particular choice of the lift $\widetilde{g}$.
By the definition of morphisms in the category $\Tri^\ast(A)$,  the following
diagram
\begin{equation}\label{E:CommDiagHot}
\begin{array}{c}
\xymatrix
{
V_1^\bu \ar[rr]^-{\widetilde{\theta}_1} \ar[d]_-{h} & &
 \bar{Y}_1^\bu \ar[d]^-{\bar{g}}\\
V_2^\bu \ar[rr]^-{\widetilde{\theta}_2} & &
\bar{Y}_2^\bu
}
\end{array}
\end{equation}
is commutative in the homotopy category $\mathsf{Hot}^\ast\bigl(\bar{A}-\mathsf{pro}\bigr)$. Since all complexes in diagram (\ref{E:CommDiagHot})  have zero differentials, we actually have a commutative diagram in the category $\mathsf{Com}^\ast(\bar{A}-\mathsf{mod})$, hence
in the category $\mathsf{Com}^\ast(A-\mathsf{mod})$. Using the universal property of  kernel applied to (\ref{E:triangle1}), we conclude  that the pair $(\widetilde{g}, h)$ induces an isomorphism  $X_1^\bu \stackrel{\widetilde{f}}\lar X_2^\bu$ in the category $\mathsf{Com}^\ast(A-\mathsf{mod})$. The claim follows
now from Step 1, applied to the components of the complexes in question.

\smallskip
\noindent
\underline{Step 4}. We have seen  that $(\sfK \circ \EE) (X^\bu) \cong X^\bu$ for any $X^\bu \in \Ob\bigl(\mathsf{Hot}^\ast(A-\mathsf{pro})\bigr)$ as well as
$(\EE \circ \sfK)  (Y^\bu, V^\bu, \theta) \cong (Y^\bu, V^\bu, \theta)$ for any $(Y^\bu, V^\bu, \theta) \in \Ob\bigl(\Tri^\ast(A)\bigr)$. This implies that
the functor $\EE$ is essentially surjective and reflects indecomposability and isomorphism classes of objects.
To show that $\EE$ is full, consider a morphism $(Y_1^\bu, V_1^\bu, \theta_1) \xrightarrow{(g, h)}  (Y_2^\bu, V_2^\bu, \theta_2)$  as in Step 3.
Let $Y_1^\bu \stackrel{\widetilde{g}}\lar  Y_2^\bu$ be a  representative of the homotopy class $g$. If $X_i^\bu := \sfK(Y_i^\bu, V_i^\bu, \theta_i)$ for $i = 1, 2$, then the universal property of kernel yields a uniquely  morphism of complexes $X_1^\bu \stackrel{\widetilde{f}}\lar X_2^\bu$, induced by the pair
$(\widetilde{g}, h)$. If $f$ denotes  the homotopy class of $\widetilde{f}$ then we get a commutative diagram
$$
\xymatrix{
\EE(X_1^\bu) \ar[r]^{\EE(f)} \ar[d]_{\cong} & \EE(X_2^\bu) \ar[d]^\cong\\
(Y_1^\bu, V_1^\bu, \theta_1) \ar[r]^{(g, h)} & (Y_2^\bu, V_2^\bu, \theta_2)
}
$$
in the category $\Tri^\ast(A)$, implying the statement.
\end{proof}

\subsection{Category of triples and the corresponding matrix problem}\label{SS:TriplesandMP} Now, let us return to the setting of
 Section \ref{S:Generalities}.
Let $(\vec{\sm}, \simeq)$ be   a datum as in Definition \ref{D:skewgentle}, $A = A(\vec{\sm}, \simeq)$ be the corresponding skew--gentle algebra,
$H = H(\vec{\sm}, \simeq)$ its normalization, $I = \mathsf{rad}(A) = \mathsf{rad}(H)$, $\bar{A} = A/I$ and $\bar{H} = H/I$.
Let $\Omega = \Omega(\vec{\sm})$ be the set given by (\ref{E:SetOmega}), whereas the sets $\overline{\Omega}$  and $\widetilde{\Omega}$ are the ones
from Definition \ref{D:Sets}.
In order to make the category $\Tri^\ast(A)$ more accessible, we begin  with the following observations.

\smallskip
\noindent
1.~We have a decomposition
\begin{equation}\label{E:barH}
\bar{H} \cong \prod\limits_{(i, j) \in \Omega} \bar{H}_{(i, j)}, \quad \mbox{\rm where} \quad \bar{H}_{(i, j)} \cong
\left\{
\begin{array}{cl}
\mathsf{Mat}_2(\kk) & \mbox{\rm if} \; (i, j) \simeq (i, j) \\
\kk & \mbox{\rm if} \; (i, j) \not\simeq (i, j).
\end{array}
\right.
\end{equation}
In particular, each  algebra $\bar{H}_{(i, j)}$ is equal or Morita equivalent to the field $\kk$.

\smallskip
\noindent
Primitive idempotents of the algebra $H$ are parameterized by the elements of the set $\overline{\Omega}$. Namely, for each element $(i, j) \in \Omega$ such that
$(i, j) \simeq (i, j)$, we have two \emph{conjugate} idempotents $e_{((i, j), \pm)} \in H$ in the $j$--th diagonal block of the algebra $H_i$, corresponding  to the matrices
$
\left(
\begin{array}{cc}
1 & 0 \\
0 & 0
\end{array}
\right)
$
and $
\left(
\begin{array}{cc}
0 & 0 \\
0 & 1
\end{array}
\right)
$
with respect to the decomposition (\ref{E:2times2block}). The corresponding indecomposable projective $H$--modules
$H \cdot e_{((i, j), +)}$ and $H \cdot e_{((i, j), -)}$ are isomorphic. Summing up, the isomorphism classes of indecomposable projective $H$--modules
are parameterized by the elements of the set $\Omega$. So, for any $(i, j) \in \Omega$, we denote by
\begin{itemize} \item $Q_{(i, j)}$ the corresponding indecomposable projective $H$--module
\item $\bar{Q}_{(i, j)} := \bar{H} \otimes_H Q_{(i, j)}$.
\end{itemize}
Note that
$
\bar{H}_{(i, j)} \cong
\left\{
\begin{array}{cl}
 \bar{Q}_{(i, j)} \oplus \bar{Q}_{(i, j)} & \mbox{\rm if} \; (i, j) \simeq (i, j) \\
\bar{Q}_{(i, j)} & \mbox{\rm if} \; (i, j) \not\simeq (i, j).
\end{array}
\right.
$
\smallskip
\noindent
In what follows, it will be convenient to put $Q_{(i, 0)} = Q_{(i, m_i+1)} = 0$ for any $1 \le i \le t$.

\smallskip
\noindent
2.~Note that the  algebra $A$ is basic. The primitive idempotents of $A$ as well as the corresponding indecomposable projective $A$--modules are parameterized by the elements of the set $\widetilde\Omega$. So, for any $\gamma \in \widetilde\Omega$, let $e_\gamma \in A$ be the corresponding primitive idempotent and  $P_\gamma := A \cdot e_\gamma$ the corresponding indecomposable projective $A$--module. Obviuosly, we have a decomposition
\begin{equation}
\bar{A} \cong \prod\limits_{\gamma \in \widetilde\Omega} \bar{A}_{\gamma}, \quad \mbox{\rm where} \quad \bar{A}_\gamma \cong \kk
\quad \mbox{\rm for any} \quad \gamma \in \widetilde\Omega.
\end{equation}
Moreover, there exists  the following natural isomorphisms of $\bar{H}$--modules:
$$
\bar{H}\otimes_{\bar{A}} \bar{A}_\gamma \cong
\left\{
\begin{array}{cl}
\bar{Q}_{(i, j)} \oplus \bar{Q}_{(k, l)}  & \mbox{\rm if} \; \gamma = \{(i, j), (k,l)\} \; \mbox{\rm is of the first type} \\
\bar{Q}_{(i, j)} & \mbox{\rm if} \; \gamma =
\left\{
\begin{array}{cl}
((i, j), \pm) & \mbox{\rm is of the second type} \\
(i, j) & \mbox{\rm is of the third type}.
\end{array}
\right.
\end{array}
\right.
$$

\smallskip
\noindent
3.~For any $1 \le i \le t$ and $1 \le b < a \le m_i+1$, consider the indecomposable complex of projective $H$--modules
\begin{equation}\label{E:ComplexW}
W^\bu_{(i, (a, b))}:= \bigl(\dots \lar 0 \lar Q_{(i, a)} \xrightarrow{\kappa_{(i, (a, b))}}  Q_{(i, b)} \lar 0 \lar \dots\bigr),
\end{equation}
where its unique cohomology $W_{(i, (a, b))} := Q_{(i, b)}/Q_{(i, a)}$  is located in the zero degree. Within our convention, we have:  $Q_{(i, m_i+1)} = 0$, hence $W^\bu_{(i, (m_i+1, b))} = Q_{(i, b)}[0]$.
Note that for any $1 \le i \le t$ and $1 \le b < a \le m_i$, there exists a \emph{unique} (up to a scalar) non--zero morphism $\kappa_{(i, (a, b))}$ from $Q_{(i, a)}$ to
$Q_{(i, b)}$. Therefore, the notation for the differential in (\ref{E:ComplexW}) will be  omitted in what follows. Note that
\begin{equation}\label{E:splittingcompl}
\bar{H} \otimes_H W^\bu_{(i, (a, b))}:= \bigl(\dots \lar 0 \lar \bar{Q}_{(i, a)} \stackrel{0}\lar \bar{Q}_{(i, b)} \lar 0 \lar \dots\bigr).
\end{equation}

\smallskip
\noindent
4.~Let $Y^\bu  = \bigl(\dots \lar Y^{p-1} \lar Y^p \lar Y^{p+1} \lar \dots \bigr) \in \Ob\bigl(\mathsf{Hot}^\ast(H-\mathsf{pro})\bigr)$ be a minimal complex of projective $H$--modules. Since the algebra $H$ is hereditary,   we have a splitting
\begin{equation}\label{E:ComplexSumOfW}
Y^\bu \cong \bigoplus\limits_{r \in \ZZ} \left(
\bigoplus_{\substack{1 \le i \le t \\ 1 \le b < a \le m_i+1}} \Bigl(W_{(i, (a, b))}^\bu[-r]\Bigr)^{\oplus l(i, (a, b), r)}\right)
\end{equation}
for some uniquely determined multiplicities $l(i, (a, b), r) \in \NN_0$. At this place, we use the classification of indecomposable representations of a quiver of type $\bul \lar \bul \lar  \dots \lar \bul \lar \bul$ as well as a result of Dold \cite[Satz 4.7]{Dold} on the structure of the derived category of a hereditary algebra.
 Of course, for $* = b$, we additionally have:
$l(i, (a, b), r) = 0$ for all $|r| \gg 0$, whereas for $* = \pm$, we have $l(i, (a, b), r) = 0$  for $r \ll 0$ or $r \gg 0$, respectively.

\smallskip
\noindent
Since the complex $Y^\bu$ is minimal, we get:
$$
\bar{Y}^\bu := \bar{H} \otimes_H Y^\bu \cong \bigl(\dots \stackrel{0}\lar \bar{Y}^{r-1} \stackrel{0}\lar \bar{Y}^r \stackrel{0}\lar \bar{Y}^{r+1} \stackrel{0}\lar \dots \bigr).
$$
Next, observe that
for any $r \in \ZZ$, we have an isomorphism of $\bar{H}$--modules
\begin{equation}\label{E:moduleYr}
\bar{Y}^r \cong \bigoplus\limits_{(i, j) \in \Omega} \bar{Q}_{(i, j)}^{m(r, (i, j))},
\end{equation}
induced by the decomposition (\ref{E:ComplexSumOfW}) and isomorphisms (\ref{E:splittingcompl}).

\smallskip
\noindent
5.~Let $V^\bu \in \Ob\bigl(\mathsf{Hot}^\ast(\bar{A}-\mathsf{pro})\bigr)$ be as in (\ref{E:complexV}). For any $r \in \ZZ$, we have decompositions
\begin{equation}\label{E:moduleVr}
V^r \cong \bigoplus\limits_{\gamma \in \widetilde\Omega} \bar{A}_\gamma^{\oplus n(r, \gamma)}
\end{equation}
for some uniquely determined multiplicities $n(r, \gamma) \in \NN_0$.

\smallskip
\noindent
6.~Let $(Y^\bu, V^\bu, \theta)$ be an object of the category of triples $\Tri^\ast(A)$. As in the proof of Theorem \ref{T:mainconstr}, we may assume
both complexes $Y^\bu$ and $V^\bu$ to be minimal. Moreover, we may fix direct sum decompositions (\ref{E:ComplexSumOfW}) and (\ref{E:moduleVr}).
Recall that the  gluing map $\bar{H} \otimes_{\bar{A}} V^\bu \stackrel{\theta}\lar
\bar{H} \otimes_{H} Y^\bu$ can be identified with  the  collection of its components  $\left\{\bar{H} \otimes_{\bar{A}} V^r \stackrel{\theta^r}\lar \bar{Y}^r\right\}_{r \in \ZZ}$. Since $\theta$ is an isomorphism, all morphisms of $\bar{H}$--modules $\theta^r$ are isomorphisms, too. According to the decomposition
(\ref{E:barH}), each isomorphism $\theta^r$ is itself determined  by a collection of its components $\left\{\theta^r_{(i, j)}\right\}_{(i, j) \in \Omega}$, which are isomorphisms of $\bar{H}_{(i, j)}$--modules.
Since each algebra $\bar{H}_{(i, j)}$ is Morita equivalent to $\kk$,  we can represent each morphism $\theta^r_{(i, j)}$ by an invertible  matrix $\Theta_{(i, j)}^r \in \mathsf{Mat}_{m(r, (i, j))}(\kk)$, where $m(r, (i, j))$ is the multiplicity defined by the decomposition (\ref{E:moduleYr}). Moreover, in virtue of  the  decompositions (\ref{E:ComplexSumOfW}),  the matrix
$\Theta_{(i, j)}^r$ has  an induced division into \emph{horizontal} stripes. As we shall see below, in the case $(i, j) \simeq (i, j)$, the matrices
$\Theta_{(i, j)}^r$ have additional natural divisions into two \emph{vertical} stripes.

\smallskip
\noindent
7.~Let $(i, j) \in \Omega$ be such that $(i, j) \simeq (i, j)$. According to (\ref{E:moduleVr}), the $\bar{A}$--module $V^r$ has  two direct summands $\bar{A}_{((i, j), \pm)}^{n(r, ((i, j), \pm))}$ and  we get  an induced isomorphism:
$$
\bar{H} \otimes_{\bar{A}} \left(\bar{A}_{((i, j), +)}^{\oplus n(r, ((i, j), +))} \oplus \bar{A}_{((i, j), -)}^{\oplus n(r, ((i, j), -))}\right) \cong
Q_{(i, j)}^{\oplus m(r, (i, j))}.
$$
Note that since  $\theta^r_{(i, j)}$ is an isomorphism, we automatically have: $$n\bigl(r, ((i, j), +))\bigr) + n\bigl(r, ((i, j), -))\bigr) = m(r, (i, j)).$$
Therefore, the matrix $\Theta^r_{(i, j)}$ is divided into two vertical stripes, labelled by the symbols $+$ and $-$, and having $n\bigl(r, ((i, j), +))\bigr)$ respectively  $n\bigl(r, ((i, j), -))\bigr)$ columns.

\smallskip
\noindent
Moreover, each matrix $\Theta^r_{(i, j)}$ has the following  division into horizontal stripes. First note that the only
complexes $W^\bu_{(i, (a, b))}[r]$,  which contribute to the $\bar{H}_{(i, j)}$--module $\bar{Y}^r_{(i, j)}$, are contained in to following diagram of morphisms
in $\mathsf{Hot}^\ast(H-\mathsf{mod})$:
\begin{equation}\label{E:hierarchyofmorphisms}
\begin{array}{c}
\xymatrix{
& Q_{(i, j)} \ar[r] \ar[d] & Q_{(i, j-1)} \ar[d] \\
& \dots \ar[d] & \dots \ar[d]\\
& Q_{(i, j)} \ar[r] \ar[d]& Q_{(i, 1)} \\
& Q_{(i, j)} \ar[d]& \\
Q_{(i, m_i)} \ar[r]\ar[d] & Q_{(i, j)} \ar[d] & \\
\dots \ar[d] & \dots \ar[d] & \\
Q_{(i, j+1)} \ar[r] & Q_{(i, j)} &
}
\end{array}
\end{equation}
where the component $Q_{(i, j)}$ of each complex is located in the  $r$--th degree.  Moreover, the displayed morphisms are the only ones, whose $(r, (i,j))$--th
component is non--zero after applying the functor $\bar{H} \otimes_H \,-\,$.

\smallskip
\noindent
 Next, for any $r \in \ZZ$, $1 \le i \le t$  and $1 \le b < a \le m_i$ we introduce a \emph{pair} of symbols $q_{((i, b), r)}^{((i, a), r-1)}$ and $q^{((i, b), r)}_{((i, a), r-1)}$, which encode the complex $W^\bu_{(i, (a, b))}[-r]$. Additionally, we introduce a \emph{single} symbol
$q_{((i, b), r)}^{((i, m_i+1), r-1)}$, which encodes the stalk complex $Q_{(i, b)}[-r] = W^\bu_{(i, (m_i+1, b))}[r]$. Then for  any $r \in \ZZ$ and $(i, j) \in \Omega$, we have a totally ordered set $\dF_{((i, j), r)}$, whose elements are
\begin{equation}\label{E:FirstOrder}
q_{((i, j), r)}^{((i, 1), r+1)} < \dots < q_{((i, j), r)}^{((i, j-1), r+1)} < q_{((i, j), r)}^{((i, m_i+1), r-1)} <
q_{((i, j), r)}^{((i, m_i), r-1)} < \dots < q_{((i, j), r)}^{((i, j-1), r-1)}.
\end{equation}
Then the  horizontal stripes  of the matrices $\Theta^r_{(i, j)}$ are labelled by the elements of $\dF_{(r, (i, j))}$.
In the notations of  the decomposition (\ref{E:ComplexSumOfW}), we see that  the horizontal stripe of $\Theta^r_{(i, j)}$  labelled by the symbol $q_{((i, j), r)}^{((i, k), r\pm 1)}$ has
\begin{itemize}
\item $l\bigl((i, (k, j)), r\bigr)$ rows, if $1 \le j < k \le m_{i+1}$,
\item $l\bigl((i, (j, k)), r-1\bigr)$ rows, if $1 \le k < j \le m_{i}$.
\end{itemize}

\subsection{Category $\Tri^\ast(A)$ and the corresponding matrix problem}\label{SS:TriplesMP} Let us make a brief overview  of the results obtained in the previous subsections.

\smallskip
\noindent
1.~According to Theorem \ref{T:mainconstr}, we have a functor $\mathsf{Hot}^\ast(A) \stackrel{\EE}\lar \Tri^\ast(A)$, reflecting indecomposability and isomorphism classes of objects. Therefore, having a classification of the indecomposable objects of the category of triples, we can deduce from it a description
of the indecomposable objects of $\mathsf{Hot}^\ast(A)$.

\smallskip
\noindent
2.~Let $(Y^\bu, V^\bu, \theta)$ be an object of $\Tri^\ast(A)$. Fixing decompositions (\ref{E:ComplexSumOfW}) and (\ref{E:moduleVr}), we can attach
to $(Y^\bu, V^\bu, \theta)$ a collection of square and invertible matrices $\bigl\{\Theta_{(i, j)}^r\bigr\}_{((i, j), r) \in \Omega \times \ZZ}$.
Each matrix $\Theta_{(i, j)}^r$ is divided into horizontal stripes, labelled by the elements of the set $\dF_{((i, j), r)}$. Moreover,
if $(i, j) \in \Omega$ is such that $(i, j) \simeq (i, j)$ then $\Theta_{(i, j)}^r$ is divided into two vertical stripes, labelled by the symbols $\pm$.
Finally, note that
\begin{itemize}
\item Whenever  $(i, j) \ne (k, l) \in \Omega$ are such that $(i, j) \simeq (k, l)$ then the matrices $\Theta_{(i, j)}^r$ and $\Theta_{(k, l)}^r$ have the same number of columns (hence, the same size) for any $r \in \ZZ$.
   \item For any $r \in \ZZ$, $1 \le i \le t$  and $1 \le b < a \le m_i$ the horizontal stripe of the matrix $\Theta_{(i, b)}^r$ labelled  by $q_{((i, b), r)}^{((i, a), r-1)}$  has the same number of rows as the horizontal stripe of the matrix $\Theta_{(i, a)}^{r-1}$ labelled by  $q^{((i, b), r)}_{((i, a), r-1)}$.
\end{itemize}
The described block structure put on the collection  of matrices $\bigl\{\Theta_{(i, j)}^r\bigr\}_{((i, j), r) \in \Omega \times \ZZ}$ will be called \emph{decoration}.

\smallskip
\noindent
3.~Conversely, assume we are given a collection of invertible  matrices $\bigl\{\Theta_{(i, j)}^r\bigr\}_{((i, j), r) \in \Omega \times \ZZ}$ equipped with a decoration, i.e.~a row (and possibly column) division as in the previous paragraph. Then the sizes of the corresponding blocks allow to reconstruct the multiplicities
(\ref{E:ComplexSumOfW}) and (\ref{E:moduleVr}). In this way, we get an object  $(Y^\bu, V^\bu, \theta)$ of the category  $\Tri^\ast(A)$.

\smallskip
\noindent
4.~The description of the isomorphism classes of objects of the category $\Tri^\ast(A)$ leads to a certain problem of linear algebra (matrix problem).
Assume we have two triples $(Y^\bu, V^\bu, \theta)$ and $(\widehat{Y}^\bu, \widehat{V}^\bu, \widehat\theta)$.  If they are isomorphic, then we have:
$Y^\bu \cong \widehat{Y}^\bu$ in $\mathsf{Hot}^\ast(H-\mathsf{pro})$ and $V^\bu \cong \widehat{V}^\bu$ in $\mathsf{Hot}^\ast(\bar{A}-\mathsf{pro})$.
So, we may assume that $Y^\bu = \widehat{Y}^\bu$ and $V^\bu =  \widehat{V}^\bu$ and moreover, the decompositions (\ref{E:ComplexSumOfW}) and (\ref{E:moduleVr}) are fixed. By the definition of morphisms in the category $\Tri^\ast(A)$, we have $(Y^\bu, V^\bu, \theta) \cong (Y^\bu, V^\bu, \widehat\theta)$ if and only if there exist $g \in \Aut_{\mathsf{Hot}^\ast(H-\mathsf{pro})}\bigl(Y^\bu\bigr)$ and $h \in \Aut_{\mathsf{Hot}^\ast(\bar{A}-\mathsf{pro})}\bigl(V^\bu\bigr)$
such that
$
\bar{g} \theta = \widehat{\theta} \bar{h}
$
where $\bar{g} = \mathsf{id} \otimes g$ and $\bar{h} = \mathsf{id} \otimes g$ are the induced maps (see diagram (\ref{E:morphismstriples})).
It is clear, that $\bar{g}$ and $\bar{h}$ are given by a collection of its components $\bigl\{\bar{g}_{(i, j)}^r\bigr\}_{((i,j), r) \in \Omega \times \ZZ}$ and
$\bigl\{\bar{h}_\gamma^r\bigr\}_{(\gamma, r)  \in \widetilde\Omega \times \ZZ}$, which are automorphisms of the $\bar{H}_{(i, j)}$--module $\bar{Y}^r_{(i, j)}$ and
$\bar{A}_\gamma$--module $V^r_{\gamma}$, respectively.

\smallskip
\noindent
5.~Let $\bigl\{\Theta_{(i, j)}^r\bigr\}_{((i, j), r) \in \Omega \times \ZZ}$ be the collection of decorated matrices corresponding to the triple $(Y^\bu, V^\bu, \theta)$ and
$\bigl\{\widehat\Theta_{(i, j)}^r\bigr\}_{((i, j), r) \in \Omega \times \ZZ}$ be the collection  corresponding to $(Y^\bu, V^\bu, \widehat{\theta})$.
Then the relation $\bar{g} \theta = \widehat{\theta} \bar{h}$ can be rewritten as a system of equalities
\begin{equation}\label{E:transfrule}
\Phi^r_{(i, j)} \Theta^r_{(i, j)} = \widehat{\Theta}^r_{(i, j)} \Psi^r_{(i, j)} \quad \mbox{\rm for each}\; ((i, j), r) \in \Omega \times \ZZ,
\end{equation}
where $\Phi^r_{(i, j)}$ and $\Psi^r_{(i, j)}$ are invertible matrices satisfying the following conditions.
\begin{enumerate}
\item If $(i, j) \ne (k, l) \in \Omega$ are such that $(i, j) \simeq (k, l)$ then $\Psi^r_{(i, j)} = \Psi^r_{(k, l)}$ for any $r \in \ZZ$.
\item If $(i, j) \simeq (i, j)$ then $$
\Psi^r_{(i, j)} =
\left(
\begin{array}{cc}
\Psi_{((i, j), +)}^r & 0 \\
0 & \Psi_{((i, j), -)}^r
\end{array}
\right),
$$
where each block  $\Psi_{((i, j), \pm)}^r$ is a square and invertible matrix of the size $n\bigl(r, ((i, j), \pm))$.
\item Each matrix $\Phi_{(i, j)}^r$ is divided into blocks $\left(\Phi_{(i, j)}^r\right)_{uv}$, where $u, v \in \dF_{((i, j), r)}$. Moreover, this blocks
satisfy the following additional constraints.
\begin{enumerate}
\item Let $u = q_{((i, j), r)}^{((i, k), r\pm 1)}$. Then for any $v \in \dF_{((i, j), r)}$, the block $\left(\Phi_{(i, j)}^r\right)_{uv}$ has
\begin{itemize}
\item $l\bigl((i, (k, j)), r\bigr)$ rows, if $1 \le j < k \le m_{i+1}$,
\item $l\bigl((i, (j, k)), r-1\bigr)$ rows, if $1 \le k < j \le m_{i}$.
\end{itemize}
The same rule applies for the number of columns of the block $\left(\Phi_{(i, j)}^r\right)_{uv}$.
\item We have: $\left(\Phi_{(i, j)}^r\right)_{uv} = 0$ if $u < v$ in $\dF_{((i, j), r)}$.
\item For any $r \in \ZZ$, $1 \le i \le t$ and $1 \le a < b \le m_i$, let $u:= q_{((i, b), r)}^{((i, a), r-1)} \in \dF_{((i, b), r)}$ and
 $v := q^{((i, b), r)}_{((i, a), r-1)} \in \dF_{((i, a), r-1)}$. Then we have:
 $$
 \left(\Phi_{(i, b)}^r\right)_{uu} = \left(\Phi_{(i, a)}^{r-1}\right)_{vv}
 $$
\end{enumerate}
\end{enumerate}
Conversely, any tuples   of matrices $\left\{\Phi^r_{(i, j)}\right\}_{((i, j), r) \in \Omega \times \ZZ}$ and
$\left\{\Psi^r_{(i, j)}\right\}_{((i, j), r) \in \Omega \times \ZZ}$ satisfying the above constraints arise as the images of appropriate automorphisms of $Y^\bu$ and $V^\bu$.

\smallskip
\noindent
\textbf{Summary}. The transformation rule (\ref{E:transfrule}) can be rephrased as follows: two collections of decorated matrices
$\bigl\{\widehat\Theta_{(i, j)}^r\bigr\}_{((i, j), r) \in \Omega \times \ZZ}$ and  $\bigl\{\widehat\Theta_{(i, j)}^r\bigr\}_{((i, j), r) \in \Omega \times \ZZ}$
are equivalent, if the second collection can be obtained from the first one using the following transformations.

\smallskip
\noindent
1.~\textsl{Transformations of columns}. Let $r \in \ZZ$ be arbitrary.
\begin{enumerate}
\item If $(i, j) \ne (k, l) \in \Omega$ are such that $(i, j) \simeq (k, l)$  then  we may perform arbitrary invertible \emph{simultaneous} transformations
with the columns of  $\Theta_{(i, j)}^r$ and $\Theta_{(k, l)}^r$.
\item If $(i, j) \simeq (i, j)$ then for any $r \in \ZZ$, we may perform arbitrary invertible transformations \emph{within} each block $\pm$
of the matrix $\Theta_{(i, j)}^r$. However, we can not add columns from the block labelled by $+$ to columns from the block labelled by $-$.
\item If $(i, j) \not\simeq  (k, l)$ for any $(k, l) \in \Omega$ then we may perform arbitrary invertible transformations of columns $\Theta_{(i, j)}^r$.
\end{enumerate}

\smallskip
\noindent
2.~\textsl{Transformations of rows}. Let  $((i, j), r) \in \Omega \times \ZZ$.
\begin{enumerate}
\item We can perform arbitrary invertible transformations of rows within the horizontal block of the matrix $\Theta_{(i, j)}^r$ labelled by $q_{((i, j), r)}^{((i, m_i+1), r-1)}$.
\item For any $u < v \in \dF_{((i, j), r)}$, we may add a scalar multiple of any row of weight $u$ to any row of weight $v$.
\item For any element $(i, \bar{j}) \in \Omega$ with $\bar{j} \ne j$, we may perform arbitrary simultaneous transformations of horizontal blocks of the matrices $\Theta_{(i, j)}^r$ and $\Theta_{(i, \bar{j})}^{r\pm 1}$, labelled by the symbols  $q_{((i, j), r)}^{((i, \bar{j}), r\pm 1)}$ and
 $q^{((i, j), r)}_{((i, \bar{j}), r\pm 1)}$, respectively.
\end{enumerate}

\smallskip
\noindent
\textbf{Conclusion}. The classification of objects of the category $\Tri^\ast(A)$ (and as a conclusion, of the homotopy category $\mathsf{Hot}^\ast(A-\mathsf{pro})$) reduces to a certain \emph{matrix problem}. As we shall see below, this
 matrix problem turns out to be \emph{tame}. Moreover,
there are explicit combinatorial rules to write canonical forms for the   decorated matrices describing indecomposable objects of this matrix problem.

\section{Bimodule problems and representations of bunches of (semi--)chains}
The notion   of a bimodule problem has been introduced by the second--named author  in \cite{DrozdLOMI} in an attempt  to give  a  formal definition  of a matrix problem. See also \cite{dr,BurbanDrozdMemoirs} for further examples and elaborations. In this section, we follow very closely the exposition of \cite[Chapter 8]{BurbanDrozdMemoirs}.

Let $\kk$ be a field, $\catC$ be an $\kk$--linear category and $\catB$ be an $\catA$--bimodule. Recall that this   means that
for  any pair of objects $A, B$ of $\catC$, we have  a $\kk$--vector space $\catB(A, B)$ and for any further pair of objects
$A', B'$ of $\catC$ there are left and right multiplication maps
$$
\catA(B, B') \times \catB(A, B) \times \catA(A', A) \lar \catB(A', B'),
$$
which are $\kk$--multilinear and associative.
\begin{definition}
The   \emph{bimodule category} $\el(\catA, \catB)$  (or  \emph{category of elements of the $\catA$--bimodule $\catB$}) is defined as follows. Its objects are pairs $(A, \vartheta)$, where
$A$ is an object of the category $\catA$ and $\vartheta \in \catB(A, A)$, whereas the morphisms  are defined as follows:
$$\el(\catA, \catB)\bigl((A, \vartheta), (A', \vartheta')\bigr) = \bigl\{f \in \catA(A, A') \, | \, f \vartheta = \vartheta' f \bigr\}.$$
The composition of morphisms in $\el(\catA, \catB)$  is the same as  in $\catA$.
\end{definition}

\begin{remark}
 The category $\el(\catA, \catB)$  is additive, $\kk$--linear  and idempotent complete provided the category $\catA$ is additive and idempotent complete. In applications, one usually begins
with such a category $\catA$, for which the endomorphism algebra of any of its objects is local
(obviously, in this case, $\catA$ can not be additive). Then one takes the \emph{additive closure}
$\catA^{\omega}$ of $\catA$ and extends $\catB$ to an $\catA^{\omega}$--bimodule $\catB^{\omega}$ by additivity. Abusing the notation,
we write  $\el(\catA, \catB)$, having actually the category $\el(\catA^{\omega}, \catB^{\omega})$ in mind.
\end{remark}

\begin{example}\label{Ex:bunchesofChains} Consider the following   category $\catA$ and $\catA$--bimodule $\catB$.
\begin{itemize}
\item $\Ob(\catA) = \{a, c, d\}$.
\item The non--zero morphism spaces of $\catA$ are the following ones:
\begin{itemize}
\item $\catA(a, a) = \kk 1_{a}$, $\catA(c, c) = \kk 1_{c}$ and $\catA(d, d) = \kk 1_{d}$, whereas
\item $\catA(c, d) = \langle \nu_1, \nu_2\rangle_{\kk} \cong \kk^2$.
\end{itemize}
\item The bimodule $\catB$ is defined as follows:
\begin{itemize}
\item $\catB(a, c) = \langle \phi_1, \phi_2\rangle_{\kk} \cong \kk^2 \cong \catB(a, d) = \langle \psi_1, \psi_2\rangle_{\kk}$.
    \item For $(x, y) \notin \bigl\{(a, c), (a, d)\bigr\}$ we have: $\catB(x, y) = 0$.
\item The action of $\catA$ on $\catB$ is given by the  rule: $\nu_i \circ \phi_j = \delta_{ij}\psi_i$ for $i, j = 1, 2$.
\end{itemize}
\end{itemize}
\noindent
The entire  data can be visualized by the following picture.
\begin{center}
\begin{tikzpicture}[scale=0.75,
    thick,
    dot/.style={fill=blue!10,circle,draw, inner sep=1pt, minimum size=4pt}]
 \draw (0,2) node[dot] (a1){} (0,-2)node[dot](b1){} (3,0)node[dot](c1){}
       (8,2) node[dot] (a2){} (8,-2)node[dot](b2){} (5,0)node[dot](c2){}
 ;

\draw[-stealth, thick, dashed](c1)-- node [above right]{$\phi_1$}(a1);
\draw[-stealth, thick, dashed](c1)--node [below right]{$\psi_1$}(b1);
 \draw[-stealth,thick] (a1) to node[left=0.5mm]{$\nu_1$} (b1);

\draw[-stealth, thick, dashed](c2)-- node [above left]{$\phi_2$}(a2);
\draw[-stealth, thick, dashed](c2)--node [below left]{$\psi_2$}(b2);
 \draw[-stealth,thick] (a2) to node[left=0.5mm]{$\nu_2$} (b2);

\draw[blue] ($(a1)-(3mm,3mm)$ ) rectangle  ($(a2)+(3mm,3mm)$ );
\draw[blue] ($(b1)-(3mm,3mm)$ ) rectangle  ($(b2)+(3mm,3mm)$ );

 \node[draw= green, ellipse,  fit=(c1) (c2)]{};

\end{tikzpicture}
\end{center}
The encircled points represent the three objects of the category $\catA$, the solid arrows denote non--trivial morphisms in $\catA$, whereas
the dotted arrows describe the generators of the bimodule $\catB$. This example is analogous to \cite[Example 8.3]{BurbanDrozdMemoirs}.
Hence, we skip  the computational details and state the matrix problem, underlying the description of the isomorphism classes of objects of the bimodule category $\el(\catA, \catB)$.
\begin{itemize}
\item  We have two matrices $\Theta_1$ and $\Theta_2$ of the same size over the field $\kk$, each of which consists of two horizontal blocks  $\Phi_1, \Psi_1$, respectively  $\Phi_2, \Psi_2$. These blocks  correspond to the generators $\phi_1, \phi_2, \psi_1$ and $\psi_2$ of the bimodule $\catB$.
   The blocks $\Phi_1$ and $\Phi_2$ (respectively, $\Psi_1$ and $\Psi_2$) have the same size.
\item
The admissible  transformations of  columns and rows of  the matrices $\Theta_1$ and $\Theta_2$,
are compositions of the following  ones.
\begin{itemize}
\item
We can perform  any \emph{simultaneous}  elementary transformations
\begin{itemize}
\item   of columns of  $\Theta_1$ and $\Theta_2$.
\item   of rows  of  $\Phi_1$ and  $\Phi_2$ (respectively, $\Psi_1$ and $\Psi_2$).
\end{itemize}
\item    For $i = 1, 2$, we may add   an arbitrary multiple of any row of the block $\Phi_i$ to any row
of the block $\Psi_i$ for $i = 1, 2$. These transformations of rows of the matrices $\Theta_1$ and $\Theta_2$ can be performed \emph{independently}.
\end{itemize}
\end{itemize}
The obtained matrix problem can be visualized by the following picture.
   \begin{center}
 \begin{tikzpicture}
  \matrix[nodes={draw, thick, fill=blue!10},
    row sep=1pt, ]  {
  \node[ minimum height=30pt, minimum width=90pt](h1){$\Phi_1$}; &&\\
  \node[ minimum height=60pt, minimum width=90pt](s1){$\Psi_1$};&& \\
  }  ;

  \matrix[nodes={draw,thick, fill=blue!10},
    row sep=1pt, ]  at (7,0) {
  \node[ minimum height=30pt, minimum width=90pt](h2){$\Phi_2$}; &&\\
  \node[ minimum height=60pt, minimum width=90pt](s2){$\Psi_2$};&& \\
  }
  ;

\node (hr1) [base right=-0.25 cm of h1, inner sep=0pt, minimum size=10pt ]{};
\node (sr1) [base right=-0.25 cm of s1, inner sep=0pt, minimum size=10pt ]{};

\node (hl2) [base left=-0.25 of h2,inner sep=0pt, minimum size=10pt ]{};
\node (sl2) [base left=-0.25 cm of s2,inner sep=0pt, minimum size=10pt  ]{};

\node (hl1) [base left=0 cm of h1, inner sep=0pt, minimum size=10pt ]{};
\node (sl1) [base left=0 cm of s1, inner sep=0pt, minimum size=10pt ]{};

\node (hr2) [base right=0 cm of h2,inner sep=0pt, minimum size=10pt  ]{};
\node (sr2) [base right=0 cm of s2,inner sep=0pt, minimum size=10pt  ]{};

\tikzset{every loop/.style={min distance=5mm,in=135,out=225,looseness=5}}
\draw[-stealth,thick] (hl2)to[loop ] node[minimum size=20pt,inner sep=10pt](hh2){}   ();
\draw[-stealth,thick] (sl2)to[loop ] node[minimum size=20pt,inner sep=10pt](ss2){}   ();

\tikzset{every loop/.style={min distance=5mm,in=45,out=315,looseness=5}}
\draw[-stealth,thick] (hr1)to[loop ] node(hh1)[minimum size=20pt,inner sep=10pt]{}   ();
\draw[-stealth,thick] (sr1)to[loop ] node(ss1)[minimum size=20pt,inner sep=10pt]{}   ();

 \draw[dashed](hh1) to node[midway,above]{} (hh2);
 \draw[dashed](ss1) to node[midway,above]{} (ss2);

\draw[-stealth](hl1) to[bend right]node[midway,left]{} (sl1);
\draw[-stealth](hr2) to[bend left]node[midway,right]{} (sr2) ;

 \coordinate[above=10pt of h2](ha2){};

 \coordinate[above=10pt of  h1](ha1){};

 \draw[stealth-stealth, thick, dashed](h1) -- (ha1)-- (ha2)node[midway,above]{} -- (h2);

\end{tikzpicture}
\end{center}
This matrix problem arises in the course  of a classification of those vector bundles $\kF$ on the nodal Weirstra\ss{} cubic $E = \overline{V(y^2- x^3 - x^2})$, for which $\nu^*(\kF) \in \add\bigr(\kO_{\PP^1} \oplus \kO_{\PP^1}(1)\bigr)$, where $\PP^1 \stackrel{\nu}\lar E$ is the normalization map; see
\cite{DGVB, SurveyBBDG}.
\end{example}

\begin{example}\label{Ex:bunchesofSemiChains} Consider the following   category $\catA$ and bimodule $\catB$.
\begin{center}
\begin{tikzpicture}[scale=0.75,
    thick,
    dot/.style={fill=blue!10,circle,draw, inner sep=1pt, minimum size=4pt}]
 \draw (0,3) node[dot] (a1){} (0,-3)node[dot](b1){} (4,-1)node[dot](c1){} (4,1)node[dot](d1){}
       (10,3) node[dot] (a2){} (10,-3)node[dot](b2){} (6,-1)node[dot](c2){} (6,1)node[dot](d2){}
 ;

 \draw[-stealth,thick] (a1) to node[left=0.75mm]{$\nu_1$} (b1);
\draw[-stealth,thick] (b2) to node[right=0.75mm]{$\nu_2$} (a2);

\draw[-stealth, thick, dashed](c1)-- node [below left]{$\phi_1^+$}(a1);
\draw[-stealth, thick, dashed](c1)--node [below ]{$\psi_1^+$}(b1);

\draw[-stealth, thick, dashed](d1)-- node [right]{$\phi_1^-$}(a1);
\draw[-stealth, thick, dashed](d1)--node [above ]{$\psi_1^-$}(b1);

\draw[-stealth, thick, dashed](c2)-- node [below]{$\phi_2^-$}(a2);
\draw[-stealth, thick, dashed](c2)--node [below left]{$\psi_2^-$}(b2);

 \draw[-stealth, thick, dashed](d2)-- node [above right ]{$\phi_2^+$}(a2);
\draw[-stealth, thick, dashed](d2)--node [above right]{$\psi_2^+$}(b2);

\draw[blue] ($(a1)-(3mm,3mm)$ ) rectangle  ($(a2)+(3mm,3mm)$ );
\draw[blue] ($(b1)-(3mm,3mm)$ ) rectangle  ($(b2)+(3mm,3mm)$ );
\end{tikzpicture}
\end{center}
\begin{itemize}
\item $\Ob(\catA) = \bigl\{a_+, a_-, b_+, b_-, c, d\bigr\}$.
\item The only non--zero morphism spaces of the category $\catA$ are:
\begin{itemize}
\item $\catA(a_\pm, a_\pm) = \kk 1_{a_\pm}$, $\catA(b_\pm, b_\pm) = \kk 1_{b_\pm}$ $\catA(c, c) = \kk 1_{c}$ and $\catA(d, d) = \kk 1_{d}$.
\item $\catA(c, d) = \kk\nu_1$ and $\catA(d, c) = \kk\nu_2$ (hence, $\nu_1 \nu_2 = 0$ and $\nu_2 \nu_1 = 0$).
\end{itemize}
\item The bimodule $\catB$ is defined as follows:
\begin{itemize}
\item $\catB(a_\pm, c) = \langle \phi_1^\pm, \phi_2^\pm\rangle_{\kk} \cong \kk^2 \cong \catB(a_\pm, d) = \langle \psi_1^\pm, \psi_2^\pm\rangle_{\kk}$.
    \item For $(x, y) \notin \bigl\{(a_\pm, c), (a_\pm, d)\bigr\}$ we have: $\catB(x, y) = 0$.
\item The action of  $\catA$ on  $\catB$ is given by the following rules:
\begin{itemize}
\item $\nu_1 \circ \phi_1^\pm  = \psi_1^\pm$ and $\nu_2 \circ \psi_2^\pm  = \phi_2^\pm$.
\item $\nu_1 \circ \phi_2^\pm = 0$ and $\nu_2 \circ \psi_1^\pm = 0$.
\end{itemize}
\end{itemize}
\end{itemize}
\noindent
The matrix problem, underlying the description of the isomorphism classes of objects of the bimodule category $\el(\catA, \catB)$ is the following.
\begin{itemize}
\item  We have two matrices $\Theta_1$ and $\Theta_2$ over the field $\kk$. Each $\Theta_i$ is divided into four blocks
$\Phi_i^\pm, \Psi_i^\pm$ for $i = 1, 2$.
   The  blocks $\Phi_1 = \left(\Phi_1^+ \big| \Phi_1^-\right)$ and $\Phi_2 = \left(\Phi_2^+ \big| \Phi_2^-\right)$   $\bigl($respectively,
$\Psi_1 = \left(\Psi_1^+ \big| \Psi_1^-\right)$ and $\Psi_2 = \left(\Psi_2^+ \big| \Psi_2^-\right)$$\bigr)$ of the matrices $\Theta_1$ and $\Theta_2$ have the same number of rows.
\item
The admissible  transformations of  columns and rows of  the matrices $\Theta_1$ and $\Theta_2$,
are compositions of the following  ones.
\begin{itemize}
\item
We can perform  any \emph{simultaneous}  elementary transformations of rows  of  the blocks $\Phi_1$ and $\Phi_2$ (respectively,  $\Psi_1$ and $\Psi_2$)
of the matrices $\Theta_1$ and $\Theta_2$.
\item    We can perform  \emph{independent}  elementary transformations
\begin{itemize}
\item of columns  of each block $\Theta_i^\pm = \left(\begin{array}{c} \Phi_i^\pm \\ \hline \Psi_i^\pm\end{array} \right)$ of the matrix $\Theta_i$ for $i = 1, 2$.
\item we may add   an arbitrary multiple of any row of the block $\Phi_1$ to any row
of the block $\Psi_1$ of the matrix $\Theta_1$.
\item similarly, we may add   an arbitrary multiple of any row of the block $\Psi_2$ to any row
of the block $\Phi_2$ of the matrix $\Theta_2$.
\end{itemize}
\end{itemize}
\end{itemize}
This matrix problem can be visualized by the following picture.
\begin{center}
 \begin{tikzpicture}
  \matrix[nodes={draw, thick, fill=blue!10},
    row sep=0pt, ]  {
  \node[ minimum height=60pt, minimum width=30pt](h01){$\Phi_1^+$}; &\node[ minimum height=60pt, minimum width=60pt](h1){$\Phi_1^-$};&\\
  \node[ minimum height=30pt, minimum width=30pt](s01){$\Psi_1^+$};&\node[ minimum height=30pt, minimum width=60pt](s1){$\Psi_1^-$};& \\
    }  ;

  \matrix[nodes={draw,thick, fill=blue!10},
    row sep=0pt, ]  at (7,0) {
  \node[ minimum height=60pt, minimum width=60pt](h02){$\Phi_2^+$}; &\node[ minimum height=60pt, minimum width=30pt](h2){$\Phi_2^-$};&\\
  \node[ minimum height=30pt, minimum width=60pt](s02){$\Psi_2^+$};&\node[ minimum height=30pt, minimum width=30pt](s2){$\Psi_2^-$};& \\
    }
  ;

\node (hr1) [base right=-0.25 cm of h1, inner sep=0pt, minimum size=10pt ]{};
\node (sr1) [base right=-0.25 cm of s1, inner sep=0pt, minimum size=10pt ]{};

\node (hl2) [base left=-0.25 of h02,inner sep=0pt, minimum size=10pt ]{};
\node (sl2) [base left=-0.25 cm of s02,inner sep=0pt, minimum size=10pt  ]{};

\node (hl1) [base left=0 cm of h01, inner sep=0pt, minimum size=10pt ]{};
\node (sl1) [base left=0 cm of s01, inner sep=0pt, minimum size=10pt ]{};

\node (hr2) [base right=0 cm of h2,inner sep=0pt, minimum size=10pt  ]{};
\node (sr2) [base right=0 cm of s2,inner sep=0pt, minimum size=10pt  ]{};

\tikzset{every loop/.style={min distance=5mm,in=135,out=225,looseness=5}}
\draw[-stealth,thick] (hl2)to[loop ] node[minimum size=20pt,inner sep=10pt](hh2){}   ();
\draw[-stealth,thick] (sl2)to[loop ] node[minimum size=20pt,inner sep=10pt](ss2){}   ();

\tikzset{every loop/.style={min distance=5mm,in=45,out=315,looseness=5}}
\draw[-stealth,thick] (hr1)to[loop ] node(hh1)[minimum size=20pt,inner sep=10pt]{}   ();
\draw[-stealth,thick] (sr1)to[loop ] node(ss1)[minimum size=20pt,inner sep=10pt]{}   ();

 \draw[dashed](hh1) to node[midway,above]{} (hh2);
 \draw[dashed](ss1) to node[midway,above]{} (ss2);

\draw[-stealth,thick](hl1) to[bend right]node[midway,left]{}  (sl1);
\draw[-stealth,thick](sr2) to[bend right]node[midway,right]{} (hr2) ;

\node(ha1)[base  right =0 cm of h1.north, inner sep=0pt, minimum size=10pt  ]{};
\node(ha01)[base  right =0 cm of h01.north, inner sep=0pt, minimum size=10pt  ]{};

\node(ha2)[base  right =0 cm of h2.north, inner sep=0pt, minimum size=10pt  ]{};
\node(ha02)[base  right =0 cm of h02.north, inner sep=0pt, minimum size=10pt  ]{};


 \tikzset{every loop/.style={min distance=8 mm,in=45,out=135,looseness=5}}

\draw[-stealth,thick] (ha01)to[loop ] node [minimum size=20pt,inner sep=10pt]{}   ();
\draw[-stealth,thick] (ha1) to[loop ] node [minimum size=20pt,inner sep=10pt]{}   ();

\draw[-stealth,thick] (ha2) to[loop ] node [minimum size=20pt,inner sep=10pt]{}   ();
\draw[-stealth,thick] (ha02) to[loop ] node [minimum size=20pt,inner sep=10pt]{}   ();

\end{tikzpicture}
\end{center}
\end{example}

\begin{example}[Chessboard problem]\label{Ex:ChessboardProblem} For any  $n \in \NN$, we have   the following   category $\catA$ and $\catA$--bimodule $\catB$.
\begin{itemize}
\item $\Ob(\catA) = \bigl\{x_1, \dots, x_n, y_1, \dots, y_n\bigr\}$.
\item For any $z \in \Ob(\catA)$ we have: $\catA(z, z) = \kk 1_z$. Next, for any $1 \le i \le j \le n$ we have:
$$
\catA(x_i, x_j) = \kk \cdot p_{ji}, \; \mbox{\rm and} \; \catA(y_j, y_i)=  \kk \cdot q_{ij},
$$
whereas all the remaining morphism spaces are zero.
In this notation, $p_{ii}$ and $q_{ii}$ are the identity morphisms for any $1 \le i \le n$. Moreover, for any
$1 \le i \le j \le  k \le n$, we have the composition rules:
$$
p_{kj} p_{ji} = p_{ki} \; \mbox{\rm and} \; q_{ij} q_{jk}  = q_{ik}.
$$
\item For any $1 \le k, l \le n$ we have: $\catB(y_l, x_k) = \kk \cdot \omega_{kl}$, whereas $\catB(z', z'') = 0$ unless $z'= y_l$ and $z'' = x_k$ for some
$1 \le k, l \le n$.
\item The action of the category $\catA$ on the bimodule $\catB$ is specified by the formula
$$
q_{lk} \circ \omega_{kj} \circ p_{ij} = \omega_{li}
$$
for any $1 \le i \le j \le n$ and $1 \le l \le k \le n$.
\end{itemize}
The matrix problem, underlying the description of the isomorphism classes of objects of the bimodule category $\el(\catA, \catB)$ is the following.
\begin{itemize}
\item We have a square matrix $\Theta$ over the field $\kk$, divided into $n$ horizontal and $n$ vertical stripes, labelled
by the symbols $x_1, \dots, x_n$, respectively $y_1, \dots, y_n$. Moreover, for any $1 \le i \le n$, the number of rows of the horizontal block
$x_i$ is equal to the number of columns of the vertical block $y_i$.
\item For any $1 \le i \le n$ one can perform an arbitrary elementary transformation of  rows of the $x_i$-th stripe \emph{simultaneously} with the inverse elementary transformation of columns of the $y_i$-th stripe of the matrix  $\Theta$.
\item For any $1 \le i \le j \le n$ one can add any multiple of any row of $x_i$-th stripe to any row
of $x_j$-th stripe. Similarly, one can add any multiple of any column of $y_i$-th stripe to any  column
of $y_j$-th stripe.
\end{itemize}
This matrix problem can be visualized by the following picture.

\newcommand{\sfrm}[3]{
\node[draw,solid, thick, fit=(#1-1-1)(#1-#2-#3), inner sep=0pt]{};}
\begin{center}
\begin{tikzpicture}
[ dot/.style={fill=blue!10,circle,draw, inner sep=1pt, minimum size=1pt},
str/.style={inner sep=1pt, minimum size=0pt}
]

\matrix (first) [tbl5,  name=tbl,
row 1/.style={minimum height=30pt},
row 2/.style={minimum height=40pt},
row 3/.style={minimum height=30pt},
row 4/.style={minimum height=50pt},
column 1/.style={text width=30pt},
column 2/.style={text width=40pt},
column 3/.style={text width=30pt},
column 4/.style={text width=50pt},
] at (0,0)
{
~ & ~ & ~ & ~                \\
~ & ~ & ~ & ~                \\~ & ~ & ~ & ~                \\~ & ~ & ~ & ~                \\%
};
\sfrm{tbl}{4}{4};

\foreach \x in {1,..., 4}{\foreach \y in {1,..., 4}{
\hdline{tbl}{\x}{\y}; \vdline{tbl}{\x}{\y};
}

\ifnum\x =3
\node[above=1pt of tbl-1-\x, outer sep=3pt, text depth= -0.5ex, text height=0pt](a\x){$\dots$};
\node[left=0pt of tbl-\x-1, outer sep=3pt, text depth= -0.5ex, text height=7pt](l\x){$\vdots$};
 \else
{
\node[above=2pt of tbl-1-\x, dot, outer sep=3pt](a\x){};
\node[base left=3pt of tbl-\x-1, outer sep=3pt, dot](l\x){};
}
\fi


\ifnum\x=1
\else
 \pgfmathsetmacro\i{\x-1};
 \draw[thick] (a\i)[ ->] to node[above =2pt]{$$}  (a\x);
  \fi
}

\node[below= 1pt of tbl-1-1.north,red ]{$_{y_1}$};
\node[below= 1pt of tbl-1-2.north,red ]{$_{y_2}$};
\node[below= 1pt of tbl-1-4.north,red ]{$_{y_n}$};
\node[base right=1pt of tbl-1-1.west,red]{$_{x_1}$};
\node[base right=1pt of tbl-2-1.west,red]{$_{x_2}$};
\node[base right=1pt of tbl-4-1.west,red]{$_{x_n}$};

\draw[-stealth,thick] (l1)to[loop left]  node{} ();
\draw[-stealth,thick] (a1)to[loop above]  node{} ();
\draw[-stealth,thick] (l2)to[loop left]  node{} ();
\draw[-stealth,thick] (a2)to[loop above]  node{} ();
\draw[-stealth,thick] (l4)to[loop left]  node{} ();
\draw[-stealth,thick] (a4)to[loop above]  node{} ();

\draw[thick,->] (l1)--node[left=2pt]{$$} (l2); \draw[thick,->] (l2)--node[left=2pt]{$$}(l3);
\draw[thick,->] (l3)--node[left=2pt]{$$}(l4);
%

\end{tikzpicture}
\end{center}
\end{example}

\begin{definition}[Bunches of semi--chains] Let $\Sigma$ be a finite or countable set. Assume that for any $i \in \Sigma$, we have totally ordered
sets (chains) $\dE_i$ and $\dF_i$ such that
$\dE_i \cap \dE_j  = \dF_i \cap \dF_j = \emptyset$ for all $i \ne j \in \Sigma$ and $\dE_i \cap \dF_j = \emptyset$ for all $i, j \in \Sigma$.
The sets $\dE_i$ and $\dF_i$ are usually assumed to be  finite or countable. We put
$$
\dE := \bigcup\limits_{i \in \Sigma} \dE_i, \quad
\dF := \bigcup\limits_{i \in \Sigma} \dF_i \quad \mbox{\rm and} \quad
\ddX := \dE \cup \dF.
$$
Note that $\ddX$
is a \emph{partially ordered} set with a partial order $\le$ arising from the total order on the sets $\dE_i$ and $\dF_i$. Let  $\sim$ be a symmetric but not necessarily reflexive relation on $\ddX$ such that for any $x \in \ddX$ there exists a\emph{t most one}
$y \in \ddX$ such that $x \sim y$. Then the datum
\begin{equation}\label{E:bunchofsemichains}
\dX := \bigl(\Sigma, \{\dE_i\}_{i \in \Sigma}, \{\dF_i\}_{i \in \Sigma}, \le, \sim\bigr)
\end{equation} is called
\emph{bunch of semi--chains}. In the case $x \not\sim x$ for any $x \in \ddX$, we shall say that $\dX$ is a \emph{bunch of chains}.
Note that we automatically get another symmetric (but not reflexive) relation $-$ on the set $\ddX$. Namely, for any $x, y \in \ddX$ we say that
$x-y$ if there exists $i \in \Sigma$ such that $(x, y)  \in \bigl(\dE_i \times  \dF_i\bigr) \cup \bigl(\dF_i \times  \dE_i\bigr)$.
\end{definition}

\begin{definition}[Bimodule problem associated with a bunch of semi--chains]
Let $\dX$ be a bunch of semi--chains.

\smallskip
\noindent
1.~In the first step, we define the following $\kk$--linear category $\catC$.
\begin{itemize}
\item The objects of $\catC$ are elements of the set $\ddX$.
\item For any $x \in \Ob(\catC)$ we have: $\catC(x, x) = \kk \cdot 1_x$. Next, for any $x, y \in \Ob(\catC)$ we have
 $$
 \catC(x, y) = \left\{
 \begin{array}{cl} \kk \cdot p_{yx} & \mbox{\rm if} \; x \le y \\
 0 & \mbox{\rm otherwise}
 \end{array}
 \right.
 $$
 In particular, we put $p_{xx} = 1_x$ for any $x \in \ddX$.
 \item The composition of morphisms in $\catC$ is determined by the rule $p_{zy} \cdot p_{yx} = p_{zx}$ for any elements $x \le y \le z$ in $\ddX$.
\end{itemize}
Note that the relation $\sim$ does not play any role in the definition of the category $\catC$.

\smallskip
\noindent
2.~In the second  step, we define a new category $\catA$ based on the category $\catC$.
\begin{itemize}
\item Firstly, we construct a new partially ordered set $\overline{\ddX}$, replacing each element $x \in \ddX$ such that $x \sim x$ by a pair of  new elements $x_+$ and $x_-$, which are no longer self--equivalent. The element $x \in \ddX$ is called \emph{predecessor} of the elements $x_\pm \in \overline{\ddX}$.
    In this way, we get new (partially ordered) sets $\overline{\dE}_i$ and $\overline{\dF}_i$ for each $i \in \Sigma$.
    Namely, for any $y, z \in \overline{\dE}_i$ (respectively, $\dF_i$) we say that $y < z$ if it is the case for the corresponding predecessors.
    However, the elements $x_+$ and $x_-$ are incomparable in the new set $\overline{\ddX}$.
\item Next, we construct yet another    set $\widetilde{\ddX}$ based on  $\overline{\ddX}$, replacing any  pair of elements  $x', x'' \in \overline{\ddX}$ such that $x' \sim x''$,  by a single element  $x \in \widetilde{\ddX}$. The elements $x', x'' \in \ddX$ are called \emph{predecessors} of the element $x \in \widetilde{\ddX}$. We say  that $x'$ and $x''$  are \emph{tied}.
\item An element $\tilde{x} \in \widetilde{\ddX}$ with two predecessors will be called \emph{element of the first type}. Elements of $\widetilde{X}$
of the form $x_\pm$ are of the \emph{second type}, whereas the remaining ones are of  the \emph{third type}.
    For $(x, \tilde{x}) \in \ddX \times \widetilde{\ddX}$  we shall write $x \triangleleft \tilde{x}$, provided  $x$ is a predecessor of  $\tilde{x}$.
\item We put: $\Ob(\catA) := \widetilde{\ddX}$. Next,  $\catA(x_\pm, x_\mp) := 0$  for any pair of objects $(x_+, x_-) \in \widetilde{\ddX} \times \widetilde{\ddX}$ of the second type. For any
 pair $(\tilde{u}, \tilde{v}) \in \widetilde{\ddX} \times \widetilde{\ddX}$,  which is  not of the form $(x_\pm, x_\mp)$, we put:
$
\catA(\tilde{u}, \tilde{v}) := \bigoplus_{\substack{u \triangleleft \tilde{u}\\
v \triangleleft \tilde{v}}} \catC(u, v).
$
\item The composition of morphisms
$
\catA(\tilde{v}, \tilde{w}) \times \catA(\tilde{u}, \tilde{v}) \lar \catA(\tilde{u}, \tilde{w})
$
is specified by the rule: $p_{wu} \cdot p_{vu} = p_{wu}$ provided $u \le v \le w$, whereas all other compositions are defined to be zero.
\end{itemize}

\smallskip
\noindent
3.~Finally,  we define a bimodule $\catB$ over the category $\catA$.
\begin{itemize}
\item For any $i \in \Sigma$,  $x \in \dE_i$ and  $y\in \dF_i$,  we introduce a symbol
$\omega_{yx}$.
\item For any objects $\tilde{x}, \tilde{y}$ of the category $\catA$, we put:
$$
\catB(\tilde{x}, \tilde{y}) := \bigoplus\limits_{\substack{y \triangleleft \tilde{y}, \tilde{y} \in \dF\\ x \triangleleft \tilde{x}, \tilde{x} \in \dE \\ y-x}} \kk \cdot \omega_{yx}.
$$
\item Let $\tilde{x}, \tilde{y}, \tilde{u}$ and  $\tilde{v}$ be objects of $\catA$. Then the  action
$$
\catA(\tilde{v}, \tilde{y}) \times \catB(\tilde{u}, \tilde{v}) \times \catA(\tilde{x}, \tilde{u}) \lar \catB(\tilde{x}, \tilde{y})
$$
of the category $\catA$ on the bimodule $\catB$ is specified by the rules
$$
p_{yv} \circ \omega_{vu} \circ p_{ux} = \omega_{yx} \; {\rm for} \;
u \triangleleft \tilde{u}, v \triangleleft \tilde{v}, x \triangleleft \tilde{x} \; {\rm and} \; y \triangleleft \tilde{y},
$$
whereas all other compositions are put to be zero.
\end{itemize}
We call the bimodule category  $\Rep(\dX) := \el(\catA, \catB)$ the \emph{category of representations} of a bunch of semi--chains $\dX$. \qed
\end{definition}

\begin{remark}
In explicit terms, the description of the isomorphism classes of objects of the category $\Rep(\dX)$ is given by the following matrix problem.
\begin{itemize}
\item For any $i \in \Sigma$, we have a matrix $\Theta_i$ (of finite size) over the field $\kk$, which is divided into horizontal and vertical stripes, labelled by the elements of the sets $\overline{\dE}_i$ and $\overline{\dF}_i$, respectively (since the sets $\overline{\dE}_i$ and $\overline{\dF}_i$ are partially ordered, the corresponding labels of rows and columns of $\Theta_i$ will be also called \emph{weights}).  Note that all but finitely many matrices $\Theta_i$ are equal to zero.
    \item Let $x, y \in \overline{\ddX}$ be such that $x \sim y$ and $x \in \overline{\dE}_i \cup \overline{\dF}_i$, whereas $y \in \overline{\dE}_j \cup \overline{\dF}_j$. Then the stripes  of the matrices $\Theta_i$ and $\Theta_j$ labelled by $x$ and $y$ have the same number of rows/columns and called
        \emph{conjugate}.
\end{itemize}
We are allowed to perform   compositions of the following (elementary) transformations with the collection of matrices $\bigl\{\Theta_i\bigr\}_{i \in \Sigma}$.
\begin{itemize}
\item For any $x < x' \in \overline{\dE}_i$, we may add a multiple of any row of the matrix $\Theta_i$ of weight $x$ to any row of weight $x'$.
\item Similarly, for any $y < y' \in \overline{\dF}_i$, we may add a multiple of any column  of the matrix $\Theta_i$ of weight $y'$ to any column of weight $y$.
\item Assume that $x \in \overline{\dE}_i \cup \overline{\dF}_i$ and $y \in \overline{\dE}_j \cup \overline{\dF}_j$ are such that $x \sim y$.
If $(x, y) \in \bigl(\overline{\dE}_i  \times \overline{\dE}_j\bigr) \cup \bigl(\overline{\dF}_i  \times \overline{\dF}_j\bigr)$ then we may do any  simultaneous transformations with stripe  of $\Theta_i$ labelled by $x$ and the conjugated stripe  of $\Theta_j$ labelled by $y$. Otherwise (i.e.~if  $(x, y) \in \bigl(\overline{\dE}_i  \times \overline{\dF}_j\bigr) \cup \bigl(\overline{\dF}_i  \times \overline{\dE}_j\bigr)$), we may perform  any elementary transformation within the block $x$ of $\Theta_i$ simultaneously with the inverse elementary transformation within the block $y$ of $\Theta_j$.
\end{itemize}
\end{remark}

\begin{example}\label{Ex:bunchesofChains2} Let $\Sigma = \{1, 2\}$, $\dE_i = \{c_i, d_i\}$, $\dF_i = \{a_i\}$  and $c_i < d_i$ for $i = 1, 2$. Furthermore, assume that
$a_1 \sim a_2$, $c_1 \sim c_2$ and $d_1 \sim d_2$. Denoting $\nu_i = p_{d_i c_i}$, $\phi_i:= \omega_{a_i c_i}$ and $\psi_i:= \omega_{a_i d_i}$ for $i = 1, 2$, we get the matrix problem from  Example \ref{Ex:bunchesofChains}.
\end{example}

\begin{example}\label{Ex:bunchesofSemiChains2} Let $\Sigma = \{1, 2\}$, $\dE_i = \{c_i, d_i\}$, $\dF_1 = \{a\}$, $\dF_2 = \{b\}$  and $c_i < d_i$ for $i = 1, 2$. Furthermore, assume that
$a \sim a$, $b \sim b$, $c_1 \sim c_2$ and $d_1 \sim d_2$.  This gives the matrix problem from  Example \ref{Ex:bunchesofSemiChains}.
\end{example}

\begin{example}\label{Ex:ChessboardProblem2} Let $\Sigma = \{*\}$, $\dE_* = \{x_1, \dots, x_n\}$, $\dF_* = \{y_1, \dots, y_n\}$, $x_1 < \dots < y_n$, $y_n < \dots < y_1$ and
$x_i \sim y_i$ for all $1 \le i \le n$. Then the corresponding matrix problem is the one from Example \ref{Ex:ChessboardProblem} (chessboard problem).
\end{example}

\begin{theorem}\label{T:BunchOfSemiChains}
Let $\dX$ be a bunch of semi--chains. Then the corresponding bimodule category $\Rep(\dX)$ has tame representation type (in some cases it can have discrete, or even finite representation type).
\end{theorem}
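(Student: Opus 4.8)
The plan is to reduce the statement to the classical classification of representations of bunches of \emph{chains}, due to Nazarova--Roiter \cite{NazarovaRoiter} and Bondarenko \cite{BondarenkoMain}, and to run the standard reduction (``differentiation'') algorithm for matrix problems. Recall that, by the matrix description preceding the theorem, $\Rep(\dX) = \el(\catA,\catB)$ amounts to a finite tuple of matrices $\bigl(\Theta_i\bigr)_{i\in\Sigma}$ over $\kk$, each decorated by weights from $\overline{\dE}_i$ (rows) and $\overline{\dF}_i$ (columns), together with the listed simultaneous and independent row/column transformations. Tameness means that for every dimension vector $\mathbf{d}$ there are finitely many one--parameter families of indecomposables of that dimension covering all but finitely many isomorphism classes. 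First I would fix a well--founded complexity invariant for such decorated tuples --- e.g.\ the total number of matrix entries $\sum_{i} |\Theta_i|$, ties broken by the number of self--conjugate stripes (those arising from an element $x$ with $x \sim x$, where column operations are confined to the two halves $x_+, x_-$).

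Next I would perform one reduction step. Among the bimodule generators $\omega_{yx}$ occurring with a non--zero block in some $\Theta_i$, choose one which is minimal for the order on $\dE$ induced by taking the minimal such $x$ and then the maximal such $y$ (the classical choice). The admissible transformations bring the corresponding block to the canonical shape $\smtr{E & 0 \\ 0 & 0}$; the identity part splits off a direct summand, and the complementary problem is again of the same kind, attached to a new bunch of semi--chains $\dX'$ produced from $\dX$ by explicit combinatorial rules: subdivide the stripes incident to $x$ and $y$, transport the partial order, and update $\sim$. The two facts that drive the induction are: \emph{(i)} the class of bunch--of--semi--chains problems is closed under this operation; and \emph{(ii)} the complexity invariant strictly decreases, unless $\dX'$ is one of finitely many ``critical'' shapes --- namely a problem with no bimodule generators (a product of chain problems without ties, hence finitely many indecomposables), a ``band'' shape whose classification is equivalent to the Kronecker/$\widetilde{A}_n$ problem of pairs of matrices up to simultaneous conjugacy (tame, with continuous parameter in $\kk^\ast$), or a problem with no surviving self--conjugate element, which is a genuine bunch of chains and is handled by \cite{BondarenkoMain}. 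Iterating, each indecomposable of $\Rep(\dX)$ either splits off during the reduction (a ``string'') or is induced from a band, with boundedly many bands per dimension; this gives tameness, the discrete and finite subcases being those in which no band ever appears.

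The main obstacle is the combinatorial bookkeeping that keeps the reduction inside the class and makes it terminate --- in particular the behaviour of a self--conjugate stripe, where a careless reduction could introduce a forbidden ``mixed'' column operation or a relation violating the ``at most one partner'' condition. One way to bypass this is the device already used in this paper to pass between skew--gentle and gentle algebras: pass to the $2$--fold cover that unfolds each self--conjugate $x$ into an ordinary conjugate pair, so that $\dX$ becomes a genuine bunch of chains $\widetilde{\dX}$ equipped with a $\ZZ/2$--action, apply the Nazarova--Roiter--Bondarenko classification to $\widetilde{\dX}$, and descend along the cover (this step uses $\mathrm{char}\,\kk \neq 2$; the characteristic two case is treated by a direct variant of the reduction). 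Once the reduction scheme is set up, writing the explicit string and band normal forms is routine.
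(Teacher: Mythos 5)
Your overall strategy --- run the Nazarova--Roiter reduction algorithm and reduce everything to bunches of chains --- is the same route the paper takes: in fact the paper does not prove Theorem \ref{T:BunchOfSemiChains} at all, but only records (in its ``Comment to the proof'') that the tameness proof is the reduction algorithm of \cite{NazarovaRoiter}, with the correct combinatorics of indecomposables supplied by \cite{BondarenkoMain} and further elaborations in \cite{KlinglerLevy, Deng, Nodal, BurbanDrozdMemoirs}. Measured against that, your sketch has the right shape, but it contains a genuine gap exactly at the point you yourself flag as ``the main obstacle'', and the device you propose to bypass it does not close the gap.

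Concretely: for a self--conjugate element $x\sim x$ the admissible column operations are confined to the two sub--stripes $x_+$ and $x_-$ separately, so you cannot in general bring the selected block to the canonical form $\smtr{E&0\\0&0}$ and split off an identity summand; this restriction is precisely what distinguishes semi--chains from chains, and it is the step at which the combinatorial pattern asserted in \cite{NazarovaRoiter} turned out to be wrong and had to be corrected by Bondarenko --- the paper's comment makes a point of this. Your fallback --- unfold each self--conjugate $x$ on a $2$--fold cover $\widetilde{\dX}$, classify there, and descend along the $\ZZ/2$--action --- is an unproved construction rather than a proof: you would need to exhibit an equivalence between $\Rep(\dX)$ and the $\ZZ/2$--equivariant objects of $\Rep(\widetilde{\dX})$, show that tameness descends along such a quotient, and control how the indecomposables fixed by the deck transformation split (this is exactly where the ``special'' strings and bands of the semi--chain combinatorics come from). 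Moreover the whole device is unavailable when $\mathrm{char}\,\kk=2$, and that case cannot be waved away with ``a direct variant of the reduction'': the theorem is stated over an arbitrary algebraically closed field, and characteristic two is the historically central case (Bondarenko's starting point \cite{BondarenkoFirst} was representations of dihedral groups in characteristic $2$). Your closing remark that writing the explicit normal forms ``is routine'' is precisely the claim that failed historically. So either cite \cite{NazarovaRoiter, BondarenkoMain, Deng} outright, as the paper does, or be prepared to carry out Bondarenko's analysis of the self--conjugate reduction step in full detail.
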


\smallskip
\noindent
\emph{Comment to the proof}. A  bunch--of--chains type matrix problems appeared  for the first time in a work of Nazarova and Roiter \cite{NazarovaRoiterDyad} in the context of classification of the indecomposable finite length modules over algebra $\kk\llbracket x, y\rrbracket/(xy)$ (or more generally, over the dyad of two discrete valuation rings). In that work, the canonical forms describing the underlying  indecomposable objects were also constructed. The  chessboard problem from Example \ref{Ex:ChessboardProblem} appeared  for the first time in \cite{NazarovaRoiterDyad}.

Matrix problems of  bunch--of--semi--chains type were introduced in a subsequent  work of the same authors \cite{NazarovaRoiter}. Nazarova and Roiter gave a correct proof of tameness of these  matrix problems. However, the precise combinatorial
pattern of the underlying  indecomposable objects stated in \cite{NazarovaRoiter} turned out to be wrong. The right combinatorics of the indecomposable objects for bunches of semi--chains was established by Bondarenko in \cite{BondarenkoMain}. An elaboration of the reduction algorithm of Nazarova and Roiter for  bunch--of--chains type matrix problems was carried out by Klingler and Levy \cite{KlinglerLevy}, whereas the general bunches of semi--chains were treated  in a work of Deng \cite{Deng}.

\smallskip
In \cite{Nodal}, the authors considered a slight generalization of representations of bunches of semi--chains, which included the case when we have
\emph{countably many}  matrices $\Theta_i$ of finite size. It turned out that the reduction algorithm of \cite{NazarovaRoiter, BondarenkoMain} extends literally
on this case as well, leading to a similar combinatorial pattern for indecomposable objects; see also \cite{BondarenkoBlaBla}. Finally, in \cite{BurbanDrozdMemoirs}, the authors discovered a new type of tame matrix problems called representations of \emph{decorated} bunches of chains.
An interested reader might consult  \cite[Chapter 13]{BurbanDrozdMemoirs} for a detailed treatment of  a class of tame matrix problems, which include representations of a bunch of chains. \qed

\smallskip
Now, we state the classification of the indecomposable objects of the category $\Rep(\dX)$, where
$\dX$ is a bunch of \emph{chains}. Note that in this case we have: $\ddX = \overline{\ddX}$. For any $u \in \ddX$ we denote by $|u|$ its class in
$\widetilde{\ddX}$ and by $Z_{|u|}$ the corresponding object of the category $\catA^\omega$ (which is the additive closure of $\catA$).

\medskip
\noindent
1.~An $\dX$\emph{--word}  is  a sequence
$\underline{v}=x_1 \rho_1 \dots
x_{l-1} \rho_{l-1} x_l$, where $x_i\in \ddX$, $\rho_i\in \bigl\{\sim,-\bigr\}$ and the following conditions are satisfied:
 \begin{itemize}
\item   $x_i \rho_i x_{i+1}$ in $\ddX$ for each $i\in \bigl\{1,2,\dots,l-1\bigr\}$.

\item   $\rho_i\ne \rho_{i+1}$ for each $i\in \bigl\{1,2,\dots,l-2\bigr\}$.

\item  If $x_1$ is tied, then $\rho_1=\sim$, and if $x_l$ is tied, then $\rho_{l-1}=\sim$.
\end{itemize}
 We call $l$ the \emph{length} of the word $\underline{v}$ and denote it by $l(\underline{v})$. Next, we denote by $[\underline{v}]$ the sequence of elements of $\widetilde{\ddX}$ obtained
 from $\underline{v}$ by replacing any subword of the form $x'\sim x''$ by the corresponding element of $\widetilde{\ddX}$ and deleting all symbols $-$.
 For any such subword $x'\sim x''$ we put: $\sigma(x', x'') := 0$ if $(x', x'') \in (\dE \times \dE) \cup (\dF \times \dF)$ and $\sigma(x', x'') := 1$
 otherwise.

\medskip
\noindent
2.~For an $\dX$--word $\underline{v}$ we denote by $\underline{v}^\circ$  its \emph{inverse word} to $\underline{v}$, i.e.~$\underline{v}^\circ=x_l \rho_{l-1}x_{l-1}\dots \rho_2 x_2 \rho_1x_1$.

\medskip
\noindent
3.~An $\dX$--word $\underline{w}$ of length $l$ is called \emph{cyclic} if $\rho_1 = \sim = \rho_{l-1}$ and
we have a relation $x_l-x_1$ in $\ddX$.
 For such a cyclic word we set $\rho_l=\!-$ and define $x_i,\rho_i$ for all $i\in\ZZ$ by the formula
$x_{i+ql}=x_i,\,\rho_{i+ql}=\rho_i$ for any $q\in\ZZ$. In particular, $x_0=x_l$ and $\rho_0=\!-$.
Note that the length of a cyclic word is always even: $l = 2s$ for some $s \in \NN$.

\medskip
\noindent
4.~If $\underline{w}$ is a cyclic word, then  its \emph{$k$--shift} is the word
$
\underline{w}^{(k)}=x_{2k+1} \rho_{2k+1} \rho_{2k+2}\dots x_{2k-1} \rho_{2k-1}x_{2k}.
$
We put: $\sigma(k,\underline{w})=\sum_{j=1}^k\sigma(x_{2j-1},x_{2j})$.
Next, we call  a cyclic word $\underline{w}$ of length $l$   \emph{periodic} if $\underline{w}^{(k)}=\underline{w}$  for
some $k<l$. In what follows, we shall also use the following notation for a cyclic word $\underline{w}$:
$$
\underline{w} := \lha x_1 \sim  x_2 - x_2 \sim x_4 - \dots - x_{2s-1} \sim x_{2s} \rha
$$

\medskip
\noindent
 Now we introduce  the following objects of the bimodule category $\Rep(\dX)$.

\smallskip
\noindent
\underline{\textsl{Strings}}.
 Let $\underline{v}$ be an $\dX$--word.
 Then the  corresponding  \emph{string representation} $S(\underline{v}) = (Z, \vartheta)$ is defined as follows:
\begin{itemize}
\item  $Z=\bop_{|u|\in[\underline{v}]} Z_{|u|}$  and $\vartheta\in \catB(Z, Z)$.

\item  Let
$u_i-u_{i+1}$ be a subword of $\underline{v}$.
Then $\catB(Z, Z)$ has a direct summand $$\catB\bigl(Z_{|u_{i+1}|},Z_{|u_i|}\bigr)
\oplus \catB\bigl(Z_{|u_{i}|}, Z_{|u_{i+1}|}\bigr)$$ and the corresponding component of $\vartheta$ is
\begin{itemize}
\item $ \omega_{u_{i+1} u_i} \in \catB\bigl(Z_{|u_{i}|}, Z_{|u_{i+1}|}\bigr)$ if $u_{i+1} \in \dE$
and $
u_i \in \dF$,
\item
$\omega_{u_{i} u_{i+1}} \in \catB\bigl(Z_{|u_{i+1}|}, Z_{|u_{i}|}\bigr)$ if $u_{i} \in \dE$
and $
u_{i+1} \in \dF$.
\end{itemize}
\item  All other components of $\vartheta$ are set to be zero.
\end{itemize}

\smallskip
\noindent
\underline{\textsl{Bands}}.
 Let $\underline{w}$ be a cyclic $\dX$--word of length $l=2n$, $m \in \NN$ and $\pi \in \kk^\ast$. Let $I$ be the identity matrix of size $m$, whereas $J$ is  a Jordan block
 of size $m$ with eigenvalue $\pi$.
 The \emph{band representation}
$B\bigl(\underline{w}, m,\pi\bigr) = (Z, \vartheta)$ is defined as follows:
\begin{itemize}
\item  $Z=\bop_{|v|\in[\underline{w}]} Z_{|v|}^{\oplus m}$ and  $\vartheta \in\catB(Z,Z)$.

\item  Suppose
$u_i-u_{i+1}$ is a subword of $\underline{w}$. Then $\catB(Z, Z)$ has a
direct summand $$\catB\bigl(Z_{|u_{i+1}|}^{\oplus m}, Z_{|u_i|}^{\oplus m}\bigr) \oplus
\catB\bigl(Z_{|u_{i}|}^{\oplus m}, Z_{|u_{i+1}|}^{\oplus m}\bigr)$$ and we define the
corresponding component of $\vartheta$ as follows:
\begin{itemize}
\item $
\omega_{u_{i+1} u_i} I \in \catB\bigl(Z_{|u_{i}|}^{\oplus m}, Z_{|u_{i+1}|}^{\oplus m}\bigr)$
if $ u_{i} \in \dF$ and $
u_{i+1} \in \dE$,
\item
$
\omega_{u_{i} u_{i+1}} I\in \catB\bigl(Z_{|u_{i+1}|}^{\oplus m}, Z_{|u_{i}|}^{\oplus m}\bigr)$ if $u_{i+1} \in \dF$ and $
u_{i} \in \dE$,
\end{itemize}
 where $I$ is the identity $m \times m$ matrix.

\item  The component of $\vartheta$ corresponding to the direct summand
$$\catB\bigl(Z_{|u_1|}^{\oplus m}, Z_{|u_n|}^{\oplus m}\bigr) \oplus
\catB\bigl(Z_{|u_n|}^{\oplus m}, Z_{|u_1|}^{\oplus m}\bigr)
$$ of $\catB(Z, Z)$
 is defined as
 \begin{itemize}
\item $
 \omega_{u_{1} u_n} J \in \catB\bigl(Z_{|u_{n}|}^{\oplus m}, Z_{|u_{1}|}^{\oplus m}\bigr)$  if $ u_{n} \in \dF$ and $
u_{1} \in \dE$,
\item
$
 \omega_{u_{n} u_{1}} J \in \catB\bigl(Z_{|u_{1}|}^{\oplus m}, Z_{|u_{n}|}^{\oplus m}\bigr)$ if $u_{1} \in \dF$ and $
u_{n} \in \dE$.
\end{itemize}

\item  All other components of $\vartheta$ are zero.
\end{itemize}

\begin{theorem}[see \cite{NazarovaRoiter, BondarenkoMain, BurbanDrozdMemoirs}]\label{T:stringsandbands} Let $\dX$ be a bunch of chains.
Then the  description of the indecomposable objects of $\Rep(\dX)$ is the following.
 \begin{itemize}
\item   Every string or band representation  is indecomposable and every indecomposable
object  of $\Rep(\dX)$  is isomorphic to some string or band representation.

\item  Any string representation is not isomorphic to any band representation.

\item  Two string representations $S(\underline{v})$ and $S(\underline{v}')$ are isomorphic  if and only if $\underline{v}' = \underline{v}$ or $\underline{v}' = \underline{v}^\circ$.

\item  Two band representations $B(\underline{w}, m, \pi)$ and $B(\widetilde{\underline{w}}, \widetilde{m}, \widetilde{\pi})$ are isomorphic if and only if $\widetilde{m} = m$ and
\begin{itemize}
\item $\widetilde{\underline{w}} = \underline{w}^{(k)}$ for some $k \in \ZZ$ and $\tilde{\pi} = \pi$, or
\item $\widetilde{\underline{w}} = {\underline{w}^\circ}^{(k)}$ for some $k \in \ZZ$ and $\widetilde{\pi} = \pi^{\overline{\sigma(k, \underline{w})}}$, where $\overline{\sigma(k, \underline{w})} = (-1)^{\sigma(k, \underline{w})}$. \qed
\end{itemize}
\end{itemize}
\end{theorem}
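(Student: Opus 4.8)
The plan is to prove this by the Nazarova--Roiter reduction algorithm, adapted to bunches of chains, proceeding by induction on the total number of rows of the matrices $\Theta_i$ in a representation. First I would record the "easy" directions. For a string $S(\underline v)$ one checks directly that an endomorphism, being compatible with every admissible row/column operation, must act as a scalar on each summand $Z_{|u|}$, and that these scalars agree along the connected walk $\underline v$; hence $\End\bigl(S(\underline v)\bigr)\cong\kk$ and $S(\underline v)$ is indecomposable. For a band $B(\underline w,m,\pi)$ the same computation shows $\End\bigl(B(\underline w,m,\pi)\bigr)$ is local with residue field $\kk$ (its radical being generated by the image of $J-\pi I$), using that any matrix commuting with a single Jordan block of size $m$ lies in $\kk[J]$. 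This also yields one direction of the isomorphism criteria: $S(\underline v)\cong S(\underline v^\circ)$ because reversing a word is mere relabelling; $B(\underline w,m,\pi)$ is unchanged under a straight cyclic shift $\underline w\mapsto\underline w^{(k)}$; and passing to $\underline w^\circ$ replaces the Jordan block $J$ by a matrix similar to $J^{\pm 1}$, the sign being $\overline{\sigma(k,\underline w)}$ because each traversed conjugate pair of mixed $\dE$--$\dF$ type contributes one inversion to the transport of $J$ around the cycle.

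For the converse — every indecomposable is a string or a band, and there are no further coincidences — I would run the reduction. Choose a minimal $x\in\ddX$, say $x\in\dE_i\cup\dF_i$. The admissible operations that can touch the $x$-stripe of $\Theta_i$ are: adding it to larger stripes in the same chain, and simultaneous invertible transformations with the conjugate stripe $y$ when $x\sim y$ (or with the inverse transformation when $x-y$). Gaussian elimination brings the governing block into the shape $\smtr{\id & 0 \\ 0 & 0}$. This simultaneously splits off finitely many elementary indecomposable summands, corresponding to the short words $x$, $x\sim y$ and $x-y$, and replaces the original problem by a strictly smaller bunch-of-chains problem with a modified partial order — the "derivative" bunch, whose $\sim$ and $-$ relations near the eliminated element are prescribed by the standard recipe. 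By the induction hypothesis the reduced problem has only string and band indecomposables; lifting them back, one checks that one reduction step prepends/appends one or two letters to the underlying word and multiplies the multiplicity/eigenvalue data consistently, so that the total list of indecomposables of $\Rep(\dX)$ is exactly the families $S(\underline v)$ and $B(\underline w,m,\pi)$. The continuous parameter $(m,\pi)\in\NN\times\kk^{\ast}$ first appears precisely when the reduced problem degenerates to the Kronecker (two-arrow) matrix problem, whose regular indecomposables carry that datum.

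The isomorphism statements follow from the same algorithm in a relative form: two representations are isomorphic iff their canonical forms coincide, and the canonical form produced by the algorithm is unique once one fixes, at each stage, a choice between a word and its inverse and, for cyclic words, a starting letter and an orientation. Tracking these choices through the reduction shows that $S(\underline v)\cong S(\underline v')$ forces $\underline v'\in\{\underline v,\underline v^\circ\}$, and that $B(\underline w,m,\pi)\cong B(\widetilde{\underline w},\widetilde m,\widetilde\pi)$ forces $\widetilde m=m$ together with the two stated alternatives for $(\widetilde{\underline w},\widetilde\pi)$, the exponent $\overline{\sigma(k,\underline w)}$ again recording how many mixed-type conjugate pairs are crossed when the cycle is reversed.

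The main obstacle is the reduction algorithm itself: proving that a single reduction step carries a bunch of chains to a bunch of chains — the precise description of the derivative bunch near the eliminated element — that the process terminates, and that reduction histories correspond bijectively to $\dX$-words in a way respecting the equivalences above. A secondary but genuinely delicate point is the Kronecker-problem input: extracting the continuous parameter $\pi$ and establishing the exact law $\widetilde\pi=\pi^{\overline{\sigma(k,\underline w)}}$ under reversal of a cyclic word requires a careful analysis of how an automorphism transports a Jordan block along a closed walk carrying sign-bearing conjugacies. Since a complete treatment is long and is carried out in \cite{NazarovaRoiter} and \cite{BondarenkoMain}, and in the form closest to ours in \cite[Chapter 8]{BurbanDrozdMemoirs}, I would present the argument at the level of this outline and refer there for the omitted case analysis.
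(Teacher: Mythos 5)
The paper gives no proof of this theorem at all: it is stated as a known result, closed with a \qed, and attributed to Nazarova--Roiter, Bondarenko and the authors' Memoir, so your outline of the reduction algorithm, ending with a deferral to exactly those sources, matches the paper's treatment. One small imprecision worth noting: $\End\bigl(S(\underline{v})\bigr)$ need not be isomorphic to $\kk$ (self--overlaps of the word $\underline{v}$ can produce nilpotent endomorphisms); the ring is only local with residue field $\kk$, which is all that indecomposability requires.
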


\begin{example} Consider the  bunch of chains $\dX$ introduced in Example \ref{Ex:bunchesofChains2}.
Let $a \in \widetilde{\ddX}$ (respectively $c, d \in \widetilde{\ddX}$) be the equivalences class of $a_1, a_2 \in \ddX$ (respectively,
$c_1, c_2; d_1, d_2 \in \ddX$). Consider the band datum $(\underline{w}, m, \pi)$, where  $m \in \NN$, $\pi \in \kk^*$ and
$$
\underline{w} := \lha a_1 \sim a_2 - c_2 \sim c_1 - a_1 \sim a_2 - d_2 \sim d_1 -a_1
\sim a_2 - c_2 \sim c_1 \rha
$$
Then $[\underline{w}] = a c a d a c$ and the band object $B(\underline{w}, m, \pi)$ is given by by the pair  $(Z, \vartheta)$, where
$$Z = Z_{a}^{\oplus m} \oplus Z_c^{\oplus m} \oplus Z_{a}^{\oplus m} \oplus Z_{d}^{\oplus m} \oplus Z_{a}^{\oplus m} \oplus
Z_{c}^{\oplus m}$$
and $\vartheta \in \kB(Z, Z)$ is given by the following matrix

\begin{center}
\begin{tikzpicture}
\matrix (first) [tbl5,text width=20pt,  name=table]
{
0               & 0          &0             &0       & 0                &0\\
\phi_2 I & 0       &  \phi_1 I &0       &0                 &0\\
0               & 0          &0             &0       & 0                &0\\
0               &0          & \psi_2 I & 0       & \psi_1 I &0       \\
0               & 0          &0             &0       & 0                &0\\
 \phi_1 J& 0        &0              &0    & \phi_2I      &0\\
};

\node[draw,solid, thick, fit=(table-1-1)(table-6-6), inner sep=0pt]{};

\node[above=3pt of table-1-1]{$a$};
\node[above=3pt of table-1-2]{$c$};
\node[above=3pt of table-1-3]{$a$};
\node[above=3pt of table-1-4]{$d$};
\node[above=3pt of table-1-5]{$a$};
\node[above=3pt of table-1-6]{$c$};


\node[base left=3pt of table-1-1]{$a$};
\node[base left=3pt of table-2-1]{$c$};
\node[base left=3pt of table-3-1]{$a$};
\node[base left=3pt of table-4-1]{$d$};
\node[base left=3pt of table-5-1]{$a$};
\node[base left=3pt of table-6-1]{$c$};

\end{tikzpicture}
\end{center}
In the explicit terms of Example \ref{Ex:bunchesofChains}, it corresponds to the following pair of  matrices:
\newcommand{\sfrm}[3]{
\node[draw,solid, thick, fit=(#1-1-1)(#1-#2-#3), inner sep=0pt]{};}
\begin{center}
  \begin{tikzpicture}

\matrix (first) [tbl5,text width=30pt, minimum height=30pt,  name=table]
{
0   &  I  & 0 \\
  J    &  0 & 0  \\[0.5ex]
   0 & 0 &  I      \\
};
\sfrm{table}{2}{3}; \node[draw, thick, fit=(table-3-1)(table-3-3), inner sep=0pt]{};

\node[above=3pt of table-1-2]{$a_1$};
\draw(table-1-3) to node[midway,base right=23pt]{$c_1$}  (table-2-3);
\node[base right=3pt of table-3-3]{$d_1$};

\draw(table-1-1) to node[midway,base left=25pt]{$\Phi_1\;=$}  (table-2-1);
\node[base left=10pt of table-3-1]{$\Psi_1\;=$};
\foreach \x in {1,2,3}{
\foreach \y in {1,2,3}{
\hdline{table}{\x}{\y};
\vdline{table}{\x}{\y};
}}

\matrix (second) [tbl5,text width=30pt, minimum height=30pt,  name=table]
at(6,0)
{
 I  & 0&0 \\
 0 & 0 & I    \\[0.5ex]
   0 &  I  &0   \\
};
\sfrm{table}{2}{3};
\node[draw, thick, fit=(table-3-1)(table-3-3), inner sep=0pt]{};

\node[above=3pt of table-1-2]{$a_2$};
\draw(table-1-1) to node[midway,base left=23pt]{$c_2$}  (table-2-1);
\node[base left=3pt of table-3-1]{$d_2$};

\draw(table-1-3) to node[midway,base right=25pt, text depth=5pt,]{$=\;\Phi_2$}  (table-2-3);
\node[base right=10pt of table-3-3]{$=\;\Psi_2$};

\foreach \x in {1,2,3}{
\foreach \y in {1,2,3}{
\hdline{table}{\x}{\y};
\vdline{table}{\x}{\y};
}}
\end{tikzpicture}
\end{center}
\end{example}

\begin{remark}
In many  concrete  situations arising in applications, the combinatorics of string and band representations as well as the rules to write the canonical forms, can be significantly simplified; see for instance \cite{Nodal, SurveyBBDG,BurbanGnedin}. As we shall see below, this also happens for the derived categories of a gentle algebra.
\end{remark}

\section{Indecomposables in the  derived category of a gentle algebra}\label{S:GentleCombinatorics}

\noindent
In this section, we shall give a complete classification of the indecomposable objects of the homotopy categories $\mathsf{Hot}^\ast(A)$, where
$* \in \left\{b, +, -, \emptyset\right\}$ and $A$ is a
gentle algebra.

\subsection{Bunch of semi--chains attached to a skew--gentle algebra}
\begin{definition}\label{D:bunchsemichainsMP}
Let $(\vec{\sm}, \simeq)$ be   a datum as in Definition \ref{D:skewgentle}. Then we attach to that the following bunch of semi--chains
$\dX = \dX(\vec{\sm}, \simeq)$.
\begin{itemize}
\item The index set $\Sigma := \Omega \times \ZZ$.
\item For any $((i, j), r) \in \Sigma$, let  $\dE_{((i,j), r)} := \{u_{((i, j), r)}\}$ and $\bigl(\dF_{((i,j), r)}, <\bigr)$ be the totally ordered set defined by (\ref{E:FirstOrder}).
\item The relation $\sim$ is defined by the following rules.
\begin{itemize}
\item If $(i, j) \simeq (i, j)$ in $\Omega$ then we have: $u_{((i, j), r)} \sim u_{((i, j), r)}$ for any $r \in \ZZ$.
\item For any $r \in \ZZ$, $1 \le i \le t$  and $1 \le b < a \le m_i$ we have:
 $q_{((i, b), r)}^{((i, a), r-1)} \sim q^{((i, b), r)}_{((i, a), r-1)}$.
\end{itemize}
\end{itemize}
\end{definition}

\begin{theorem}\label{T:BurbanDrozdTriples} Let $(\vec{\sm}, \simeq)$ be   a datum as in Definition \ref{D:skewgentle}, $A = A(\vec{\sm}, \simeq)$ be the corresponding skew--gentle
 algebra and $\dX = \dX(\vec{\sm}, \simeq)$ be the bunch of semi--chains introduced in Definition \ref{D:bunchsemichainsMP}. Then we have a full functor
\begin{equation}\label{E:Reduction}
\Tri^b(A) \stackrel{\MM}\lar \mathsf{Rep}(\dX),\; \bigl(Y^\bu, V^\bu, \theta\bigr) \lar
\bigl\{\Theta^r_{(i, j)}\bigr\}_{((i, j), r) \in I}
\end{equation}
satisfying the following properties.
\begin{itemize}
\item For any $T_1, T_2 \in \Ob\bigl(\Tri^b(A)\bigr)$ we have: $\MM(T_1) \cong \MM(T_2)$ if and only if $T_1 \cong T_2$.
\item For any $T\in \Ob\bigl(\Tri^b(A)\bigr)$, the object $\MM(T)$ is indecomposable if and only if $T$ is indecomposable.
\end{itemize}
Moreover, the essential image of the functor $\MM$ is given by those collections of decorated matrices  $\bigl\{\Theta^r_{(i, j)}\bigr\}_{((i, j), r) \in \Sigma}$ for which
each entry $\Theta^r_{(i, j)}$ is a square and non--degenerated.
\end{theorem}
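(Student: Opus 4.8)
The plan is to establish Theorem~\ref{T:BurbanDrozdTriples} by unwinding the bookkeeping set up in Subsections~\ref{SS:TriplesandMP} and \ref{SS:TriplesMP}, where essentially all the hard work has already been done. First, I would make precise the assignment on objects: given a triple $(Y^\bu, V^\bu, \theta)$ in $\Tri^b(A)$, replace $Y^\bu$ and $V^\bu$ by minimal complexes (possible by standard homotopy theory over a finite-dimensional algebra, since $\mathsf{Hot}^b(H-\mathsf{pro})$ and $\mathsf{Hot}^b(\bar A-\mathsf{pro})$ have unique minimal representatives up to isomorphism), fix the direct-sum decompositions (\ref{E:ComplexSumOfW}) and (\ref{E:moduleVr}), and then read off the invertible matrices $\Theta^r_{(i,j)} \in \Mat_{m(r,(i,j))}(\kk)$ from the components $\theta^r_{(i,j)}$ of the gluing map as explained in paragraphs~6--7 of Subsection~\ref{SS:TriplesandMP}. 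The decoration---the horizontal stripe division by the totally ordered set $\dF_{((i,j),r)}$ of (\ref{E:FirstOrder}), and, when $(i,j)\simeq(i,j)$, the vertical splitting into the $\pm$ blocks---is exactly the data recorded there, and it is precisely the combinatorial structure of a representation of the bunch of semi--chains $\dX(\vec{\sm},\simeq)$ of Definition~\ref{D:bunchsemichainsMP} (the conjugacy relations $\sim$ match the two constraints on sizes listed in paragraph~2 of Subsection~\ref{SS:TriplesMP}). This defines $\MM$ on objects; on morphisms one sends a pair $(g,h)$ to the induced pair $(\bar g, \bar h)$ of block-structured matrices.

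The next step is to verify that $\MM$ is a functor and that it is \emph{full}. Functoriality follows because the passage from a morphism $(g,h)$ of triples to the induced transformation on the $\bar H$-level is compatible with composition; the point requiring care is that $g$ has many representatives in $\mathsf{Com}^b(H-\mathsf{pro})$, but because the target complex $\bar Y^\bu$ has zero differentials (minimality), the induced map $\bar g$ is independent of the representative, exactly as in Step~3 of the proof of Theorem~\ref{T:mainconstr}. Fullness is the assertion that the admissible row and column transformations of the matrix problem listed in the \textbf{Summary} of Subsection~\ref{SS:TriplesMP} are \emph{all} realized by genuine automorphisms $g \in \Aut_{\mathsf{Hot}^b(H-\mathsf{pro})}(Y^\bu)$ and $h \in \Aut_{\mathsf{Hot}^b(\bar A-\mathsf{pro})}(V^\bu)$ together with homotopies; this was already stated there (``Conversely, any tuples of matrices \dots arise as the images of appropriate automorphisms''), so here I would simply assemble the required $g$ from block-triangular automorphisms of $H$-modules plus the nullhomotopies coming from the morphisms in diagram (\ref{E:hierarchyofmorphisms}), and $h$ from automorphisms of the semisimple module $V^r$. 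Given fullness, the two bullet points (that $\MM$ reflects isomorphism classes and indecomposability of objects) are formal: two triples are isomorphic iff their matrix data are equivalent under the matrix-problem transformations (this is the content of paragraphs~4--5 of Subsection~\ref{SS:TriplesMP}, i.e.\ the transformation rule (\ref{E:transfrule})), and indecomposability is detected through the endomorphism rings, which $\MM$ relates surjectively.

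For the description of the essential image, I would argue in two directions. Any triple produces, by construction, a collection of \emph{square and non--degenerate} (i.e.\ invertible) decorated matrices, since each $\theta^r$ is an isomorphism of $\bar H$-modules. Conversely, given such a collection with the prescribed decoration, paragraph~3 of Subsection~\ref{SS:TriplesMP} shows how the sizes of the blocks reconstruct the multiplicities in (\ref{E:ComplexSumOfW}) and (\ref{E:moduleVr}), hence complexes $Y^\bu$ and $V^\bu$; the matrices themselves, being invertible with the correct block shape, assemble into isomorphisms $\theta^r_{(i,j)}$ and thus into a gluing map $\theta$, yielding a triple mapping to the given data. The main obstacle, in my view, is not any single deep idea but the careful matching of three layers of combinatorics: the stripe labels $\dF_{((i,j),r)}$ and their total order, the $\pm$-splitting forced by the blow-up idempotents in the skew case, and the relation $\sim$ of the bunch of semi--chains---one must check that the row transformations allowed over $H$ (governed by the hierarchy diagram (\ref{E:hierarchyofmorphisms})) translate exactly into the ``add a row of weight $u$ to a row of weight $v$ for $u<v$'' rule and the simultaneous-transformation rule of $\Rep(\dX)$, with no transformation lost and none spuriously added. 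This is the step where the precise form of the differentials $\kappa_{(i,(a,b))}$ and the uniqueness of morphisms between the $Q_{(i,j)}$ must be used, and it is essentially the heart of the argument; everything else is formal manipulation of comma categories and minimal complexes.
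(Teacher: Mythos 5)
Your construction of $\MM$, the verification of fullness, and the description of the essential image all follow the paper's own route through Subsection~\ref{SS:TriplesMP}, and that part of the proposal is sound. The genuine gap is in your treatment of the two bullet points. Fullness of a functor does \emph{not} formally imply that it reflects isomorphism classes: if $\MM(T)\cong\MM(S)$ via mutually inverse $u$ and $v$, fullness only provides lifts $\widetilde{u}$, $\widetilde{v}$ with $\MM(\widetilde{v}\widetilde{u})=\mathrm{id}$, and $\widetilde{v}\widetilde{u}$ need not be invertible. The missing ingredient --- and the only genuinely new point in the paper's proof beyond the bookkeeping of Subsection~\ref{SS:TriplesMP} --- is that every element of the kernel of the ring homomorphism $\End_{\Tri^b(A)}(T)\lar\End_{\Rep(\dX)}\bigl(\MM(T)\bigr)$ is nilpotent; this is extracted from the description of morphisms in $\mathsf{Hot}^b(H-\mathsf{pro})$, i.e.\ from diagram (\ref{E:hierarchyofmorphisms}). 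Granting this, $1_T-\widetilde{v}\widetilde{u}$ is nilpotent, hence $\widetilde{v}\widetilde{u}$ is invertible, and symmetrically $\widetilde{u}\widetilde{v}$ is, so $\widetilde{u}$ is an isomorphism. Reflection of indecomposability is then obtained in the paper not from ``surjectivity on endomorphism rings'' (which by itself proves nothing in the relevant direction) but from the observation that the essential image of $\MM$ is closed under taking direct summands, combined with the reflection of isomorphism classes just established.

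Your fallback justification --- that an isomorphism of the matrix data amounts to invertible block matrices $(\Phi,\Psi)$ satisfying the constraints of paragraph~5 of Subsection~\ref{SS:TriplesMP}, and that these are realized by automorphisms of $Y^\bu$ and $V^\bu$ --- can be made to work, but it silently relies on the unproved converse assertion at the end of that paragraph: one must check that an \emph{invertible} collection $\bigl\{\Phi^r_{(i,j)}\bigr\}$, which is block lower--triangular with respect to the order on $\dF_{((i,j),r)}$ and therefore has invertible diagonal blocks, lifts to a chain map $Y^\bu\lar Y^\bu$ that is an isomorphism in $\mathsf{Hot}^b(H-\mathsf{pro})$ and not merely an endomorphism. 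That verification is essentially equivalent to the nilpotency statement above (invertibility of the diagonal blocks controls $g$ modulo radical terms), so on either route this argument must be supplied explicitly; as written, the step is missing.
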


\begin{proof} In Subsection \ref{SS:TriplesMP}, we essentially gave a construction of the  functor $\MM$.
It is clear that $\MM$ is full,  the statement about the description of the essential image of  $\MM$ is  obvious.

\smallskip
\noindent
It follows from the description
of morphisms in the category $\mathsf{Hot}^b(H-\mathsf{mod})$ (see again diagram (\ref{E:hierarchyofmorphisms})) that for any object
$T$ of the category $\Tri^b(A)$, all elements of $
\End_{\Tri^b(A)}(T)$  lying in  the kernel of the map
$
\End_{\Tri^b(A)}(T) \xrightarrow{\MM} \End_{\Rep(\dX)}\bigl(\MM(T)\bigr)
$
are nilpotent. Assume that $\MM(T) \cong \MM(S)$ in $\Rep(\dX)$. Then we have mutually inverse isomorphisms $\MM(T) \stackrel{u}\lar \MM(S)$
and $\MM(S) \stackrel{v}\lar \MM(T)$. Since $\MM$ is full, there exists morphisms $T \stackrel{\widetilde{u}}\lar S$
and $S \stackrel{\widetilde{v}}\lar T$ such that $u = \MM(\widetilde{u})$ and $v = \MM(\widetilde{u})$. Since
$\MM\bigl(1_{T} - \widetilde{v} \widetilde{u}\bigr) = 0$, the endomorphism $1_{T} - \widetilde{v} \widetilde{u}$ is nilpotent.
As a consequence, the morphism
$
\widetilde{v} \widetilde{u} = 1_{T} - \bigl(1_{T} - \widetilde{v} \widetilde{u}\bigr)
$
is invertible. Analogously, $\widetilde{u} \widetilde{v}$ is invertible as well, hence both morphisms $\widetilde{u}$ and  $\widetilde{u}$ are isomorphisms. Therefore, the functor $\MM$ reflects isomorphism classes of objects. Since the essential image of $\MM$ is closed under taking direct summands, $\MM$ also reflects indecomposability of objects.
\end{proof}

\medskip
\noindent
\textbf{Summary}. We have constructed a pair of functors $\EE$ and $\MM$:
\begin{equation}\label{E:Summary}
\mathsf{Hot}^b(A-\mathsf{pro}) \stackrel{\EE}\lar \Tri^b(A) \stackrel{\MM}\lar \Rep(\dX),
\end{equation}
both reflecting  the isomorphism classes and indecomposability of objects; see Theorem \ref{T:mainconstr} and Theorem \ref{T:BurbanDrozdTriples}. Moreover, the functor $\EE$ even induces a bijections between the isomorphism classes of objects of the corresponding categories.  Hence, starting with  a classification of indecomposable objects of the category  $\Rep(\dX)$, we can deduce from it  a description of the indecomposable objects of the triangulated category $\mathsf{Hot}^b(A-\mathsf{pro})$.

\subsection{Indecomposable objects of the perfect derived category of a gentle algebra}
From now on, assume that the datum $(\vec{\sm}, \simeq)$ is  such that $(i, j) \not\simeq (i, j)$ for any $(i, j) \in \Omega = \Omega(\vec{\sm})$.
We shall say that an element $(i, j) \in \Omega$ is \emph{tied} if there exists $(k, l) \in
\Omega$ such that $(i, j) \simeq (k, l)$.
Recall that the  algebra $A = A(\vec{\sm}, \simeq)$ is \emph{gentle} and $\dX = \dX(\vec{\sm}, \simeq)$ is a bunch of \emph{chains}.

In this subsection, we are going to explain a classification of indecomposable objects of the perfect derived   category $\mathsf{Hot}^b(A-\mathsf{pro})$.

As it was explained in course of the discussion of Theorem \ref{T:BunchOfSemiChains}, there are two types of indecomposable objects of the
category $\Rep(\dX)$: strings $S(\underline{v})$ and bands $B(\underline{w}, m, \pi)$. A special feature of $\dX$ is the following: the only untied elements of $\ddX$ are of the form $q_{((i, b), r)}^{((i, m_i+1), r-1)}$ for any $((i, b), r) \in \Sigma$, as well as $u_{((j, p),r)}$, where $(j, p) \in \Omega$ is untied.

\begin{lemma}\label{L:reductionOne} In the notation of Definition \ref{D:bunchsemichainsMP}, the following results are true.

\smallskip
\noindent
1.~For any non--periodic cyclic word $\underline{w}$ in $\dX$, $m \in \NN$ and $\pi \in \kk$, the corresponding band object $B(\underline{w}, m, \pi)$ belongs
to the essential image of the functor $\MM$.

\smallskip
\noindent
2.~Let $\underline{v}$ be a $\dX$ word.  Then the corresponding string object $S(\underline{v})$ belongs to the essential image of the functor $\MM$ if and only if
$\underline{v}$ contains elements from both sets $\dE$ and $\dF$ and begins and ends with an \emph{untied} element.
\end{lemma}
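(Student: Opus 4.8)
The plan is to analyze when a string or band object of $\Rep(\dX)$ lies in the essential image of $\MM$, which by Theorem \ref{T:BurbanDrozdTriples} consists precisely of those collections $\{\Theta^r_{(i,j)}\}$ in which every matrix $\Theta^r_{(i,j)}$ is \emph{square and invertible}. So the whole lemma reduces to a bookkeeping statement: for a given string or band representation, determine, for each pair $((i,j),r) \in \Sigma$, the number of rows and the number of columns of the block $\Theta^r_{(i,j)}$, and decide when these agree and the block is nondegenerate. The key structural fact to exploit is that in $\dX$ each $\dE_{((i,j),r)}$ is a \emph{singleton} $\{u_{((i,j),r)}\}$, so the column stripes of $\Theta^r_{(i,j)}$ carry a single label, while the row stripes are labelled by the chain $\dF_{((i,j),r)}$ from \eqref{E:FirstOrder}; moreover the only untied elements of $\ddX$ are the $q_{((i,b),r)}^{((i,m_i+1),r-1)}$ and the $u_{((j,p),r)}$ with $(j,p)$ untied. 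I will keep returning to the rule from the \textbf{Remark} following the bunch-of-chains definition: the multiplicity with which a given label $x \in \overline{\ddX}$ contributes a row (resp. column) block to a string $S(\underline v)$ equals the number of letters of $\underline v$ equal to $x$ (for bands, this is multiplied by $m$ and $[\underline w]$ is read cyclically).

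First I would handle part 1, the band case. Because a cyclic word $\underline w$ alternates $\rho$'s and by definition starts and ends with $\sim$, every letter of $\underline w$ lies in some tie; thus every label appearing in $[\underline w]$ is tied, and the contribution of the band to the two stripes of any conjugate pair is matched by construction. I would argue that since each $\dE$-set is a singleton, whenever the letter $u_{((i,j),r)}$ appears in $\underline w$ it appears in a subword $u_{((i,j),r)} \sim u_{((i,j),r)}$ (in the gentle case $u \sim u$ never holds, but $u$ can be tied to an $\dE$-label in another index, forced by the alternation $\rho_i \ne \rho_{i+1}$ to pass through $-$ then $\sim$), so that the number of columns of $\Theta^r_{(i,j)}$ equals the number of rows needed to glue; the point is that the conjugacy relations of $\dX$ (both $u \sim u$-type and $q \sim q$-type) exactly force, for a cyclic word, the row-count of each $\Theta^r_{(i,j)}$ to equal its column-count. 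Non-periodicity of $\underline w$ is used to ensure the resulting band object is genuinely indecomposable and that the Jordan-block construction is consistent (a periodic $\underline w$ would produce a decomposable object, hence not one reflected through $\EE \circ \MM$ from an indecomposable complex). Then squareness plus the fact that the gluing morphism $\theta$ of any triple is by definition an \emph{isomorphism} gives nondegeneracy, so $B(\underline w, m, \pi)$ is in the image.

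For part 2, the string case, I would do the same count and read off the obstruction. A string $S(\underline v)$ contributes to $\Theta^r_{(i,j)}$ a column block precisely when the singleton letter $u_{((i,j),r)}$ occurs in $\underline v$, and a row block for each occurrence of a label $q_{((i,j),r)}^{\bullet}$; for the block to be square we need the total number of such column-letters to equal the total number of row-letters lying ``over'' $(i,j)$ at degree $r$. If $\underline v$ contains only letters from $\dE$ (resp. only from $\dF$), some $\Theta^r_{(i,j)}$ will have columns but no rows (resp. rows but no columns), hence be non-square — so $\underline v$ must meet both $\dE$ and $\dF$. The endpoint condition is the subtler half: if $\underline v$ begins with a \emph{tied} element $x_1$, then the rules for $\dX$-words force $\rho_1 = \sim$, and $x_1$ is glued (via $\sim$) to a conjugate stripe in some $\Theta^{r'}_{(k,l)}$, but the word contributes nothing on the ``other side'' of that stripe at the endpoint, producing a mismatch of row/column counts exactly at the block containing $x_1$'s conjugate; conversely if $x_1$ is untied — i.e. of the form $q_{((i,b),r)}^{((i,m_i+1),r-1)}$ or $u_{((j,p),r)}$ with $(j,p)$ untied — no such gluing constraint is imposed and the count balances. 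The same analysis applies at the other end with $x_l$. I expect the \textbf{main obstacle} to be exactly this endpoint bookkeeping: one must carefully track, using the explicit order \eqref{E:FirstOrder} and the morphism hierarchy \eqref{E:hierarchyofmorphisms}, how a tied boundary letter of $\underline v$ forces an unbalanced stripe, and verify that an untied boundary letter does not — this requires a small but careful case distinction between the two kinds of untied elements and, in the tied case, between $\sim$ gluing that lands in $\dE\times\dE$, $\dF\times\dF$, or $\dE\times\dF$ type pairs. Once the counts are pinned down, squareness is equivalent to the stated combinatorial condition, and nondegeneracy of the resulting decorated matrices is automatic because they assemble into the gluing isomorphism $\theta$ of a genuine triple; so $S(\underline v)$ is in the essential image of $\MM$ if and only if $\underline v$ meets both $\dE$ and $\dF$ and has untied endpoints.
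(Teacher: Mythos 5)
Your proposal is correct and follows essentially the same route as the paper, which simply invokes the characterization of the essential image of $\MM$ (all $\Theta^r_{(i,j)}$ square and non--degenerate) together with the explicit form of $S(\underline{v})$ and $B(\underline{w}, m, \pi)$ from Theorem \ref{T:stringsandbands}; your row/column bookkeeping for the singleton sets $\dE_{((i,j),r)}$ versus the chains $\dF_{((i,j),r)}$, and the observation that a tied endpoint forces $\rho_1=\sim$ and hence an unmatched (zero) stripe, is exactly the intended argument. One small phrasing issue: non--degeneracy should not be justified by saying the matrices ``assemble into the gluing isomorphism of a genuine triple'' (that presupposes the conclusion); it follows directly because each occurrence of a letter in an admissible word is $-$--linked exactly once, so every row and column of each square block carries exactly one non--zero entry, making it a permutation matrix (resp.\ built from identity blocks and one Jordan block with eigenvalue $\pi \ne 0$).
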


\begin{proof} An object of $\Rep(\dX)$ given by a set  of decorated  matrices $\left\{\Theta^r_{(i, j)}\right\}_{((i, j), r) \in \Sigma}$
belongs to the essential image of the functor $\MM$ if and only if all $\Theta^r_{(i, j)}$ are square and non--degenerate.
The statement follows
from the explicit descriptions of $B(\underline{w}, m, \pi)$ and $S(\underline{v})$, given in Theorem \ref{T:BunchOfSemiChains}.
\end{proof}

\begin{remark}
It turns out that the combinatorics of bands $(\underline{w}, m, \pi)$ and strings $\underline{v}$ in the bunch of chains $\dX$,
which satisfy  the conditions of  Lemma \ref{L:reductionOne}, admits the following  significant simplification: having a cyclic word
$\underline{w}$ or a word $\underline{v}$, we can cross all elements from the set $\dE$. Let $\overline{w}$ and $\overline{v}$ be the obtained words.
Then no information is lost under this operation: the words $\underline{w}$ and $\underline{v}$ can be reconstructed   from $\overline{w}$ and $\overline{v}$.
\end{remark}

\begin{definition}\label{D:BandStringsReduced} We introduce the following reduced version of strings and bands, introduced in Theorem \ref{T:BunchOfSemiChains}.

\smallskip
\noindent
A \emph{band datum} is a tuple  $(\overline{w}, m, \pi)$, where $m \in \NN$, $\pi \in \kk^*$ and $\overline{w}$ is a non--periodic sequence
$$
\overline{w} = \lha x_1 \sim x_2 - x_3 \sim x_4 - \dots - x_{2s-1} \sim x_{2s} \rha
$$
such that
\begin{itemize}
\item For any $1 \le e \le s$, each subword $x_{2e-1} \sim x_{2e}$ of $\underline{w}$ is of the form $q_{((i, k), r)}^{((i, l), r-1)}\sim q^{((i, k), r)}_{((i, l), r-1)}$
or $q^{((i, k), r)}_{((i, l), r-1)} \sim q_{((i, k), r)}^{((i, l), r-1)}$ for some $r \in \ZZ$, $1 \le i \le t$ and $1 \le k < l \le m_i$.
\item For any $1 \le e \le s$, each subword $x_{2e}-x_{2e+1}$ of $\overline{w}$  satisfies the following condition: if $x_{2e} = q_{((i, k), r)}^{((i, \bar{k}), r\pm 1)}$
then $x_{2e+1} = q_{((j, l), r)}^{((j, \bar{l}), r\pm 1)}$ for some $1 \le k \ne \bar{k} \le m_i$, $1 \le l \ne \bar{l} \le m_j$ and
$(i, k) \simeq (j, l)$ in $\Omega$. Here, we put $x_{2s+1} := x_1$.
\end{itemize}

\smallskip
\noindent
A \emph{string datum} $\overline{v}$ can be  of the following four types.

\smallskip
\noindent
1.~$\overline{v} = x_1 \sim x_2 - x_3 \sim x_4 - \dots - x_{2s-1} \sim x_{2s}$, where all pairs $x_{2e-1} \sim x_{2e}$ and $x_{2e} - x_{2e+1}$ satisfy the same constraints as for bands, but additionally $x_1 = q_{((i, k), r)}^{((i, \bar{k}), r \pm 1)}$ and $x_{2s} =
    q_{((j, l), d)}^{((j, \bar{l}), d \pm 1)}$ are such that $(i, k)$ and $(j, l)$ are untied in $\Omega$.

\smallskip
\noindent
2.~$\overline{v} = x_0 - x_1 \sim x_2 - x_3 \sim x_4 - \dots - x_{2s-1} \sim x_{2s}$ with  $x_0 = q_{((i, k), r)}^{((i, m_i+1), r - 1)}$ for some
$((i, k), r) \in I$, whereas
$x_{2s}$ as well as all $\sim$ and $-$ satisfy the constraints from  the previous paragraph. If $(i, k) \in \Omega$ is tied then necessarily $s \ge 1$, otherwise $s = 0$.

\smallskip
\noindent
3.~$\overline{v} = \overline{u}^\circ$, where $\overline{u}$ is as in the previous paragraph.

\smallskip
\noindent
4.~Finally, we may have $\overline{v} = x_0 - x_1 \sim x_2 - x_3 \sim x_4 - \dots - x_{2s-1} \sim x_{2s}-x_{2s+1}$ with  $x_0 = q_{((i, k), r)}^{((i, m_i+1), r - 1)}$ and $x_{2s+1} = q_{((j, l), d)}^{((j, m_j+1), d - 1)}$ for some
$((i, k), r), ((j, l), d) \in \Sigma$.

\smallskip
\noindent
The notions of the shift $\overline{w}^{(k)}$ of a cyclic word $\overline{w}$ for $\kk \in \ZZ$ as well as of the opposite words $\overline{w}^\circ$ and
$\overline{v}^\circ$ are  straightforward. \qed
\end{definition}

\begin{definition}[Gluing diagrams]\label{D:gluingdiagrams} Let $(\overline{w}, m, \pi)$ be a band datum and $\overline{v}$ be a string datum, as in above Definition
\ref{D:BandStringsReduced}.

\smallskip
\noindent
1.~We attach to $\overline{v}$  a so--called  \emph{gluing diagram} of indecomposable complexes $W^\bu_{(i, (a, b))}[-r]$, constructed by the following algorithm.
\begin{itemize}
\item For any $r \in \ZZ$, $1 \le i \le t$ and $1 \le a < b \le m_i$ we replace each subword of $\overline{v}$ of  the form
$q_{(((i, b), r)}^{((i, a), r-1)} \sim q^{(((i, b), r)}_{((i, a), r-1)}$ or $q^{(((i, b), r)}_{((i, a), r-1)}\sim q_{(((i, b), r)}^{((i, a), r-1)}$ by
the complex $W^\bu_{(i, (a, b))}[-r]$, written in a separate line.
\item Analogously, any subword of $\overline{v}$ of the form $q_{(((i, b), r)}^{(i, m_i+1), r-1)}$ is replaced by the complex $W^\bu_{(i, (m_i+1, b))}[-r]$, written in a separate line.
\item For each subword of $\overline{v}$ of the form
$$
q_{((i, j), r)}^{((i, \bar{j}), r \pm 1)} -  q_{((k, l), r)}^{((k, \bar{l}), r \pm 1)}
$$
for some $1 \le j \ne \bar{j} \le m_i+1$ and $1 \le l \ne \bar{l} \le m_{k}+1$, we connect  the corresponding terms  $Q_{(i, j)}$ and $Q_{(k, l)}$ by a dotted line.
\end{itemize}

\smallskip
\noindent
2.~Let
$\overline{w} := \lha x_1 \sim x_2 - \dots - x_{2n-1} \sim x_{2n}  \rha
$
be  a non--periodic cyclic word,
then $x_1 = q_{((i, j), r)}^{((i, \bar{j}), r \pm 1)}$ and $x_{2n} = q_{((k, l), r)}^{((k, \bar{l}), r \pm 1)}$ for some $r \in \ZZ$,
$(i, j) \simeq (k, l)$ in $\Omega$ as well as  appropriate $1 \le j \ne \bar{j} \le m_i$ and $1 \le l \ne \bar{l} \le m_{k}$.
\begin{itemize}
\item In the first step, we  apply the procedure described in the first part to the word $x_1 \sim x_2 - \dots - x_{2n-1} \sim x_{2n}$, viewed as a string parameter.
\item Let $x_1 = q_{((i, j), r)}^{((i, \bar{j}), r \pm 1)}$ for some $r \in \ZZ$ and $1 \le j \ne \bar{j} \le m_i$. Then we have:
$x_{2n} = q_{((k, l), r)}^{((k, \bar{l}), r\pm 1)}$, where $(i, j) \simeq (k, l)$ in $\Omega$ and $1 \le l \ne \bar{l} \le m_k$.
As the next step,   we connect
the terms  $Q_{(i, j)}^{\oplus m}$ and $Q_{(k, l)}^{\oplus m}$ of the corresponding indecomposable complexes by a dotted line.
\item We replace each complex $W^\bu_{(e, (a, b))}[-r]$ appearing in the gluing diagram by the complex $\bigl(W^\bu_{(e, (a, b))}[-r]\bigr)^{\oplus m}$.
\item Finally, we  replace the differential $\kappa I_m$ of the bottom complex (arising from the subword $x_{2n-1} \sim x_{2n}$) by the differential $\kappa J_m(\pi)$.
\end{itemize}
\end{definition}

\begin{example}\label{Ex:Myfavorite2} Let $A$ be the gentle algebra from Example \ref{Ex:MyFavorite}.

\smallskip
\noindent
1.~Consider the band datum $(\underline{w}, m, \pi)$, where $\underline{w} = $
\begin{align*}
&  \lha q_{((2, 3), -2)}^{((2,2),-1)} \sim q^{((2, 3), -2)}_{((2,2),-1)} - u_{((2,2),-1)} \sim u_{((1,2),-1)}  \\
&       -  q_{((1, 2), -1)}^{((1,1),0)} \sim  q^{((1, 2), -1)}_{((1,1),0)}  - u_{((1,1),0)} \sim u_{((2,1),0)}  \\
&     -  q_{((2, 1), 0)}^{((2,2),-1)} \sim  q^{((2, 1), 0)}_{((2,2),-1)} - u_{((2,2),-1)} \sim u_{((2,1),-1)}  \\
&     - q_{((1, 2), -1)}^{((1,3),-2)} \sim q^{((1, 2), -1)}_{((1,3),-2)}  - u_{((1,3),-2)} \sim u_{((2,3),-2)} \rha  \\
\end{align*}
Without loss of information, we may replace $\underline{w}$ by its reduced version $\overline{w} = $
$$
 \lha q_{((2, 3), -2)}^{((2,2),-1)} \sim q^{((2, 3), -2)}_{((2,2),-1)}
       -  q_{((1, 2), -1)}^{((1,1),0)} \sim  q^{((1, 2), -1)}_{((1,1),0)}  -
       q_{((2, 1), 0)}^{((2,2),-1)} \sim  q^{((2, 1), 0)}_{((2,2),-1)}     - q_{((1, 2), -1)}^{((1,3),-2)} \sim q^{((1, 2), -1)}_{((1,3),-2)}   \rha  \\
$$
Then we get the following  gluing diagram:
\begin{equation}\label{E:bandExGluing}
\begin{array}{c}
\xymatrix{
Q_{(2, 3)}^{\oplus m} \ar[r]^-{d I}  \ar@{.}[ddd] & Q_{(2, 3)}^{\oplus m} \ar@{.}[d]&  \\
& Q_{(1, 2)}^{\oplus m} \ar[r]^-{a I} & Q_{(1, 1)}^{\oplus m} \ar@{.}[d]\\
& Q_{(2, 2)}^{\oplus m} \ar[r]^-{c I} \ar@{.}[d] & Q_{(2, 1)}^{\oplus m} \\
Q_{(1, 3)}^{\oplus m} \ar[r]^-{b J} & Q_{(1, 2)}^{\oplus m} &  \\
}
\end{array}
\end{equation}
Here, $I$ is the identity matrix of size $m$, whereas $J$ is the Jordan block of size $m$ with eigenvalue $\pi$.

\smallskip
\noindent
2.~Consider the string datum $\underline{v}$ given by the full word
\begin{align*}
&  \quad \quad \quad \quad \;\; \;  q^{((2, 3), -1)}_{((2,4),-2)} - \; u_{((2,3),-1)} \sim  u_{((2,3),-1)} - \\
&  q_{((1, 3), -1)}^{((1,1),0)} \sim  \, q^{((1, 3), -1)}_{((1,1),0)} - u_{((1,1),0)} \sim \; \; u_{((2,1),0)} \; - \\
&  q_{((2, 1), 0)}^{((2,2),-1)} \sim  q^{((2, 1), 0)}_{((2,2),-1)}  - u_{((2,2),-1)} \sim u_{((1,2),-1)} - \\
&  q_{((1, 2), -1)}^{((1,3),-2)} \sim  q^{((1, 2), -1)}_{((1,3),-2)}  - u_{((1,3),-2)} \sim u_{((2,3),-2)} - \\
& q_{((2, 3),-2)}^{((2,4),-3)}
\end{align*}
The corresponding reduced version $\overline{v}$ is
$$
q^{((2, 3), -1)}_{((2,4),-2)} -
  q_{((1, 3), -1)}^{((1,1),0)} \sim  \, q^{((1, 3), -1)}_{((1,1),0)} -
  q_{((2, 1), 0)}^{((2,2),-1)} \sim  q^{((2, 1), 0)}_{((2,2),-1)}  -
  q_{((1, 2), -1)}^{((1,3),-2)} \sim  q^{((1, 2), -1)}_{((1,3),-2)}  -
q_{((2, 3),-2)}^{((2,4),-3)}
$$
It defines the following gluing diagram:
\begin{equation}\label{E:stringExGluing}
\begin{array}{c}
\xymatrix{
 & Q_{(2, 3)}  \ar@{.}[d]           &                       \\
 & Q_{(1, 3)} \ar[r]^{ba} & Q_{(1, 1)} \ar@{.}[d] \\
 & Q_{(2, 2)} \ar[r]^c \ar@{.}[d] & Q_{(2, 1)}  \\
 Q_{(1, 3)} \ar[r]^b \ar@{.}[d] & Q_{(1, 2)} & \\
 Q_{(2, 3)}
}
\end{array}
\end{equation}
\end{example}

\begin{definition}\label{D:FromGluingToComplexes} Let $(\overline{w}, m, \pi)$ be a band datum and $\overline{v}$ be a string datum as in Definition \ref{D:BandStringsReduced}.
We define the corresponding band complexes $B^\bu(\overline{w}, m ,\pi)$ and string complexes $S^\bu(\overline{v})$ in the homotopy category
$\mathsf{Hot}^b(A-\mathsf{pro})$  by the following procedure.
\begin{itemize}
\item We construct gluing diagrams corresponding to these band or string data.
\item Any time we have projective $H$--modules $Q_{(i, j)}$ and $Q_{(k, l)}$ connected by a dotted arrow, we merge them into the projective
$A$--module $P_\gamma$, where $$\gamma = \overline{\{(i, j), (k, l)\}} \in \widetilde{\Omega}.$$
\item Any projective $H$--module $Q_{(i, j)}$ which is not linked by a dotted arrow with another projective module, has to be replaced by
$P_{\gamma}$, where $\gamma = (i, j) \in \widetilde{\Omega}$
 (in this case, $\gamma$ is automatically an element of the third type).
 \item Taking the direct sums of all projective $A$--modules lying in the same degree, and $A$--linear maps, inherited from the gluing diagram, we get
 a sequence a projective modules and $A$--linear homomorphisms between them.
 \end{itemize}
\end{definition}

\begin{remark} Let $(\overline{w}, m, \pi)$ (respectively $\overline{v}$) be a band (respectively, string) datum. Then for $\overline{u} \in \left\{ \overline{w}, \overline{v}\right\}$, we naturally get a minimal complex of projective $H$--modules
$$
Y^\bu = Y^\bu(\overline{u}) = \left(\dots \lar Y^{r-1} \stackrel{d^{r-1}}\lar Y^r \stackrel{d^r}\lar Y^{r+1} \lar \dots \right)
$$
obtained by taking the direct sum of all indecomposable complexes occurring in the corresponding gluing diagram.
Let $X^\bu$ be the sequence
of projective $A$--modules
and $A$--linear homomorphisms, constructed in Definition \ref{D:FromGluingToComplexes}. Then we have the following commutative diagram in the category of $A$--modules:
$$
\xymatrix{
\dots \ar[r] & X^{r-1} \ar[r]^-{d^{r-1}} \ar@{^{(}->}[d] &  X^r \ar[r]^-{d^r} \ar@{^{(}->}[d] & X^{r+1} \ar[r] \ar@{^{(}->}[d]& \dots \\
\dots \ar[r] & Y^{r-1} \ar[r]^-{d^{r-1}} &  Y^r \ar[r]^-{d^r} & Y^{r+1} \ar[r] & \dots
}
$$
Since all vertical maps in this diagram are embeddings, $X^\bu$ is indeed a complex. Moreover, the embedding $X^\bu \hookrightarrow Y^\bu$ induced an isomorphism $H \otimes_A X^\bu \lar Y^\bu$.
\end{remark}

\begin{example}\label{Ex:Myfavorite3} Let $A$ be the gentle algebra from Example \ref{Ex:MyFavorite}.  Let us construct the band complex
and the string complex, corresponding to the gluing diagrams from Example \ref{Ex:Myfavorite2}.

\smallskip
\noindent
1.~The gluing diagram (\ref{E:bandExGluing}) defines the following complex of projective $A$--modules
$$
\xymatrix{
 & P_2^{\oplus m} \ar[rd]^-{aI} & \\
P_3^{\oplus m} \ar[ru]^-{dI} \ar[rd]_-{bJ} & \bigoplus & P_1^{\oplus m} \\
& P_2^{\oplus m} \ar[ru]_-{cI} &
}
$$
or, in the conventional terms, the complex
\begin{equation}
\dots \lar 0 \lar P_3^{\oplus m} \xrightarrow{\left(\begin{smallmatrix}dI\\ bJ \end{smallmatrix}\right)} P_2^{\oplus 2m} \xrightarrow{\left(\begin{smallmatrix}aI &  cI \end{smallmatrix}\right)} \underline{P_1^{\oplus m}} \lar 0 \lar \dots,
\end{equation}
where the underlined component   is located in the zero degree.

\smallskip
\noindent
2.~The gluing diagram (\ref{E:stringExGluing}) defines the following complex of projective $A$--modules
$$
\xymatrix{
 & P_3  \ar[rd]^-{ba} & \\
 & \bigoplus & P_1 \\
 & P_2  \ar[ru]^-{c} & \\
 P_3 \ar[ru]^-{b} & &
}
$$
or, in the conventional notation, the complex
\begin{equation}
\dots \lar 0 \lar P_3  \xrightarrow{\left(\begin{smallmatrix}0 \\ b \end{smallmatrix}\right)} P_3 \oplus P_2 \xrightarrow{\left(\begin{smallmatrix}ba &  c \end{smallmatrix}\right)} \underline{P_1} \lar 0 \lar \dots
\end{equation}
where the underlined term is again located in the zero degree.
\end{example}

\begin{theorem}\label{T:IndecGentle} Let $A = A(\vec{\sm}, \simeq)$ be a gentle algebra. Then the following results are true.
\begin{itemize}
\item Let  $X^\bu$ be an indecomposable object of $\mathsf{Hot}^b(A-\mathsf{pro})$. Then $X^\bu$ is isomorphic either to some  band complex
$B^\bu(\overline{w}, m, \pi)$ or to some string complex $S^\bu(\overline{v})$ (see Definition \ref{D:BandStringsReduced} and Definition \ref{D:FromGluingToComplexes}).
\item $B^\bu(\overline{w}, m, \pi) \not\cong S^\bu(\overline{v})$ for any band datum $(\overline{w}, m, \pi)$ and string datum $\overline{v}$.
\item We have: $S^\bu(\overline{v}) \cong S^\bu(\overline{v}')$ if and only if $\overline{v}' = \overline{v}$ or $\overline{v}' = \overline{v}^\circ$.
\item Finally, $B^\bu(\overline{w}, m, \pi) \cong B^\bu(\overline{w}', m', \pi)$ if and only if $m = m'$ and
$$
(\overline{w}', \pi') = \left\{
\begin{array}{l}
(\overline{w}^{(k)}, \pi) \\
({\overline{w}^{\circ}}^{(k)}, \pi^{-1})
\end{array}
\right.
$$
for some shift $k \in \ZZ$.
\end{itemize}
\end{theorem}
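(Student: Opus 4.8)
I would derive the classification by composing the two reduction functors
$$
\mathsf{Hot}^b(A-\mathsf{pro}) \stackrel{\EE}\lar \Tri^b(A) \stackrel{\MM}\lar \Rep(\dX)
$$
of Theorem \ref{T:mainconstr} and Theorem \ref{T:BurbanDrozdTriples} with the classification of indecomposable representations of a bunch of chains, Theorem \ref{T:stringsandbands}. Since $\EE$ induces a bijection on isomorphism classes, $\MM$ reflects isomorphism classes, and both reflect indecomposability, the rule $X^\bu \mapsto \MM(\EE(X^\bu))$ sets up a bijection between isomorphism classes of indecomposable objects of $\mathsf{Hot}^b(A-\mathsf{pro})$ and isomorphism classes of indecomposable objects of $\Rep(\dX)$ lying in the essential image of $\MM$ (which coincides with that of $\MM \circ \EE$, as $\EE$ is essentially surjective). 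By Theorem \ref{T:stringsandbands} every such object is a string $S(\underline{v})$ or a band $B(\underline{w}, m, \pi)$, and by Lemma \ref{L:reductionOne} the ones in the essential image of $\MM$ are exactly the non-periodic bands together with the strings whose word $\underline{v}$ uses letters from both $\dE$ and $\dF$ and starts and ends with an untied letter. Replacing $\underline{w}$ and $\underline{v}$ by the reduced words $\overline{w}$ and $\overline{v}$ of Definition \ref{D:BandStringsReduced} --- an invertible operation --- identifies these objects with the band data $(\overline{w}, m, \pi)$ and string data $\overline{v}$.

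\textbf{Realizing the explicit complexes.} The main point is that $B^\bu(\overline{w}, m, \pi)$ and $S^\bu(\overline{v})$, built from the gluing diagrams of Definitions \ref{D:gluingdiagrams} and \ref{D:FromGluingToComplexes}, are precisely the complexes whose images under $\MM \circ \EE$ are $B(\underline{w}, m, \pi)$, resp.\ $S(\underline{v})$. I would verify this using the embedding $X^\bu \hookrightarrow Y^\bu$ from the Remark after Definition \ref{D:FromGluingToComplexes}, which shows that $H \otimes_A X^\bu \cong Y^\bu$ is the direct sum, over the nodes of the gluing diagram, of the indecomposable complexes $W^\bu_{(i,(a,b))}[-r]$, so that the splitting (\ref{E:ComplexSumOfW}) is read off the word; that $\bar{A} \otimes_A X^\bu$ is the $\bar{A}$-module determined by the dotted links (each link merging two summands $Q_{(i,j)}, Q_{(k,l)}$ into $P_\gamma$); and that the gluing morphism $\theta$, hence the decorated matrices $\Theta^r_{(i,j)}$ produced by the recipe of Subsections \ref{SS:TriplesandMP}--\ref{SS:TriplesMP}, records exactly which summands are merged, carrying an identity block on each ordinary link and the Jordan block $J_m(\pi)$ on the distinguished differential of a cyclic diagram. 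Matching this decorated matrix with the explicit string and band representations defined just before Theorem \ref{T:stringsandbands} gives the claim; conversely, every object of $\Rep(\dX)$ in the essential image of $\MM\circ\EE$ arises this way, because the reconstruction functor $\sfK$ of Theorem \ref{T:mainconstr}, applied to the triple attached to such a string or band, returns exactly the complex produced by the corresponding gluing diagram.

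\textbf{Transferring the isomorphism criteria.} The first two assertions of the theorem --- every indecomposable of $\mathsf{Hot}^b(A-\mathsf{pro})$ is a band or a string complex, and no band complex is isomorphic to a string complex --- are now immediate from Theorem \ref{T:stringsandbands}. For strings, $S^\bu(\overline{v}) \cong S^\bu(\overline{v}')$ iff $S(\underline{v}) \cong S(\underline{v}')$ iff $\underline{v}' \in \{\underline{v}, \underline{v}^\circ\}$, and reduction commutes with reversal, giving $\overline{v}' \in \{\overline{v}, \overline{v}^\circ\}$. For bands, Theorem \ref{T:stringsandbands} yields $B(\underline{w}, m, \pi) \cong B(\underline{w}', m', \pi')$ iff $m = m'$ and either $\underline{w}' = \underline{w}^{(k)}$ with $\pi' = \pi$, or $\underline{w}' = (\underline{w}^\circ)^{(k)}$ with $\pi' = \pi^{\,\overline{\sigma(k, \underline{w})}}$; a $k$-shift of $\overline{w}$ corresponds to a $2k$-shift of $\underline{w}$, and in $\dX$ every $\sim$-pair joins two letters of $\dF$ (conjugate $q$-letters) or, in the skew case, two letters of $\dE$, so $\sigma(k, \underline{w}) = 0$ for all $k$. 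The inversion $\pi \mapsto \pi^{-1}$ appearing in the theorem for the opposite word then comes not from $\sigma$ but from the passage to the reduced diagram: traversing the cyclic gluing diagram in the opposite direction reads the holonomy around the cycle as $\bigl(J_m(\pi)\bigr)^{-1}$, which is conjugate to $J_m(\pi^{-1})$. Collecting these identifications yields the stated list.

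\textbf{Main obstacle.} The delicate part is the realization step: reconciling the three bookkeeping devices in play --- the decomposition of a minimal complex of projective $H$-modules into the $W^\bu_{(i,(a,b))}[-r]$, the decorated matrices that $\MM$ extracts from a triple, and the combinatorics of reduced words and their gluing diagrams --- and checking that the explicit complex read off a gluing diagram is genuinely, and independently of the auxiliary ordering of direct summands within a given cohomological degree, the complex reconstructed by $\sfK$ from the matching object of $\Rep(\dX)$. The eigenvalue bookkeeping for cyclic words (the inversion under reversal, and the vanishing of the $\sigma$-contribution) is the other place where one must be careful with conventions.
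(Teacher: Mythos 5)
Your proposal follows exactly the paper's route: compose the reductions $\EE$ (Theorem \ref{T:mainconstr}) and $\MM$ (Theorem \ref{T:BurbanDrozdTriples}), identify the essential image via Lemma \ref{L:reductionOne}, check that $(\MM\circ\EE)$ sends the explicit gluing-diagram complexes to the string and band representations, and pull back the classification from Theorem \ref{T:stringsandbands}. In fact your write-up is more explicit than the paper's proof on the two points it leaves implicit --- the verification that the decorated matrices of $\EE(X^\bu)$ match the canonical string/band forms, and the reconciliation of the vanishing $\sigma$-invariant with the $\pi\mapsto\pi^{-1}$ rule for reversed bands --- so it is correct and essentially identical in method.
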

\begin{proof} For a band datum  $(\overline{w}, m, \pi)$, let $B(\overline{w}, m, \pi)$ be the corresponding indecomposable object of $\Rep(\dX)$. Let
 $\bigl(Y^\bu(\overline{w}), V^\bu(\overline{w}), \theta(\overline{w}, m, \pi)\bigr)$ be the corresponding object of $\Tri^b(A)$
and $X^\bu = B^\bu(\overline{w}, m, \pi)$ be the corresponding complex, constructed in Definition \ref{D:FromGluingToComplexes}.
It follows from the construction that  we have isomorphisms $H\otimes_A X^\bu \stackrel{g}\lar Y^\bu(\overline{w})$ and $\bar{A}\otimes_A X^\bu \stackrel{h}\lar V^\bu(\overline{w})$,
making the following diagram commutative:
$$
\xymatrix{
\bar{H} \otimes_{\bar{A}} \bigl(\bar{A}\otimes_A X^\bu\bigr) \ar[d]_-{\mathsf{id} \otimes h}\ar[r]^{\theta_{X^\bu}} & \bar{H} \otimes_{H} \bigl(H\otimes_A X^\bu\bigr) \ar[d]^-{\mathsf{id} \otimes g} \\
\bar{H} \otimes_{\bar{A}} V^\bu(\overline{w}) \ar[r]^-{\theta(\overline{w}, m, \pi)} & \bar{H} \otimes_{H} Y^\bu(\overline{w})
}
$$
A similar statement is true for a string datum $\overline{v}$.
In other words,  we have:
$$
(\MM \circ \EE) \bigl(B^\bu(\overline{w}, m, \pi)\bigr) \cong B(\overline{w}, m, \pi) \quad \mbox{\rm and} \quad (\MM \circ \EE) \bigl(S^\bu(\overline{v})\bigr) \cong S(\overline{v})
$$
(here, we follow the notation of diagram  (\ref{E:Summary})).
The result is a consequence of Theorem \ref{T:stringsandbands}, Theorem \ref{T:BurbanDrozdTriples} and Lemma \ref{L:reductionOne}.
\end{proof}

\subsection{Unbounded homotopy categories} In the case $\mathsf{gl.dim}(A) < \infty$, Theorem \ref{T:IndecGentle} gives a complete classification of  indecomposable objects of the derived category
$D^b(A-\mathsf{mod})$. However, the case $\mathsf{gl.dim}(A) = \infty$ requires some additional  efforts.

\begin{theorem}\label{T:UnboundedIndecom} Let $A$ be a gentle algebra. Then the indecomposable objects of the unbounded homotopy category $\mathsf{Hot}(A-\mathsf{pro})$ are the following ones.

\smallskip
\noindent
1.~Band complexes $B^\bu(\overline{w}, m, \pi)$ and string complexes  $S^\bu(\overline{v})$ as in Theorem \ref{T:IndecGentle} (i.e.~the indecomposable objects of $\mathsf{Hot}^b(A-\mathsf{pro})$).

\smallskip
\noindent
2.~Infinite string complexes $S^\bu(\overline{u})$, where $\overline{u}$ is either

\begin{itemize}
\item a semi--infinite word  of the form
$x_1 \sim x_2 - x_3 \sim x_4 - \dots,$ where $x_1 = q_{((i, k), r)}^{((i, \bar{k}), r \pm 1)}$ for some $r \in \ZZ$, $(i, k), (i, \bar{k}) \in \Omega$ is such that  $(i, k) \in \Omega$ is untied
or of the form $y_0 - y_1 \sim y_2 - \dots,$ where $y_0 = q_{((i, k), r)}^{((i, m_i+1), r - 1)}$ for some $((i, k), r) \in \Omega$;

\item
or an infinite word $\dots -z_{-2} \sim z_{-1}- z_1 \sim z_{2} - \dots$.
\end{itemize}
In all these cases, the word $\overline{u}$ has to satisfy the following constraint: for any $r \in \ZZ$,  $\overline{u}$ contains only finitely many
elements of the set $\dF_{r}$. The infinite complex $S^\bu(\overline{u})$ is defined by the same rules as the finite ones in the case of
 $\mathsf{Hot}^b(A-\mathsf{pro})$; see Definition \ref{D:gluingdiagrams} and Definition \ref{D:FromGluingToComplexes}.

\smallskip
\noindent
Next, an infinite string complexes is never  isomorphic to a band complex or a finite string complex. Moreover,
\begin{itemize}
\item Let $\overline{u}$ be a semi--infinite string parameter and $\overline{u}'$ an infinite string parameter. Then we have: $S^\bu(\overline{u}) \not\cong S^\bu(\overline{u}')$.
\item Let $\overline{u}$  and $\overline{u}'$ be semi--infinite string parameters. Then $S^\bu(\overline{u}) \cong S^\bu(\overline{u}')$ if and only if
$\overline{u} = \overline{u}'$.
\item Let $\overline{u}$  and $\overline{u}'$ be infinite string parameters. Then $S^\bu(\overline{u}) \cong S^\bu(\overline{u}')$ if and only if
$\overline{u}' = \overline{u}$ or $\overline{u}' = \overline{u}^\circ$, where $\overline{u}^\circ$ is the opposite word to $\overline{u}$.
\end{itemize}

\smallskip
\noindent
Finally, an infinite string complex $S^\bu(\overline{u})$ belongs to $\mathsf{Hot}^{-}(A-\mathsf{pro})$ if and only if there exists $r \in \ZZ$ such that
$\overline{u}$ does not contain any elements of the set $\dF_{i}$ for $i \ge r$. A similar condition characterizes infinite string complexes, which belong to
$\mathsf{Hot}^{+}(A-\mathsf{pro})$.
\end{theorem}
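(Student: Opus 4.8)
The plan is to extend the functorial reduction $\mathsf{Hot}^\ast(A-\mathsf{pro}) \stackrel{\EE}\lar \Tri^\ast(A) \stackrel{\MM}\lar \Rep^\ast(\dX)$, established for $\ast = b$ in Theorem~\ref{T:mainconstr} and Theorem~\ref{T:BurbanDrozdTriples}, to the remaining cases $\ast \in \{+,-,\emptyset\}$, and then to read off the list of indecomposables from the classification of \emph{locally finite} indecomposable representations of the bunch of chains $\dX = \dX(\vec{\sm}, \simeq)$. The functor $\EE$ is already handled: Theorem~\ref{T:mainconstr} is stated and proved for all $\ast \in \{b,+,-,\emptyset\}$, so $\EE$ is full, essentially surjective and reflects isomorphism classes and indecomposability for the unbounded homotopy categories as well.

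The first step is to prove the unbounded analogue of Theorem~\ref{T:BurbanDrozdTriples}. Here $\Rep^\ast(\dX)$ is understood as the category of representations of $\dX$ that are finite dimensional in each fixed degree $r$, so that the countable index set $\Sigma = \Omega \times \ZZ$ is now genuinely infinite and infinitely many of the decorated matrices $\Theta^r_{(i,j)}$ may be nonzero. The functor $\MM$ constructed in Subsection~\ref{SS:TriplesMP} makes sense verbatim; fullness and the description of its essential image (collections of square non-degenerate decorated matrices) are the same computation as in the bounded case. The only genuinely new point is that $\MM$ still reflects isomorphism classes and indecomposability: by the analysis of $\mathsf{Hot}^\ast(H-\mathsf{pro})$ underlying diagram~(\ref{E:hierarchyofmorphisms}), an endomorphism $\varphi \in \End_{\Tri^\ast(A)}(T)$ lying in $\ker(\MM)$ is strictly upper triangular with respect to the finite filtration of $T$ in each individual degree, so $\ker(\MM)$ is a degreewise-nilpotent ideal of $\End_{\Tri^\ast(A)}(T)$; consequently $1_T - \varphi$ is invertible, with inverse given by a Neumann series that is a finite sum in each degree. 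Granting this, the argument of the bounded case — lift an isomorphism $\MM(T) \cong \MM(S)$ to $\widetilde{u}, \widetilde{v}$ using fullness of $\MM$, then invert $1_T - \widetilde{v}\widetilde{u}$ — goes through unchanged, and since the essential image of $\MM$ is closed under direct summands, $\MM$ also reflects indecomposability.

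The second step is to import the classification of indecomposable objects of $\Rep^\ast(\dX)$ in this locally finite setting. For a finite index set this is Theorem~\ref{T:stringsandbands}; the extension to $\Sigma = \Omega \times \ZZ$ with locally finite representations is exactly the generalization alluded to in the comments following Theorem~\ref{T:BunchOfSemiChains} and carried out in \cite{Nodal} (see also \cite{BondarenkoBlaBla}): the reduction algorithm of \cite{NazarovaRoiter, BondarenkoMain} applies literally and produces, besides the finite strings $S(\underline{v})$ and finite bands $B(\underline{w}, m, \pi)$, exactly one further family — the infinite strings $S(\underline{u})$ with $\underline{u}$ one-sided or two-sided infinite and subject to local finiteness — and no infinite bands, since a band parameter is a cyclic, hence finite, word. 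The isomorphism criteria are the evident analogues of those in Theorem~\ref{T:stringsandbands}: finite strings up to $\underline{v} \leftrightarrow \underline{v}^\circ$; bands up to shift and $\underline{w} \leftrightarrow \underline{w}^\circ$ with $\pi \mapsto \pi^{\pm 1}$; two-sided infinite strings up to $\underline{u} \leftrightarrow \underline{u}^\circ$; and one-sided infinite strings rigidly, because the distinguished finite endpoint of such a word (an untied vertex of the form $q_{((i,k),r)}^{((i,\bar{k}), r\pm 1)}$, or a symbol $q_{((i,k),r)}^{((i,m_i+1), r-1)}$) cannot appear at the infinite end, so $\underline{u}^\circ$ is not again a semi-infinite word in standard form.

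The third step matches the essential image of $\MM \circ \EE$, exactly as in Lemma~\ref{L:reductionOne}: a string or band object of $\Rep^\ast(\dX)$ lies in the image iff all its matrices $\Theta^r_{(i,j)}$ are square and non-degenerate, which by the explicit form of $S(-)$ and $B(-)$ forces the underlying word to contain elements of both $\dE$ and $\dF$, to begin and (where it terminates) to end with an untied vertex or a $q_{((i,k),r)}^{((i,m_i+1),r-1)}$-symbol, and — the new condition in the unbounded case — to be locally finite, i.e. to meet each $\dF_r$ in only finitely many elements. Erasing the $\dE$-symbols as in Definition~\ref{D:BandStringsReduced} and running the gluing-diagram and complex constructions of Definitions~\ref{D:gluingdiagrams} and~\ref{D:FromGluingToComplexes} verbatim passes from such words to the infinite string complexes $S^\bu(\overline{u})$ of the statement, and exactly as in the proof of Theorem~\ref{T:IndecGentle} one checks $(\MM \circ \EE)\bigl(S^\bu(\overline{u})\bigr) \cong S(\overline{u})$; combined with the reflection properties of $\EE$ and $\MM$ this yields the list of indecomposables and the isomorphism criteria. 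Finally, the characterization of $\mathsf{Hot}^{\pm}(A-\mathsf{pro})$ is a direct translation: inspecting Definition~\ref{D:gluingdiagrams}, the complex $S^\bu(\overline{u})$ vanishes in all sufficiently large degrees precisely when $\overline{u}$ contains no element of $\dF_i$ for $i \ge r$ for some $r$, and dually for sufficiently small degrees. The \textbf{main obstacle} is the unbounded analogue of Theorem~\ref{T:BurbanDrozdTriples} — in particular the degreewise-nilpotency argument that keeps $\MM$ faithful to isomorphism classes and indecomposability once endomorphism rings cease to be finite dimensional — together with the unbounded matrix-problem classification (canonical forms for infinite strings, absence of infinite bands), which I would cite from \cite{Nodal}.
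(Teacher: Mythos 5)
Your proposal follows essentially the same route as the paper: extend the reduction $\mathsf{Hot}^\ast(A-\mathsf{pro}) \to \Tri^\ast(A) \to \Rep^\ast(\dX)$ to the unbounded setting, invoke the classification of locally finite dimensional representations of the bunch of chains (citing \cite{Nodal} and \cite{BondarenkoBlaBla}, as the paper does), and match the essential image as in Lemma~\ref{L:reductionOne}. Your degreewise-nilpotency argument for why $\MM$ still reflects isomorphism classes once endomorphism rings are infinite dimensional is a correct filling-in of a point the paper leaves implicit (note that $I^m=0$ even gives a uniform bound), so the proposal is correct.
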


\begin{proof}
As it was explained in Subsection \ref{SS:TriplesMP}, also in the unbounded cases, we can reduce the description of indecomposable objects
of the category of triples $\Tri^\ast(A)$ to a matrix problem. Namely, we have a full functor $\Tri(A) \stackrel{\MM}\lar \Rep^\infty(\dX)$ (respectively,
$\Tri^\pm(A) \stackrel{\MM}\lar \Rep^\pm(\dX)$), reflecting isomorphism classes and indecomposability of objects, where $\Rep^\infty(\dX)$ (respectively,
$\Rep^\pm(\dX)$) is the category of locally finite dimensional representations of the bunch of chains $\dX$. The indecomposable objects of $\Rep^\infty(\dX)$
can be classified in the same way as for $\Rep(\dX)$ (the only difference between $\Rep^\infty(\dX)$ and $\Rep(\dX)$ in that we may have infinitely many
non--zero matrices in the matrix problem (\ref{E:transfrule})). An interested reader might consult for \cite[Subsection 7.6]{BurbanDrozdMemoirs} for an overview. The key point is that the reduction procedure, described in \cite[Subsection 12.6]{BurbanDrozdMemoirs} can be  applied to the infinite case literally. The outcome is that the only indecomposable objects of $\Rep^\infty(\dX)$ which do not belong to $\Rep(\dX)$, are the infinite string representations given by an infinite or semi--infinite parameter as in the statement of the theorem. See also \cite[Theorem C.5]{Nodal} and \cite{BondarenkoBlaBla}.
\end{proof}

\begin{remark}
Note the the Krull--Remak--Schmidt property is also true in the unbounded homotopy category $\mathsf{Hot}(A-\mathsf{pro})$; see \cite[Proposition A2]{Nodal}.
\end{remark}

\begin{example} Let $A$ be the gentle algebra from Example \ref{Ex:MyFavorite}. Consider the  semi--infinite string $\overline{u}$ corresponding to  the following gluing diagram (going up by periodicity):
$$
\xymatrix{
\dots  \ar@{.}[d]           &  & & \\
                    Q_{(1, 2)} \ar[r]^-{a}               & Q_{(1, 1)} \ar@{.}[d] & & \\
Q_{(2, 3)} \ar[r]^-{dc} \ar@{.}[d] & Q_{(2, 1)}  & & \\
Q_{(1, 3)} \ar[r]^-{b} & Q_{(1, 2)} \ar@{.}[d] & &    \\
                   & Q_{(2, 2)} \ar[r]^-{c} & Q_{(2, 1)} \ar@{.}[d] & \\
                   & Q_{(1, 3)} \ar[r]^-{ba} \ar@{.}[d] & Q_{(1, 1)}  & \\
 & Q_{(2, 3)} \ar[r]^-{d}  & Q_{(2, 2)} \ar@{.}[d]  & \\
 & & Q_{(1, 2)} \ar[r]^-{a}  & Q_{(1, 1)} \ar@{.}[d]   \\
 & & & Q_{(2,1)}
}
$$
This diagram defines  the following infinite string complex  $S^\bu(\overline{u})$  of the homotopy category $\mathsf{Hot}^-(A-\mathsf{pro})$:
$$
\dots \lar
P_1 \oplus P_2 \oplus P_3 \xrightarrow{\left(\begin{smallmatrix} 0 & c & ba \\ 0 & 0 & d \\ 0 & 0 & 0\end{smallmatrix}\right)}
P_1 \oplus P_2 \oplus P_3 \xrightarrow{\left(\begin{smallmatrix} 0 & a & dc \\ 0 & 0 & b \\ 0 & 0 & 0\end{smallmatrix}\right)} P_1 \oplus P_2 \oplus P_3
\xrightarrow{\left(\begin{smallmatrix} 0 & c & ba \\ 0 & 0 & d \end{smallmatrix}\right)} P_1 \oplus P_2 \xrightarrow{\left(\begin{smallmatrix} 0 & a &
\end{smallmatrix}\right)} P_1.
$$
In the pictorial form, $S^\bu(\overline{u})$ can be represented as follows:
\begin{equation*}
\xymatrix{
&        \dots                                                        & & & &  \\
P_3 \ar[ru]^-{ba} \ar[r]^-{d} & P_2 \ar[r]^-{a}                      & P_1 & &  & \\
& P_3 \ar[ru]^-{dc} \ar[r]^-{b} & P_2 \ar[r]^-{c} & P_1 & \\
&                             &P_3 \ar[ru]^-{ba} \ar[r]^-{d} & P_2 \ar[r]^-{a} & P_1
}
\end{equation*}
\end{example}

\smallskip
\noindent
Our next goal is to describe indecomposable objects of the derived category $D^b(A-\mathsf{mod})$ in the case $\mathsf{gl.dim}(A) = \infty$.
Of course, we have a fully faithful functor $D^b(A-\mathsf{mod}) \lar \mathsf{Hot}^-(A-\mathsf{pro})$, assigning to a complex its projective resolution.
So, our goal is to describe the essential image of this functor.
We begin with the following observation.

\begin{lemma} Let $(Y^\bu, V^\bu, \theta)$ be an object of (possibly unbounded)  category of triples $\Tri^\ast(A)$ (as usual, we assume that
both complexes  $Y^\bu$ and $V^\bu$ are minimal) and $X^\bu$ be the corresponding
object of the homotopy category $\mathsf{Hot}^\ast(A-\mathsf{pro})$. For any $r \in \ZZ$, consider
the morphism of $A$--modules $H^r(V^\bu) \stackrel{\psi^r}\lar H^{r+1}(I Y^\bu)$, defined as the composition
$$
H^r(V^\bu) \stackrel{\widetilde{\theta}^r}\lar H^{r}(\bar{Y}^\bu) \stackrel{\delta^r}\lar H^{r+1}(I Y^\bu),
$$
where $\delta^r$ is the boundary map, associated with the short exact sequence of complexes
$
0 \lar IY^\bu \lar Y^\bu \lar \bar{Y}^\bu \lar 0.
$
Then the following statement are true.
\begin{itemize}
\item Assume that all maps $\psi^i$ are isomorphism for any $i \le r$. Then we have: $H^i(X^\bu) = 0$ for all $i \le r$.
\item Conversely, assume that $H^i(X^\bu) = 0$ for all $i \le r$. Then we have: $\psi^i$ is an isomorphism for all $i < r$.
\end{itemize}
\end{lemma}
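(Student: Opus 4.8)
The plan is to read off both statements from the long exact cohomology sequence of the short exact sequence of complexes of $A$--modules
$$
0 \lar I Y^\bu \lar X^\bu \stackrel{p}\lar V^\bu \lar 0,
$$
which is exactly the top row of the pull--back diagram (\ref{E:reconstr}). Since $X^\bu$ is by construction the pull--back of $\pi\colon Y^\bu \to \bar Y^\bu$ along $\widetilde\theta\colon V^\bu \to \bar Y^\bu$, this row is degreewise exact, and diagram (\ref{E:reconstr}) presents it as the domain of a morphism of short exact sequences of complexes onto $0 \to I Y^\bu \to Y^\bu \stackrel{\pi}\to \bar Y^\bu \to 0$, with vertical components $\bigl(\mathsf{id}_{I Y^\bu}, \imath, \widetilde\theta\bigr)$.

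First I would identify the connecting homomorphism of the top row. Denote it by $\partial^i \colon H^i(V^\bu) \to H^{i+1}(I Y^\bu)$, and let $\delta^i$ be the connecting map of the bottom row. Naturality of the connecting homomorphism with respect to the above morphism of short exact sequences gives $\partial^i = \delta^i \circ H^i(\widetilde\theta) = \delta^i \circ \widetilde\theta^i = \psi^i$ (the first equality up to the identity automorphism of $H^{i+1}(I Y^\bu)$ coming from the left vertical map). This is the only place where the precise shape of the reconstruction is used, and the verification that the boundary map of $0 \to I Y^\bu \to X^\bu \to V^\bu \to 0$ really is the composite $H^r(V^\bu) \xrightarrow{\widetilde\theta^r} H^r(\bar Y^\bu) \xrightarrow{\delta^r} H^{r+1}(I Y^\bu)$ defining $\psi^r$ is the step I expect to require the most care; everything afterwards is formal.

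With this identification in place, the long exact sequence reads
$$
\cdots \lar H^i(I Y^\bu) \lar H^i(X^\bu) \lar H^i(V^\bu) \stackrel{\psi^i}\lar H^{i+1}(I Y^\bu) \lar H^{i+1}(X^\bu) \lar \cdots,
$$
and both claims follow by chasing it. If $\psi^j$ is an isomorphism for all $j \le r$, then for $i \le r$ injectivity of $\psi^i$ forces $H^i(X^\bu) \to H^i(V^\bu)$ to vanish, hence $H^i(I Y^\bu) \to H^i(X^\bu)$ is surjective; since $\psi^{i-1}$ is surjective, the same map is zero, so $H^i(X^\bu) = 0$. Conversely, if $H^j(X^\bu) = 0$ for all $j \le r$, then for each $i < r$ both $H^i(X^\bu)$ and $H^{i+1}(X^\bu)$ vanish, so in the displayed sequence $\psi^i$ is simultaneously injective and surjective, i.e.\ an isomorphism. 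Finally, I would note that the minimality of $Y^\bu$ and $V^\bu$ only serves to make $\bar Y^\bu$ have zero differential --- so that $H^i(\bar Y^\bu) = \bar Y^i$ and $\psi^i$ is the map described in the statement --- and is otherwise not needed for the argument.
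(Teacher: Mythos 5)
Your proof is correct and is essentially the paper's own argument: the paper likewise takes the long exact cohomology sequence of the top row of diagram (\ref{E:reconstr}) and observes that its connecting homomorphism is $\psi^r$ (the paper's displayed sequence writes $\bar{Y}^\bu$ where $V^\bu$ is meant, but the intent is the same). You merely make explicit the naturality-of-the-boundary-map step and the diagram chase that the paper leaves to the reader.
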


\begin{proof}
According to (\ref{E:reconstr}), we have a short exact sequence
\begin{equation}\label{E:shortExact}
0 \lar IY^\bu \lar X^\bu \lar \bar{Y}^\bu \lar 0
\end{equation}
in the category of complexes $\mathsf{Com}^\ast(A-\mathsf{mod})$. It remains  to observe that the morphism $\psi^r$ can be identified with the $r$-th boundary map  in the long exact  cohomology sequence of the short exact sequence (\ref{E:shortExact}).
\end{proof}

\smallskip
\noindent
It is clear that the map  $\psi^r$ is an isomorphism only if the connecting homomorphism $H^{r}(\bar{Y}^\bu) \stackrel{\delta^r}\lar H^{r+1}(I Y^\bu)$
is an epimorphism. By naturality of $\psi^r$, this  is the case if and only if it is so  for each indecomposable direct summand of $Y^\bu$.

\begin{lemma}\label{L:NormalizationComplexCohom}
Let $1 \le i \le t$, $1 \le b < a \le m_i+1$, $l \in \ZZ$ and $Y^\bu := W^\bu_{(i, (a, b))}[-l]$. Then the following statements are true.
\begin{itemize}
\item $H^{i}(I Y^\bu) \ne 0$ if and only if $i = l$.
\item The boundary map  $\delta^{l-1}$ is surjective if and only if $b = a+1 \le m_i$.
\end{itemize}
\end{lemma}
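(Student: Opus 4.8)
The plan is to analyze the complex $Y^\bu = W^\bu_{(i,(a,b))}[-l]$ term by term. Recall that $W^\bu_{(i,(a,b))} = \bigl(\dots \to 0 \to Q_{(i,a)} \xrightarrow{\kappa} Q_{(i,b)} \to 0 \to \dots\bigr)$ with $Q_{(i,b)}$ in degree $0$ (and $Q_{(i,a)}$ in degree $-1$), where $\kappa$ is the unique nonzero map, injective because $H$ is hereditary; its cohomology is $W_{(i,(a,b))} = Q_{(i,b)}/Q_{(i,a)}$, located in degree $0$. Applying the shift, $Y^\bu$ has $Q_{(i,a)}$ in degree $l-1$, $Q_{(i,b)}$ in degree $l$, and cohomology only in degree $l$. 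Now $IY^\bu$ is the subcomplex with the same terms but the differential $\kappa$ has coefficients in $I$ already (it \emph{is} in $I$), so $IY^\bu = \bigl(\dots \to 0 \to IQ_{(i,a)} \xrightarrow{\kappa} IQ_{(i,b)} \to 0 \to \dots\bigr)$ sitting in degrees $l-1$ and $l$. Since $\kappa$ is injective, $\kappa|_{IQ_{(i,a)}}$ is injective, so $H^{l-1}(IY^\bu)=0$ and $H^i(IY^\bu)=0$ for all $i\ne l$; the only possibly nonzero cohomology is $H^l(IY^\bu) = IQ_{(i,b)}/\kappa(IQ_{(i,a)})$. The first bullet then reduces to checking that this quotient is nonzero, which I would do by a direct computation in $H_i = T_{m_i}$ (or $T_{m_i,\Sigma_i}$): $Q_{(i,b)}$ is the $b$-th indecomposable projective, $Q_{(i,a)}$ the $a$-th, $\kappa$ the canonical inclusion, and $IQ_{(i,b)}$ strictly contains $\kappa(IQ_{(i,a)})$ whenever $b<a\le m_i+1$ (one checks dimensions: $\dim IQ_{(i,b)} = m_i + 1 - b$ versus $\dim IQ_{(i,a)} = m_i+1-a$, and $a>b$, and within our convention $Q_{(i,m_i+1)}=0$ so $a=m_i+1$ is the stalk case with $IQ_{(i,a)}=0$). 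So $H^l(IY^\bu)\ne 0$ always (for $b<a\le m_i+1$), confirming the first bullet.

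For the second bullet, I would write out the long exact cohomology sequence of $0\to IY^\bu\to Y^\bu\to \bar Y^\bu\to 0$, where $\bar Y^\bu = \bar H\otimes_H Y^\bu = \bigl(\dots\to 0\to \bar Q_{(i,a)}\xrightarrow{0}\bar Q_{(i,b)}\to 0\to\dots\bigr)$ (degrees $l-1,l$, zero differential by minimality, see \eqref{E:splittingcompl}). Thus $H^{l-1}(\bar Y^\bu)=\bar Q_{(i,a)}$ and $H^l(\bar Y^\bu)=\bar Q_{(i,b)}$. The connecting map $\delta^{l-1}\colon H^{l-1}(\bar Y^\bu) = \bar Q_{(i,a)} \to H^l(IY^\bu) = IQ_{(i,b)}/\kappa(IQ_{(i,a)})$ is, unwinding the snake lemma, induced by lifting an element of $\bar Q_{(i,a)} = Q_{(i,a)}/IQ_{(i,a)}$ to $Q_{(i,a)}$, applying $\kappa$, and projecting into $IQ_{(i,b)}/\kappa(IQ_{(i,a)})$ — which is well defined precisely because $\kappa$ of a lift lands in $\mathsf{rad}(Q_{(i,b)}) = IQ_{(i,b)}$ once we mod out by $\kappa(IQ_{(i,a)})$ (note $\kappa$ is not in degree $0$ an isomorphism onto the radical unless $a=b+1$). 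So $\delta^{l-1}$ is surjective iff $\kappa(Q_{(i,a)}) + \kappa(IQ_{(i,a)}) = IQ_{(i,b)}$, i.e.\ iff $\kappa(Q_{(i,a)}) = IQ_{(i,b)}$, i.e.\ iff $\kappa$ identifies $Q_{(i,a)}$ with the full radical of $Q_{(i,b)}$. In $T_{m_i}$ the radical of the $b$-th projective is the $(b+1)$-th projective, so this happens exactly when $a = b+1$ and $b+1\le m_i$ (if $a=m_i+1$ then $Q_{(i,a)}=0$ while $IQ_{(i,b)}\ne 0$ unless $b=m_i$, but then $a=m_i+1\ne b+1=m_i+1$... careful: if $b=m_i$ then $IQ_{(i,m_i)}=0$ and $\delta$ is trivially surjective, but also $a=b+1=m_i+1$, consistent). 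This gives the stated criterion $b = a+1\le m_i$ — reading the roles of $a$ and $b$ as in the lemma's indexing where the differential goes $Q_{(i,a)}\to Q_{(i,b)}$ with $b<a$, so "the radical is reached" means $b = a - 1$, equivalently $a = b+1$; I would state it carefully matching \eqref{E:ComplexW}.

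The main obstacle I anticipate is purely bookkeeping: getting the index conventions exactly right, in particular the edge cases $a = m_i+1$ (stalk complex, $Q_{(i,m_i+1)}=0$) and $b = m_i$ (where $IQ_{(i,b)} = 0$), and reconciling the lemma's phrasing "$b = a+1 \le m_i$" with the orientation of $\kappa$ in \eqref{E:ComplexW}. There is also the question of whether the skew-gentle blow-up (the $2\times 2$ blocks at indices in $\Sigma_i$) changes anything; since $\bar H_{(i,j)}$ is Morita equivalent to $\kk$ and the radical filtration of the projectives is unaffected by the blow-up, the computation goes through unchanged, but I would note this explicitly. Once the index conventions are pinned down the rest is a one-line radical computation in $T_m$.
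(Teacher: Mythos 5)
Your proof is correct and follows essentially the same route as the paper's: both rest on the identification $IQ_{(i,j)} = \mathsf{rad}\bigl(Q_{(i,j)}\bigr) = Q_{(i,j+1)}$, so that $IY^\bu = W^\bu_{(i,(a+1,b+1))}[-l]$ has cohomology concentrated in degree $l$, and both reduce surjectivity of $\delta^{l-1}$ to the statement that $\kappa$ identifies $Q_{(i,a)}$ with the full radical of $Q_{(i,b)}$, i.e.\ $a=b+1$ (the paper phrases this as a length count: the source $\bar{Q}_{(i,a)}$ is simple or zero while the target has length $a-b$). You are also right that the condition printed in the lemma, ``$b=a+1\le m_i$'', is a typo for $a=b+1\le m_i$, which is how the paper's own proof states it.
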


\begin{proof}
Recall that $I$ is the radical of the algebra $H$. Therefore, $I Q_{(i, j)} = \mathsf{rad}\bigl(Q_{(i, j)}\bigr) = Q_{(i, j+1)},$ where  $Q_{(i, m_i+1)} = 0$. As a consequence,  $I W^\bu_{(i, (a, b))} = W^\bu_{(i, (a+1, b+1))}$, implying the first statement.
Next, the connecting homomorphism $\delta^{l-1}$ can be identified with a map
$$
\bar{Q}_{(i, a)} \stackrel{\widetilde{\delta}_{(a, b)}}\lar Q_{(i, b+1)}/Q_{(i, a+1)} = W_{i, (a+1, b+1))}.
$$
The module $\bar{Q}_{(i, a)}$ is either a simple module (if $a \le m_i$) or zero (if $a = m_i +1$). The module $W_{(i, (a+1, b+1))}$ is has length $b-a$.
Therefore, the morphism $\widetilde{\delta}_{(a, b)}$ can be surjective only if $a = b +1 \le m_i$. A straightforward computation shows, that in this case,
$\widetilde{\delta}_{(a, b)}$ is even an isomorphism.
\end{proof}

\begin{definition} Let $(\vec\sm, \simeq)$ be a datum defining a gentle algebra $A$ and $\Omega = \Omega(\vec\sm)$. For an auxiliary  symbol $\dagger$, we define the map $\Omega \stackrel{\tau}\lar \Omega \cup \{\ddagger\}$ by the following rules. For any  $(i, j) \in \Omega$ we put:
\begin{itemize}
\item $\tau(i, j) = (k, l)$, if $j < m_i$ and there exists $(k, l) \in \Omega$ such that $(i, j+1) \simeq (k, l)$.
\item $\tau(i, j) = \ddagger$ otherwise.
\end{itemize}
A \emph{special cycle} in $\Omega$ is a (non--periodic) sequence of elements $(i_1, j_1), \dots, (i_k, j_k)$ in $\Omega$ such that
$(i_{r+1}, j_{r+1}) = \tau(i_r, j_r)$ for any $r \in \NN$, where $(i_{k+1}, j_{k+1}) := (i_1, j_1)$.
\end{definition}

\begin{remark}
Let $A = \kk[\stackrel{\rightarrow}{Q}]/\langle L\rangle$ be a presentation of a gentle algebra $A$ in terms of the   path algebra of a quiver with relations. It follows, that a special cycle
$(i_1, j_1), \dots, (i_k, j_k) \in \Omega$ defines an oriented cycle  $\varpi = \varpi_k \dots \varpi_1$ in the path algebra $\kk[\stackrel{\rightarrow}{Q}]$:
\begin{equation}\label{E:specialcycle}
(i_1, j_1) \stackrel{\varpi_1}\lar (i_1, j_1 +1) = (i_2, j_2) \stackrel{\varpi_2}\lar (i_2, j_2+1) \lar \dots \stackrel{\varpi_k}\lar (i_1, j_1)
\end{equation}
such that $\varpi_{r+1} \varpi_r = 0$ in $A$ for all $1 \le r \le k$. Conversely, it is not difficult to see, that any such path $\varpi$ in the path algebra
$\kk[\stackrel{\rightarrow}{Q}]$ as in (\ref{E:specialcycle})
defines
a special cycle in $\Omega$.
\end{remark}

\smallskip
\noindent
 For any special cycle  $\varpi:= (i_1, j_1), \dots, (i_k, j_k)$  in $\Omega$, consider the following sequence $\widetilde{u}(\varpi) = $
 \begin{align*}
& q_{((i_1, j_1), 0)}^{((i_1, j_1 +1), -1)} \sim q^{((i_1, j_1), 0)}_{((i_1, j_1 +1), -1)}- q_{((i_2, j_2), -1)}^{((i_2, j_2 +1), -2)} \sim q_{((i_2, j_2), -2)}^{((i_2, j_2 +1), -1)} - \dots - \\
& q_{((i_1, j_1), -k)}^{((i_1, j_1 +1), -k-1)} \sim q^{((i_1, j_1), -k)}_{((i_1, j_1 +1), -k-1)} -\dots
\end{align*}

\begin{lemma}\label{L:resolutiongentle} Let $\varpi:= (i_1, j_1), \dots, (i_k, j_k)$ be a special cycle in $\Omega$. Then
$$
\overline{u}(\varpi) := q_{((i_k, j_k), 0)}^{((i_k, m_k +1), -1)} - \widetilde{u}(\varpi)
$$
is a semi--infinite string parameter in the sense of Theorem \ref{T:UnboundedIndecom} and  the corresponding string complex $S^\bu(\overline{u}\bigl(\varphi)\bigr)$ is a minimal projective resolution
of an $A$--module.
\end{lemma}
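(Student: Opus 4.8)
The statement has two parts: (i) $\overline{u}(\varpi)$ is a legitimate semi--infinite string parameter in the sense of Theorem \ref{T:UnboundedIndecom}, and (ii) the associated string complex $S^\bu\bigl(\overline{u}(\varpi)\bigr)$ is a \emph{minimal projective resolution} of an $A$--module, i.e.\ it is isomorphic, in $\mathsf{Hot}^-(A-\mathsf{pro})$, to a complex concentrated in non--negative cohomological degrees with cohomology only in degree $0$. I would treat the two parts in this order.

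\emph{First} I would check that $\overline{u}(\varpi)$ satisfies the combinatorial constraints of a semi--infinite word. The word starts with the untied element $q_{((i_k,j_k),0)}^{((i_k,m_k+1),-1)}$ (untied because the upper index is $m_k+1$, which indexes a stalk complex; such symbols are exactly the untied ones), then proceeds via a $-$ to the periodic tail $\widetilde u(\varpi)$. Each block $q^{((i_r,j_r),\,-\!r)}_{((i_r,j_r+1),\,-r-1)}\sim q_{((i_r,j_r),\,-r)}^{((i_r,j_r+1),\,-r-1)}$ is of the shape allowed in a band/string word, and each connecting subword $-\,q^{\cdots}_{\cdots}-\,q^{\cdots}_{\cdots}$ links $Q_{(i_r,j_r+1)}$ to $Q_{(i_{r+1},j_{r+1})}$; since $(i_{r+1},j_{r+1})=\tau(i_r,j_r)$ precisely means $(i_r,j_r+1)\simeq(i_{r+1},j_{r+1})$ in $\Omega$, the gluing condition holds. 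The alternation condition $\rho_i\neq\rho_{i+1}$ is built into the way $\widetilde u(\varpi)$ is written. Finally, the crucial finiteness condition from Theorem \ref{T:UnboundedIndecom} — for each $r\in\ZZ$ only finitely many elements of $\dF_r$ occur — holds because at cohomological degree $-r$ the word uses only the finitely many symbols coming from the $r$-th step of the cycle (the cycle has finite length $k$, and the degrees strictly decrease as we go down the tail). This part is essentially bookkeeping.

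\emph{Second}, the homological statement. Here I would invoke the Lemma immediately preceding the statement together with Lemma \ref{L:NormalizationComplexCohom}. Let $(Y^\bu,V^\bu,\theta)$ be the object of $\Tri^-(A)$ corresponding to $\overline{u}(\varpi)$, with $X^\bu=S^\bu(\overline{u}(\varpi))$. By construction of the gluing diagram, $Y^\bu$ is a direct sum of shifted indecomposable complexes $W^\bu_{(i,(a,b))}[-l]$; reading off the periodic tail, for each degree $l\le 0$ the summands occurring are exactly those of the form $W^\bu_{(i_r,(j_r+1,\,j_r))}[\,\cdot\,]$, i.e.\ the ``one--step'' complexes with $a=b+1$. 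The point of the construction is that $\tau$ was designed precisely so that each successive $W^\bu$ contributes the top of the next projective. By Lemma \ref{L:NormalizationComplexCohom}, for such a summand the boundary map $\delta^{l-1}$ is surjective (indeed an isomorphism), because the relevant pair satisfies $b=a+1\le m_i$ — wait, one must be careful with the indexing convention; concretely one checks that at each step the relevant quotient module $W_{(i_r,(j_r+2,\,j_r+1))}$ has length one and the connecting map hits it isomorphically. Hence the map $\psi^l\colon H^l(V^\bu)\to H^{l+1}(IY^\bu)$ of the preceding Lemma is an isomorphism for every $l<0$ (and one checks the initial stalk summand $q_{((i_k,j_k),0)}^{((i_k,m_k+1),-1)}$ does not obstruct this in any negative degree). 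By the first bullet of that Lemma, $H^i(X^\bu)=0$ for all $i<0$; and since $X^\bu$ is, by Definition \ref{D:FromGluingToComplexes}, a complex of projectives supported in degrees $\le 0$ which is minimal (no contractible summands, as the gluing diagram is reduced and the words non--periodic), it is concentrated in degrees $\{\dots,-2,-1,0\}$ with the only possibly non--zero cohomology in degree $0$. Therefore $X^\bu$ is a minimal projective resolution of the $A$--module $H^0(X^\bu)$.

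\emph{Main obstacle.} The conceptual heart is clear — it is the ``telescoping'' of connecting maps encoded by $\tau$ — but the delicate part is the compatibility of the $\bar A$/$\bar H$--linear gluing map $\theta$ with the boundary operators $\delta^l$ at \emph{every} step of the infinite tail simultaneously, i.e.\ verifying that the composite $\psi^l=\delta^l\circ\widetilde\theta^l$ is genuinely an isomorphism and not merely surjective, uniformly in $l$. This forces one to track carefully the indexing convention $Q_{(i,m_i+1)}=0$, the identification $IQ_{(i,j)}=Q_{(i,j+1)}$, and the precise form of $\theta$ coming from the matrix $\Theta^r_{(i,j)}$ attached to the word; the periodicity of $\varpi$ is what makes a single verification suffice, but getting the degree shifts and the $\simeq$--identifications exactly right is where the real care is needed. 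I would organize this as a single induction on $|l|$, reducing at each step to the already--established Lemma \ref{L:NormalizationComplexCohom} applied to the indecomposable summand $W^\bu_{(i_r,(j_r+1,j_r))}$ active in that degree.
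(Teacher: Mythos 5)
Your proof takes a genuinely different, and considerably heavier, route than the paper's. The paper simply identifies $S^\bu\bigl(\overline{u}(\varpi)\bigr)$ with the explicit periodic complex $\dots \lar P_{\gamma_2}\xrightarrow{\varpi_1}P_{\gamma_1}\lar 0$, where $\gamma_l$ is the vertex of the quiver corresponding to $(i_l,j_l)$ and the differentials are right multiplication by the arrows of the special cycle, and then observes that exactness in negative degrees is immediate from the definition of a gentle algebra: the kernel of $\cdot\,\varpi_l$ on $P_{\gamma_{l+1}}$ is generated by the unique arrow $\varpi_{l+1}$ out of $\gamma_{l+1}$ composing to zero with $\varpi_l$, i.e.\ it equals the image of $\cdot\,\varpi_{l+1}$. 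Your combinatorial first part is correct (and is exactly the bookkeeping the paper leaves implicit), and you are right to repair the misprint $b=a+1$ versus $a=b+1$ in Lemma \ref{L:NormalizationComplexCohom}. But routing the homological statement through the triple category and the two preceding lemmas, while workable, is where your argument goes wrong.

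The concrete gap: your assertion that $\psi^l$ is an isomorphism for every $l<0$, and hence that the first bullet of the preceding Lemma applies with $r=-1$, fails at $l=-1$. The initial stalk summand of $Y^\bu$ coming from the untied symbol $q_{((i_k, j_k), 0)}^{((i_k, m_k +1), -1)}$ sits in degree $0$ and contributes its radical $IQ_{(i_k,j_k)}=Q_{(i_k,j_k+1)}$ as a direct summand of $H^{0}(IY^\bu)$; this module is nonzero precisely because the special--cycle condition forces $j_k<m_{i_k}$. Since $H^{-1}(V^\bu)\cong\kk$ while $H^{0}(IY^\bu)$ contains this extra summand on top of the one--dimensional contribution of the tail, $\psi^{-1}$ is injective but never surjective --- contrary to your parenthetical claim that the stalk summand ``does not obstruct this in any negative degree''. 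The desired conclusion $H^i(X^\bu)=0$ for $i\le -1$ is still true, but you cannot get it from the first bullet of that Lemma; you must go back to the long exact sequence, where $H^i(X^\bu)=0$ requires only surjectivity of $\psi^{i-1}$ and injectivity of $\psi^{i}$. Thus the correct version of your argument is: $\psi^{i}$ is an isomorphism for $i\le -2$ (your telescoping argument, using that every tail summand is a one--step complex with $a=b+1\le m_i$, does establish this, since the relevant component of $\widetilde\theta^{\,i}$ is nonzero because $\theta^{i}$ is an isomorphism), and $\psi^{-1}$ is injective because its component into the one--dimensional tail summand of $H^0(IY^\bu)$ is nonzero. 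With that repair your approach closes, but as written the appeal to ``all $\psi^l$ isomorphisms'' is not available.
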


\begin{proof}
For any $1 \le l \le k$, let $\gamma_l$ be the vertex  of the quiver $\stackrel{\rightarrow}{Q}$ corresponding to the element $(i_l, j_l) \in \Omega$.
Then $S^\bu(\overline{u}\bigl(\varphi)\bigr)$ is the complex
$$
   \dots  \lar  P_{\gamma_1} \stackrel{\varpi_k}\lar P_{\gamma_k} \stackrel{\varpi_{k-1}}\lar   \dots \stackrel{\varpi_2}\lar  P_{\gamma_2} \stackrel{\varpi_1}\lar  P_{\gamma_1} \lar 0.
$$
It follows from the definition of gentle algebra that this complex is acyclic apart of the zero degree.
\end{proof}

\begin{theorem}\label{T:boundedInfGlobDim} Let $A = A(\vec\sm, \simeq)$ be a gentle algebra. Then the following results are true.

\smallskip
\noindent
1.~$\mathsf{gl.dim}(A) = \infty$  if and only if there exists a special cycle in $\Omega$.

\smallskip
\noindent
2.~Assume that $\mathsf{gl.dim}(A) = \infty$. Then the only indecomposable objects of the homotopy category $\mathsf{Hot}^-(A-\mathsf{pro})$ with bounded cohomology, which do not belong to $\mathsf{Hot}^-(A-\mathsf{pro})$,   are
the infinite string complexes $S^\bu(\overline{u})$ satisfying the following additional conditions:
\begin{itemize}
\item if $\overline{u}$ is a semi--infinite word than after a finite part, it is equal to a shift of the sequence $\widetilde{u}(\varpi)$, attached
to a special cycle $\varpi$ in $\Omega$.
\item Similarly, if $\overline{u}$ is an infinite word than asymptotically at $\pm \infty$ it is given by the sequences $\widetilde{u}(\varpi_1)$
and $\widetilde{u}(\varpi_2^\circ)$ attached to some special cycles $\varpi_1$ and $\varpi_2$ in $\Omega$.
\end{itemize}
\end{theorem}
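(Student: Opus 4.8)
The plan is to prove Part~2 first, and then to read off Part~1 from it together with Lemma~\ref{L:resolutiongentle}. The implication $\Leftarrow$ of Part~1 is immediate: if $\varpi$ is a special cycle, then Lemma~\ref{L:resolutiongentle} exhibits $S^\bu(\overline{u}(\varpi))$ as an infinite minimal projective resolution of an $A$--module, so that module has infinite projective dimension and $\mathsf{gl.dim}(A)=\infty$.

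For Part~2, let $X^\bu$ be an indecomposable object of $\mathsf{Hot}^-(A-\mathsf{pro})$ with bounded cohomology which does not lie in $\mathsf{Hot}^b(A-\mathsf{pro})$. By Theorem~\ref{T:UnboundedIndecom}, $X^\bu\cong S^\bu(\overline{u})$ for an infinite or semi--infinite word $\overline{u}$, since band complexes and finite string complexes all belong to $\mathsf{Hot}^b(A-\mathsf{pro})$; being in $\mathsf{Hot}^-(A-\mathsf{pro})$, the word $\overline{u}$ contains no element of degree $\ge r_0$ for some $r_0\in\ZZ$, and as the terms of $X^\bu$ are finite--dimensional, the degrees along $\overline{u}$ must tend to $-\infty$ in each direction in which $\overline{u}$ is infinite. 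Write $Y^\bu=Y^\bu(\overline{u})$ for the associated minimal complex of projective $H$--modules, with the decomposition~\eqref{E:ComplexSumOfW} into indecomposable complexes $W^\bu_{(e,(a,b))}[-l]$, and choose $r$ with $H^i(X^\bu)=0$ for all $i\le r$. By the Lemma preceding Lemma~\ref{L:NormalizationComplexCohom} and the observation following it, $\psi^i$ is an isomorphism, hence the connecting map $\delta^i\colon H^i(\bar{Y}^\bu)\to H^{i+1}(IY^\bu)$ is surjective, for every $i<r$; since $\delta^i$ respects the decomposition of $Y^\bu$, Lemma~\ref{L:NormalizationComplexCohom} forces every indecomposable summand $W^\bu_{(e,(a,b))}[-l]$ with $l\le r$ to satisfy $a=b+1\le m_e$, i.e. to be of the ``short'' form $\bigl(Q_{(e,b+1)}\hookrightarrow Q_{(e,b)}\bigr)[-l]$.

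It remains to identify the combinatorics of $\overline{u}$ in low degrees. In the gluing diagram of $\overline{u}$ every complex sitting in degrees $\le r$ is short, and two consecutive such complexes are joined by a dotted edge, corresponding to a subword of type $-$. By the constraints on reduced words in Definition~\ref{D:BandStringsReduced} and the composition rules of the gluing diagram (Definition~\ref{D:gluingdiagrams}), such an edge identifies the $Q_{(e,b+1)}$--leg of one short complex, in degree $l-1$, with a $Q_{(e',b')}$--leg of the next, which forces the degree to drop by one and forces $(e,b+1)\simeq(e',b')$, i.e. $(e',b')=\tau(e,b)$. Thus, going downward in degree, the successive short complexes realise precisely the forward $\tau$--orbit of some $(e_1,b_1)\in\Omega$ with $b_1<m_{e_1}$; since this orbit is infinite it never reaches $\ddagger$ (an untied $(e,b+1)$ would terminate the word), and since $\Omega$ is finite the orbit is eventually periodic, its period being by definition a special cycle $\varpi$. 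Comparing with Definition~\ref{D:FromGluingToComplexes} one sees that the corresponding tail of $\overline{u}$ is a shift of $\widetilde{u}(\varpi)$, and that the non--periodic part of $\overline{u}$ above it is finite, because in the short regime the degrees are strictly decreasing, so $\overline{u}$ cannot re--enter the range of degrees $>r$. In the two--sided case both directions of $\overline{u}$ run down to $-\infty$ in degree, and applying the same analysis to each end (to the second end after replacing $\overline{u}$ by $\overline{u}^\circ$) produces special cycles $\varpi_1,\varpi_2$ with asymptotics $\widetilde{u}(\varpi_1)$ and $\widetilde{u}(\varpi_2^\circ)$. Conversely, any $S^\bu(\overline{u})$ of the listed shape lies in $\mathsf{Hot}^-(A-\mathsf{pro})$, does not lie in $\mathsf{Hot}^b(A-\mathsf{pro})$, and has bounded cohomology, since by Lemma~\ref{L:resolutiongentle} the periodic tails are acyclic in all sufficiently negative degrees. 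Finally, $\Rightarrow$ of Part~1 follows: if $\mathsf{gl.dim}(A)=\infty$, some simple $A$--module $S$ has infinite projective dimension, its minimal projective resolution is an indecomposable, non--perfect object of $\mathsf{Hot}^-(A-\mathsf{pro})$ with bounded cohomology, and Part~2 applied to it yields a special cycle in $\Omega$.

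The step I expect to be most delicate is the passage, in the previous paragraph, from ``all summands in degrees $\le r$ are short'' to the exact shape $\widetilde{u}(\varpi)$: one has to match the degree shifts, the alternation of the symbols $\sim$ and $-$ in a reduced word, and the composition rules of the gluing diagram against the definitions of $\tau$ and of $\widetilde{u}(\varpi)$, and it is precisely here that one must argue carefully that the non--periodic part of $\overline{u}$ is genuinely finite. Everything else is a formal consequence of Lemma~\ref{L:NormalizationComplexCohom} and the Lemma preceding it, and of the classification in Theorem~\ref{T:UnboundedIndecom}.
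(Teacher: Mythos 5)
Your overall architecture coincides with the paper's: reduce to infinite string complexes via Theorem \ref{T:UnboundedIndecom}, use the lemma on $\psi^i$ together with Lemma \ref{L:NormalizationComplexCohom} to force the summands in low degrees to be of the short form $W^\bu_{(e,(b+1,b))}[-l]$, and then identify the tail of the word with a $\tau$--orbit, hence with a special cycle. The treatment of Part 1 and of the converse inclusion in Part 2 is fine.

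The gap sits exactly at the step you flag as delicate, and your justification there is circular. Knowing that every summand in degrees $\le r$ is short does not tell you how consecutive short complexes are glued: a dotted edge emanating from $\bigl(Q_{(e,b+1)}\to Q_{(e,b)}\bigr)[-l]$ may a priori be attached to the target leg $Q_{(e,b)}$ in degree $l$ rather than to the source leg $Q_{(e,b+1)}$ in degree $l-1$; the word combinatorics of Definition \ref{D:BandStringsReduced} permit both. If two short complexes are merged at their targets (a ``left turn''), the resulting projective $P_\gamma$ is a local sink of the gluing diagram receiving only radical maps, so $H^l\bigl(S^\bu(\overline{u})\bigr)\ne 0$ --- this is what excludes such configurations in low degrees, and it is not implied by shortness of the individual summands. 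If two short complexes are merged at their sources (a ``right turn''), the degree climbs, and since the complex lies in $\mathsf{Hot}^-(A-\mathsf{pro})$ it must eventually descend again through a left turn; only after excluding both kinds of turn does one know that the degrees strictly decrease along the tail and that each edge realizes $\tau$. The paper closes this with the explicit left--turn/right--turn counting argument. Alternatively, you could extract the same conclusion from the full strength of the statement that $\psi^i$ is an \emph{isomorphism}: its injectivity rules out target--target mergings and unmerged target legs, and its surjectivity rules out source--source mergings; but you only used the weaker consequence that $\delta^i$ is surjective on each summand. As written, ``in the short regime the degrees are strictly decreasing'' assumes what has to be proved.
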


\begin{proof} The stated criterion  for a gentle algebra to have infinite global dimension is not new. It follows for instance from Kalck's description
\cite{Kalck} of the singularity category $D_{sg}(A) = D^b(A-\mathsf{mod})/\mathsf{Hot}^b(A-\mathsf{pro})$ of a gentle algebra $A$. We give now another proof, which also gives a new insight on the description of the category $D_{sg}(A)$.

Assume that  $\Omega$ admits a special cycle.  Then  $\mathsf{gl.dim}(A) = \infty$ by Lemma \ref{L:resolutiongentle}.
Conversely, assume that $\mathsf{gl.dim}(A) = \infty$. Then there exists an infinite indecomposable complex in $\mathsf{Hot}^-(A-\mathsf{pro})$ with bounded
cohomology. According to Theorem \ref{T:UnboundedIndecom}, it is some infinite string complex $S^\bu(\overline{u})$, attached to some infinite or semi--infinite string parameter $\overline{u}$.

\smallskip
\noindent
We say that the gluing diagram of $\overline{u}$ has a \emph{right turn} if it contains a subdiagram of the form
$$
\xymatrix{
Q_{(j, c)} \ar[r]^-{\kappa''} \ar@{.}[d]& Q_{(j, d)} \\
Q_{(i, a)} \ar[r]^-{\kappa'} & Q_{(i, b)}
}
$$
Similarly, we say that the gluing diagram $\overline{u}$ has a \emph{left turn} if it contains a subdiagram of the form
$$
\xymatrix{
Q_{(j, c)} \ar[r]^-{\kappa''} & Q_{(j, d)} \ar@{.}[d]\\
Q_{(i, a)} \ar[r]^-{\kappa'} & Q_{(i, b)}
}
$$
In the latter case, let $\gamma = \{(i, b), (j, d)\} \in \widetilde{\Omega}$, whereas $\alpha, \beta \in \widetilde{\Omega}$ be the classes containing
the elements $(i,a)$ and $(j, c)$, respectively. Since the morphism of projective $A$--modules
$$
P_\alpha \oplus P_\beta \xrightarrow{(\kappa', \kappa'')} P_\gamma
$$
is not surjective (the image of this map belongs to the radical of $P_\gamma$), each left turn contributes to cohomology of the complex
$S^\bu(\overline{u})$. As a consequence, $\overline{u}$ contains only finitely many left turns. On the other hand, $\overline{u}$ contains only finitely many right turns, too. Indeed, since $S^\bu(\overline{u})$ belongs to $\mathsf{Hot}^-(A-\mathsf{pro})$, after each right turn should come a left turn.
Now, assume $\overline{u} = (x_0) - x_1\sim x_2 - \dots - x_{2k-1} \sim x_{2k} - \dots $ is a semi--infinite word. Since $S^\bu(\overline{u})$ has bounded cohomology, Lemma \ref{L:NormalizationComplexCohom} implies that there exists some $l \in \NN$ such that for any $k \ge l$, each subsequence
$x_{2k-1} \sim x_{2k}$ of $\overline{u}$ has the form $q_{((i,j), r)}^{(i, j+1), r-1)} \sim q^{((i,j), r)}_{(i, j+1), r-1)}$ for some
$r \in \ZZ$, $1 \le i \le t$ and $1 \le j \le m_i-1$. Moreover, we may assume that for all $k \ge l$, the subword
$x_{2k-1} \sim x_{2k} - x_{2k+1} \sim x_{2k+2}$ is not a right turn. There exists only one possibility for that:
$
\left(x_{2k+1} \sim x_{2k+2}\right) = \left(q_{((\bar{i},\bar{j}), r-1)}^{(\bar{i}, \bar{j}+1), r-1)} \sim q_{((\bar{i},\bar{j}), r-1)}^{(\bar{i}, \bar{j}+1), r-1)}\right),
$
where $(i, j+1) \sim (\bar{i}, \bar{j})$ in $\Omega$. Proceeding inductively and taking into account that $\Omega$ is a finite set, we get
a special cycle $\varpi = (i, j), (\bar{i}, \bar{j})$. The case of an infinite string parameter $\overline{u}$ is analogous.
\end{proof}

\section{Matrix  problem associated with a semi--orthogonal decomposition}\label{S:MPSemiorthDecomp}

\smallskip
\noindent
Let $\catD$ be a triangulated category and $\catD = \langle \catE, \catF\rangle$ be a semi--orthogonal decomposition. Recall that this means that
\begin{itemize}
\item Both $\catE$ and $\catF$ are full triangulated subcategories of $\catD$.
\item For any $E \in \mathsf{Ob}(\catE)$ and $F \in \mathsf{Ob}(\catF)$ we have: $\Hom_{\catD}(F, E) = 0$.
\item For any $D \in \mathsf{Ob}(\catD)$, there exists a distinguished triangle
\begin{equation}\label{E:Decomposition}
E \stackrel{w}\lar F \stackrel{u}\lar D \stackrel{v}\lar E[1]
\end{equation}
for some $E = E_D \in \mathsf{Ob}(\catE), F = F_D\in \mathsf{Ob}(\catF)$ and $w \in \Hom_{\catD}(E, F)$.
\end{itemize}

\begin{definition}\label{D:CommaCat} Let $\catD$ be a triangulated category and $\catD = \langle \catE, \catF\rangle$ a semi--orthogonal decomposition.
Consider the \emph{comma category} $\mathsf{Comma}(\catE, \catF)$ attached to the pair of category embeddings
$
\catE \lar \catD \longleftarrow \catF.
$
In other words, objects of $\mathsf{Comma}(\catE, \catF)$ are triples $(E, F, w) = \bigl(E \stackrel{w}\lar F\bigr)$, where $E \in \mathsf{Ob}(\catE)$, $F \in \mathsf{Ob}(\catF)$
and $w \in \Hom_{\catD}(E, F)$, whereas a morphism $(E, F, w) \lar  (E', F', w')$ is given by  a pair morphisms
$E \stackrel{f}\lar E'$ and $F \stackrel{g}\lar F'$ in $\catD$ making the diagram
$$
\xymatrix{
E \ar[r]^w  \ar[d]_f & F \ar[d]^g \\
E' \ar[r]^{w'}   & F'
}
$$
commutative.
\end{definition}
\begin{proposition}\label{P:MPSemiOrthogDec} For any $D \in \mathsf{Ob}(\catD)$, choose a distinguished  triangle (\ref{E:Decomposition}). Then the following statements are true.
\begin{itemize}
\item The assignment $D \mapsto (E, F, w)$ extends to a functor
\begin{equation}
\catD \stackrel{\EE}\lar \mathsf{Comma}(\catE, \catF),
\end{equation}
which is moreover full and essentially surjective.
\item Moreover, $\EE$ reflects isomorphism classes and indecomposability of objects:
\begin{itemize}
\item For $D_1, D_2 \in \mathsf{Ob}(\catD)$ we have:  $\EE(D_1) \cong \EE(D_2)$ in $\mathsf{Comma}(\catE, \catF)$ if and only if $D_1 \cong D_2$ in $\catD$;
\item For $D \in \mathsf{Ob}(\catD)$ we have:  $\EE(D)$ is indecomposable in $\mathsf{Comma}(\catE, \catF)$  if and only if
$D$ is indecomposable in $\catD$.
\end{itemize}
\end{itemize}
\end{proposition}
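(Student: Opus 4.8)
The plan is to imitate, almost verbatim, the proof of Theorem \ref{T:mainconstr}, with the comma category $\mathsf{Comma}(\catE,\catF)$ playing the role of the category of triples and the cone of $w$ replacing the reconstruction pull--back. The whole argument is ``soft'': it uses only the orthogonality $\Hom_{\catD}(\catF,\catE)=0$, the fact that $\catE$ and $\catF$ are shift--stable, and the uniqueness statements packaged in the triangulated axioms.

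First I would make $\EE$ into a functor. Fix once and for all a triangle $E_D\xrightarrow{w_D}F_D\xrightarrow{u_D}D\xrightarrow{v_D}E_D[1]$ for every $D$. Given $\phi\colon D_1\to D_2$, the composite $v_{D_2}\,\phi\,u_{D_1}\colon F_{D_1}\to E_{D_2}[1]$ lies in $\Hom_{\catD}(F_{D_1},E_{D_2}[1])=0$, so $\phi\,u_{D_1}$ factors as $u_{D_2}\,g$ for a unique $g\colon F_{D_1}\to F_{D_2}$; uniqueness follows because $\Hom_{\catD}(F_{D_1},E_{D_2})=0$ makes $u_{D_2}\circ(-)$ injective. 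Axiom (TR3) applied to the resulting commuting square $(g,\phi)$ yields $f\colon E_{D_1}\to E_{D_2}$ with $g\,w_{D_1}=w_{D_2}\,f$ and $v_{D_2}\,\phi=f[1]\,v_{D_1}$, and $f$ is again unique since $\Hom_{\catD}(F_{D_1},E_{D_2})=0$ forces $(-)\circ v_{D_1}$ to be injective on $\Hom_{\catD}(E_{D_1}[1],E_{D_2}[1])$. Putting $\EE(\phi):=(f,g)$, the identities $\EE(\id)=\id$ and $\EE(\phi'\phi)=\EE(\phi')\EE(\phi)$ are immediate from these uniqueness properties.

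Next, essential surjectivity: given $(E,F,w)\in\Ob(\mathsf{Comma}(\catE,\catF))$, complete $w$ to a triangle $E\xrightarrow{w}F\to D\to E[1]$. Since $E\in\catE$ and $F\in\catF$, this is a semi--orthogonal decomposition triangle for $D$, hence isomorphic to the chosen one by the standard uniqueness of such decompositions; this isomorphism of triangles is exactly an isomorphism $(E,F,w)\cong\EE(D)$. For fullness, start from a morphism $(f,g)\colon\EE(D_1)\to\EE(D_2)$, so $g\,w_{D_1}=w_{D_2}\,f$; apply (TR3) to the square $(f,g)$ to get $\phi\colon D_1\to D_2$ with $\phi\,u_{D_1}=u_{D_2}\,g$ and $v_{D_2}\,\phi=f[1]\,v_{D_1}$. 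The pair produced from $\phi$ by the construction of $\EE$ then satisfies the same two equations that characterise $g$ and $f$ uniquely, so $\EE(\phi)=(f,g)$.

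Finally, the reflection statements. If $(f,g)$ is an isomorphism in $\mathsf{Comma}(\catE,\catF)$, the morphism $\phi$ provided by fullness sits in a morphism of triangles two of whose vertical arrows are isomorphisms, so $\phi$ is an isomorphism by the five--lemma for triangulated categories; the converse is trivial. For indecomposability I would note that direct sums of triangles are triangles, that any two decomposition triangles of $D'\oplus D''$ are isomorphic to the direct sum of decomposition triangles of $D'$ and of $D''$ (so $\EE$ is additive), and that $\EE$ kills no nonzero object (if $\EE(D)=0$ then $E_D=F_D=0$, whence $D=\mathsf{Cone}(0\to 0)=0$); combining additivity, essential surjectivity and the fact that $\EE$ reflects isomorphisms then gives, by the usual argument, that $\EE(D)$ is indecomposable iff $D$ is. The only point requiring genuine care — the analogue of the delicate step in Theorem \ref{T:mainconstr} — is verifying that the assignment on morphisms really is functorial and that it reproduces the given $(f,g)$ in the fullness argument; everything else is a formal consequence of semi--orthogonality together with the octahedral axiom. (Note that $\EE$ need not be faithful, which is why only fullness, essential surjectivity, and reflection of isomorphism classes and indecomposability are asserted.)
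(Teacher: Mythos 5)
Your proof is correct, and its overall architecture matches the paper's: functoriality and essential surjectivity both rest on the uniqueness (up to isomorphism) of the decomposition triangle, which you establish by hand from semi--orthogonality and which the paper delegates to \cite[Corollaire 1.1.10]{BBD}; fullness is TR3 in both cases; and indecomposability is deduced formally, where you are in fact slightly more explicit than the paper in recording that $\EE$ is additive and annihilates no nonzero object. The one step where you genuinely diverge is the reflection of isomorphism classes. The paper identifies the kernel of $\EE$ on morphism spaces as the ideal $I(D,D')$ of maps factoring both through an object of $\catE$ and through an object of $\catF$, observes that the composite of two such maps vanishes because $\Hom_{\catD}(\catF,\catE)=0$, and concludes that $1_D-f'f$ is square--zero, hence $f'f$ invertible --- the same ``nilpotent kernel ideal'' device used in Theorem~\ref{T:mainconstr} and Theorem~\ref{T:BurbanDrozdTriples}. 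You instead note that the lift $\phi$ of an isomorphism $(f,g)$ is the third component of a morphism of distinguished triangles whose other two components are invertible, and invoke the triangulated five--lemma. Your route is shorter and perfectly valid here; the paper's buys an explicit description of the failure of faithfulness of $\EE$, which is of independent interest and keeps the proof uniform with the other reduction functors in the article. One cosmetic slip: your closing remark credits the octahedral axiom, but nothing in the argument uses more than TR1--TR3.
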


\begin{proof} This result is analogous to \cite[Theorem 4.1]{DrozdHomotopyType}.
The functoriality of $\EE$ is a consequence of \cite[Corollaire 1.1.10]{BBD} (see also \cite{GelfandManin}), which also insures that the object $\bigl(E \stackrel{w}\lar F\bigr)$ is unique in $\mathsf{Comma}(\catE, \catF)$ up to an automorphism. It is easy to see that $\EE$ is essentially surjective. Indeed, let $\bigl(E \stackrel{w}\lar F\bigr)$ be any object of $\mathsf{Comma}(\catE, \catF)$ and $D$ be a  cone of $w$. Then it follows from \cite[Corollaire 1.1.10]{BBD} that $\bigl(E \stackrel{w}\lar F\bigr)\cong \EE(D)$ in $\mathsf{Comma}(\catE, \catF)$. The axiom TR3 of triangulated categories implies that  $\EE$ is full. Next, it is not difficult to see  that the kernel $I(D, D')$ of the map
$$
\Hom_{\catD}(D, D') \stackrel{\EE}\lar \Hom_{\mathsf{Comma}(\catE, \catF)}\bigl(\EE(D), \EE(D')\bigr)
$$
consists of those  morphisms $D \stackrel{f}\lar D'$, which admit a factorization
$$
\xymatrix{
                     & E \ar@{.>}[rd] &  \\
D \ar@{.>}[ru] \ar@{.>}[rd] \ar[rr]^f & &  D'   \\
  & F \ar@{.>}[ru] &                    \\
}
$$
for some objects $E$ of $\catE$ and $F$ of $\catF$. As a consequence, for any objects $D, D'$ and $D''$ of the category $\catD$,
$f \in I(D, D')$ and $g \in I(D', D'')$ we have: $gf = 0$.

\smallskip
\noindent
Now we are prepared to  show that the functor $\EE$ reflects the isomorphism classes of objects. Assume, $D, D' \in \mathsf{Ob}(\catD)$ are such that
$\EE(D) \cong \EE(D')$. Let
$\EE(D) \xrightarrow{(g, h)} \EE(D')$ and $\EE(D) \xrightarrow{(g', h')} \EE(D')$ be mutually inverse isomorphisms. Since the functor $\EE$ is full, there exists morphisms $D \stackrel{f}\lar D'$ and $D' \stackrel{f'}\lar D$ such that $\EE(f) = (g, h)$ and $\EE(f') = (g', h')$. It follows that
$1_D - f' f \in I(D, D)$, hence $(1_D - f' f)^2 = 0$. As a consequence, the morphism
$$
f' f = 1_D - (1_D - f' f)
$$
is an automorphism of $D$. Analogously, $f f'$ is an automorphism of $D'$, so both morphisms $f$ and $f'$ are isomorphisms.

\smallskip
\noindent
The final statement about indecomposability follows from the facts that the functor $\EE$ is essentially surjective and reflects isomorphism classes of objects.
\end{proof}

\smallskip
\noindent
Our next goal is to apply
the construction of Proposition \ref{P:MPSemiOrthogDec}  to the semi--orthogonal decomposition
$D^b\bigl(B-\mathsf{mod}) = \bigl\langle D^b\bigl(\bar{A}-\mathsf{mod}), D^b\bigl(H-\mathsf{mod})\bigr\rangle$ obtained in Theorem \ref{T:skewgentleResolution}.  We follow the same notation as in Subsection \ref{SS:TriplesandMP}.

\begin{itemize}
\item Recall that the set $\widetilde\Omega$ parameterizes isomorphism classes of simple and indecomposable projective  $A$--modules. For any $\alpha
\in \widetilde\Omega$, let $S_\gamma := \widetilde\sF(\bar{A}_\gamma)$ be the simple $B$--module corresponding to the simple $A$--module
$\bar{A}_\gamma$ and $\widetilde{P}_\gamma = \widetilde\sF(P_\gamma)$ be the indecomposable projective $B$--module corresponding to the indecomposable projective $A$--module $P_\gamma$.
\item Next, for any $(i, j) \in \Omega$ we denote: $
\widetilde{Q}_{(i, j)} := \sF\bigl(Q_{(i, j)}\bigr) :=
\left(
\begin{array}{c}
Q_{(i, j)} \\
Q_{(i, j)}
\end{array}
\right).
$
Similarly, for any $1 \le i \le t$ and $1 \le b < a \le m_i+1$ we put: $\widetilde{W}{(i, (a, b))} = \sF \bigl({W}_{(i, (a, b))}\bigr)$ and
 $$\widetilde{W}^\bu_{(i, (a, b))} = \sD \sF \bigl({W}^\bu_{(i, (a, b))}\bigr) = \bigl(\dots \lar 0 \lar \widetilde{Q}_{(i, a)} \lar
 \widetilde{Q}_{(i, b)} \lar 0 \lar \dots\bigr).
 $$
 \item The objects of the comma--category $\mathsf{CC}(A):= \mathsf{Comma}\bigl(D^b(\bar{A}-\mathsf{mod}),  D^b(H-\mathsf{mod})\bigr)$ are triples
 $(S^\bu, W^\bu, \theta)$, where
\begin{equation}\label{E:decompositionCCA}
S^\bu = \bigoplus\limits_{\substack{\gamma \in \widetilde\Omega \\ r \in \ZZ}}  S_\gamma[n]^{\oplus n(\gamma, r)} \quad \mbox{\rm and} \quad
W^\bu = \bigoplus\limits_{\substack{1 \le i \le t \\ 1 \le b < a \le m_i+1 \\ r \in \ZZ}} W^\bu_{(i, (a, b))}[r]^{l(i, (a, b), r)}
\end{equation}
for appropriate multiplicities $n(\alpha, r), l(i, (a, b), r) \in \NN_0$, whereas
$S^\bu \stackrel{\theta}\lar W^\bu$ is a morphism in the derived category $D^b(B-\mathsf{mod})$.
\end{itemize}

\smallskip
\noindent
A proof of the following lemma is a straightforward computation.
\begin{lemma} For any $\gamma \in \widetilde{\Omega}$, the corresponding simple $B$--module $S_\gamma$ has projective dimension at most one. Moreover, the minimal projective resolution of $S_\gamma$ is the following:
\begin{itemize}
\item If $\gamma = \{(i, j), (k, l)\}$ is of first type, then we have a resolution:
$$
0 \lar \widetilde{Q}_{(i, j+1)} \oplus \widetilde{Q}_{(k, l+1)} \lar \widetilde{P}_\gamma \lar S_\gamma \lar 0.
$$
\item If $\gamma = \left\{
\begin{array}{cl}
\{((i, j), \pm)\} & \mbox{\it is of the second type} \\
\{((i, j)\} & \mbox{\it or of the third type} \\
\end{array}
\right.
$
then we have a resolution:
$$
0 \lar \widetilde{Q}_{(i, j+1)}  \lar \widetilde{P}_\gamma \lar S_\gamma \lar 0.
$$
\end{itemize}
\end{lemma}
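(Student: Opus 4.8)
The plan is to realise $\widetilde{P}_\gamma$ as the projective cover of $S_\gamma$ and to compute its radical explicitly, showing it is projective of the stated form. Write $e_\gamma\in A$ for the primitive idempotent attached to $\gamma\in\widetilde\Omega$ and $\hat e_\gamma:=\smtr{e_\gamma & 0\\ 0 & 0}\in B$; this is a primitive idempotent of $B$ (its corner algebra is $e_\gamma Ae_\gamma$, which is local), so $\widetilde{P}_\gamma=B\hat e_\gamma$ is an indecomposable projective $B$--module, and since $\hat e_\gamma$ maps to the idempotent $e_\gamma$ of the factor $\bar A$ of $B/\mathsf{rad}(B)\cong\bar A\times\bar H$, its top is $S_\gamma$. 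Hence $\widetilde{P}_\gamma$ is the projective cover of $S_\gamma$ and there is a short exact sequence $0\to\mathsf{rad}(\widetilde{P}_\gamma)\to\widetilde{P}_\gamma\to S_\gamma\to 0$; the whole statement then reduces to identifying $\mathsf{rad}(\widetilde{P}_\gamma)$ with the asserted projective module.

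First I would carry out the matrix computation of the radical. From the shape of $B$ one gets $\widetilde{P}_\gamma=B\hat e_\gamma\cong\smtr{Ae_\gamma\\ Ie_\gamma}$, and since $\mathsf{rad}(\widetilde{P}_\gamma)=L\widetilde{P}_\gamma$ with $L=\mathsf{rad}(B)=\smtr{I & H\\ I & I}$ (Theorem~\ref{T:skewgentleResolution}), the identities $IA=HI=I$ and $I^2\subseteq I$ give at once $\mathsf{rad}(\widetilde{P}_\gamma)\cong\smtr{Ie_\gamma\\ Ie_\gamma}$, the two entries being the left $H$--module $Ie_\gamma$ with the internal structure maps equal to the identity, respectively the multiplication $I\otimes_A Ie_\gamma\to Ie_\gamma$. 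Comparing the actions of the idempotents $e$ and $f$ of $B$ (equivalently, unwinding the standard description of left $B$--modules in terms of quadruples), one recognises this $B$--module as $\sF(Ie_\gamma)$, where $\sF=Be\otimes_H -$ is the exact functor from Section~\ref{S:ResultsSkewGentle}. Since $\sF$ sends projective $H$--modules to projective $B$--modules (indeed $\sF(Q_{(i,j)})=\widetilde{Q}_{(i,j)}$ is an indecomposable projective $B$--module), it remains only to describe $Ie_\gamma$ as a left $H$--module.

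Decompose the image of $e_\gamma$ under $A\hookrightarrow H$ into primitive idempotents according to the type of $\gamma$: it is a single primitive idempotent over $(i,j)$ if $\gamma$ is of the third type, one of the two conjugate idempotents $e_{((i,j),\pm)}$ if $\gamma$ is of the second type (so $(i,j)\simeq(i,j)$), and $e_{(i,j)}+e_{(k,l)}$ if $\gamma=\{(i,j),(k,l)\}$ is of the first type. In the first two cases this yields $Ie_\gamma=\mathsf{rad}(Q_{(i,j)})$, and in the first it yields $Ie_\gamma=\mathsf{rad}(Q_{(i,j)})\oplus\mathsf{rad}(Q_{(k,l)})$. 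Now $\mathsf{rad}(Q_{(i,j)})\cong Q_{(i,j+1)}$ with the convention $Q_{(i,m_i+1)}=0$: this is immediate from $I=\mathsf{rad}(H)$ and the fact that $H$ is, up to Morita equivalence, a product of path algebras of linearly oriented quivers $\vec A_{m_i}$, and it is not disturbed by the blowing--up defining $T_{m,\Sigma}$ (it is already recorded in the proof of Lemma~\ref{L:NormalizationComplexCohom}). Applying $\sF$ then gives $\mathsf{rad}(\widetilde{P}_\gamma)\cong\widetilde{Q}_{(i,j+1)}$ in the second and third cases and $\cong\widetilde{Q}_{(i,j+1)}\oplus\widetilde{Q}_{(k,l+1)}$ in the first, which are precisely the asserted resolutions; minimality is automatic since $\mathsf{rad}(\widetilde{P}_\gamma)$ maps into the radical of $\widetilde{P}_\gamma$, and in particular $\mathsf{pd}_B(S_\gamma)\le 1$. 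The one point demanding care is the bookkeeping needed to make the identification $\mathsf{rad}(\widetilde{P}_\gamma)\cong\sF(Ie_\gamma)$ precise and to track which primitive idempotents of $H$ absorb $e_\gamma$ in each of the three cases; everything else is routine.
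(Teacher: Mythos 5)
Your proof is correct and is precisely the ``straightforward computation'' that the paper leaves to the reader: realise $\widetilde{P}_\gamma=B\hat e_\gamma$ as the projective cover of $S_\gamma$, compute $\mathrm{rad}(\widetilde{P}_\gamma)=L\hat e_\gamma\cong\sF(Ie_\gamma)$ using $IA=HI=I$, and identify $Ie_\gamma$ as $Q_{(i,j+1)}$ (resp.\ $Q_{(i,j+1)}\oplus Q_{(k,l+1)}$) according to how $e_\gamma$ decomposes into primitive idempotents of $H$ in each of the three types of $\gamma$. All the intermediate identifications check out, including the one point you flag (that the submodule $L\hat e_\gamma$ of $\widetilde{P}_\gamma$ carries exactly the matrix-multiplication $B$-action of $\sF(Ie_\gamma)$), and minimality and $\mathsf{pd}_B(S_\gamma)\le 1$ follow as you say.
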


\smallskip
\noindent
Let $\gamma \in \widetilde\Omega$, $1\le i \le t$ and $1 \le b < a \le m_i+1$. Our
next goal is to describe all non--zero  $\Ext$--spaces in $B-\mathsf{mod}$ between the  simple $B$--module $S_\gamma$
and the module  $\widetilde{W}_{(i, (a, b))}$.

\begin{lemma} Let $\gamma \in \widetilde\Omega$, $S_\gamma$ be the corresponding simple $B$--module, $\widetilde{P}_{\gamma}$ the corresponding indecomposable projective $B$--module and $\widetilde{R}_\gamma = \mathsf{rad}(\widetilde{P}_{\gamma})$. Then the all non--vanishing  $\Ext$--spaces
between $S_\gamma$ and  $\widetilde{W}_{(e, (a, b))}$ are one--dimensional and given by the following morphisms between the corresponding projective resolutions.
\begin{itemize}
\item Let $\gamma = \{(i, j), (k, l)\}$ is of the first type.
\begin{itemize}
\item Assume that $j \le m_i-1$ and $l \le m_k-1$. Then we  have a chain of morphisms:
\begin{equation}\label{E:hierarchyofmorphisms2}
\begin{array}{c}
\xymatrix{
   & \widetilde{R}_\gamma \ar[r] \ar[d] & \widetilde{P}_\gamma \ar[d]\\
& \widetilde{Q}_{(i, j+1)} \ar[r] \ar[d] & \widetilde{Q}_{(i, j)} \ar[d] \\
& \dots \ar[d] & \dots \ar[d]\\
& \widetilde{Q}_{(i, j+1)} \ar[r] \ar[d]& \widetilde{Q}_{(i, 1)} \\
& \widetilde{Q}_{(i, j+1)} \ar[d]& \\
\widetilde{Q}_{(i, m_i)} \ar[r]\ar[d] & \widetilde{Q}_{(i, j+1)} \ar[d] & \\
\dots \ar[d] & \dots \ar[d] & \\
\widetilde{Q}_{(i, j+2)} \ar[r] & \widetilde{Q}_{(i, j+1)} &
}
\end{array}
\end{equation}
as well as another such chain  corresponding to  $(k, l) \in \Omega$.
\item Assume that $j \le m_i-1$ and $l = m_{k+1}$. Then we have a chain (\ref{E:hierarchyofmorphisms2}) corresponding to $(i, j)$, whereas
$(k, l)$ gives the chain
\begin{equation}\label{E:anotherchain}
S_\gamma \lar \widetilde{Q}_{(k, m_k)} \lar \dots \lar \widetilde{Q}_{(k, 1)}.
\end{equation}
\item If $i = k$ and $1 \le j < k = m_i$ then we have only one chain of morphisms (\ref{E:hierarchyofmorphisms2}) corresponding to $(i, j)$.
\item Finally, if $j = m_i$ and $l = m_k$ then we have two chains (\ref{E:anotherchain}), corresponding to $(i, m_i)$ and $k, m_k)$.
\end{itemize}
\item Assume that $\gamma = ((i, j), \pm) = \left\{
\begin{array}{cl}
\{((i, j), \pm)\} & \mbox{\it is of the second type} \\
\{((i, j)\} & \mbox{\it or of the third type} \\
\end{array}
\right.
$

\smallskip
\noindent
For $j \le m_i-1$, we have a chain (\ref{E:hierarchyofmorphisms2}) and for $j = m_i$ a chain of the form (\ref{E:anotherchain}).
\end{itemize}
\end{lemma}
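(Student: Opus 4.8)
The approach is to reduce the computation in two stages. In the first stage I would use the short projective $B$-resolutions of the two modules in question to turn $\Ext^\bullet_B(S_\gamma,\widetilde W_{(e,(a,b))})$ into the (co)homology of a two-term complex of $\Hom$-spaces between projectives; in the second stage I would observe that all of these $\Hom$-spaces, and the connecting map between them, are governed entirely by the hereditary algebra $H$, and in fact by a single factor $H_e$, where the problem becomes a routine calculation with interval modules over a linearly oriented quiver $\vec A_{m_e}$. Concretely, since $\mathsf{pd}_B(S_\gamma)\le 1$ by the preceding lemma, we have $\Ext^n_B(S_\gamma,-)=0$ for $n\ge 2$, and both $\Hom$ and $\Ext^1$ are read off the two-term complex
\[
\Hom_B\bigl(\widetilde P_\gamma,\widetilde W_{(e,(a,b))}\bigr)\lar\Hom_B\bigl(\widetilde R_\gamma,\widetilde W_{(e,(a,b))}\bigr)
\]
induced by the inclusion $\widetilde R_\gamma\hookrightarrow\widetilde P_\gamma$, where $\widetilde R_\gamma$ equals $\widetilde Q_{(i,j+1)}$ or $\widetilde Q_{(i,j+1)}\oplus\widetilde Q_{(k,l+1)}$ according to the type of $\gamma$. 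I would also record that applying the exact functor $\sF$ to the $H$-linear resolution $0\to Q_{(e,a)}\to Q_{(e,b)}\to W_{(e,(a,b))}\to 0$ (valid because $H$ is hereditary, with the convention $Q_{(e,m_e+1)}=0$) yields a minimal projective $B$-resolution $0\to\widetilde Q_{(e,a)}\to\widetilde Q_{(e,b)}\to\widetilde W_{(e,(a,b))}\to 0$, minimality holding since the comparison map has image in $\mathsf{rad}(Q_{(e,b)})$ and this is preserved by $\sF$. A generator of a non-vanishing $\Ext$-group will then be represented by a chain map in $\mathsf{Hot}^b(B-\mathsf{pro})$ between these short complexes, which is exactly the kind of chain of morphisms displayed in \eqref{E:hierarchyofmorphisms2} and \eqref{E:anotherchain}.

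Next I would make the reduction to $H$ explicit. The modules $\widetilde P_\gamma$ and $\widetilde Q_{(e,j)}$ are projective $B$-modules (being the images under $\widetilde\sF$, respectively $\sF$, of projectives, and both functors preserve projectivity), so $\Hom_B(\widetilde P_\gamma,N)$ and $\Hom_B(\widetilde Q_{(e,j)},N)$ are the appropriate idempotent truncations of $N$. Evaluating on $N=\widetilde W_{(e,(a,b))}=\left(\begin{smallmatrix}W\\ W\end{smallmatrix}\right)$ identifies these, respectively, with $\bigoplus\Hom_H\bigl(Q_{(p,q)},W_{(e,(a,b))}\bigr)$ taken over the one or two elements $(p,q)$ of $\overline\Omega$ underlying $\gamma$, and with $\Hom_H\bigl(Q_{(e,j)},W_{(e,(a,b))}\bigr)$; the connecting map is precomposition with the inclusions $Q_{(p,q+1)}=\mathsf{rad}(Q_{(p,q)})\hookrightarrow Q_{(p,q)}$. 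Thus $\Hom$, $\Ext^1$ and the relevant chain maps all split as direct sums over the elements underlying $\gamma$ (one summand in the second and third type, two in the first type $\gamma=\{(i,j),(k,l)\}$), and, since $H=H_1\times\dots\times H_t$ with $Q_{(p,q)}$ and $W_{(e,(a,b))}$ supported on the factors $H_p$ and $H_e$, every summand vanishes unless $e$ coincides with the first coordinate of an element underlying $\gamma$. This confines everything to a single factor $H_e$.

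Finally I would carry out the computation over $H_e$. Identifying $Q_{(e,j)}$ with the interval module on $\{j,\dots,m_e\}$ and $W_{(e,(a,b))}=Q_{(e,b)}/Q_{(e,a)}$ with the interval module on $\{b,\dots,a-1\}$, all $\Hom$-spaces between such intervals are at most one-dimensional, $\Hom_H\bigl(Q_{(e,j)},W_{(e,(a,b))}\bigr)\ne 0$ exactly when $b\le j\le a-1$, and the restriction map $\Hom_H\bigl(Q_{(e,j)},W_{(e,(a,b))}\bigr)\to\Hom_H\bigl(Q_{(e,j+1)},W_{(e,(a,b))}\bigr)$ is an isomorphism whenever both sides are nonzero. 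Taking kernels and cokernels then shows that $\Hom=\Ext^0$ is nonzero, and then one-dimensional, precisely when $a=j+1$ and $b\le j$ (the upper rows of \eqref{E:hierarchyofmorphisms2}/\eqref{E:anotherchain}), while $\Ext^1$ is nonzero, and one-dimensional, precisely when $b=j+1$ and $j+2\le a\le m_e+1$ (the lower rows together with the middle stalk $\widetilde Q_{(i,j+1)}$), with the same conclusions for the second predecessor $(k,l)$ of a first-type $\gamma$; one also checks that the explicitly drawn chain maps of \eqref{E:hierarchyofmorphisms2} and \eqref{E:anotherchain} are not null-homotopic and hence represent generators. The various sub-cases in the statement are then exactly the bookkeeping of whether a syzygy $\widetilde Q_{(\bullet,\bullet+1)}$ vanishes (which interchanges the two pictures) and of whether the two predecessors of $\gamma$ fall into the same factor $H_e$. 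I expect this bookkeeping, together with the non-triviality check for each displayed chain map and its match with the pattern already recorded in \eqref{E:hierarchyofmorphisms}, to be the only real work; once the reduction to a Dynkin quiver is in place there is no conceptual obstacle.
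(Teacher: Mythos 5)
The paper states this lemma without proof (like the lemma immediately preceding it, it is left as a ``straightforward computation''), so there is no argument of the authors to compare yours against; your proposal supplies exactly the computation that is being omitted, and it is correct. The three reductions you perform --- (i) using $\mathsf{pd}_B(S_\gamma)\le 1$ to compute $\Hom$ and $\Ext^1$ from the two-term complex $\Hom_B(\widetilde P_\gamma,\widetilde W_{(e,(a,b))})\lar\Hom_B(\widetilde R_\gamma,\widetilde W_{(e,(a,b))})$, (ii) identifying both terms with $\Hom_H$-spaces via the adjunction for $\sF=Q\otimes_H-$ and the idempotent description of $\Hom$ out of $\widetilde P_\gamma$, so that everything splits over the one or two predecessors of $\gamma$ and is supported on a single factor $H_e$, and (iii) computing kernels and cokernels of the restriction maps $W_j\to W_{j+1}$ between interval modules over $\vec{A}_{m_e}$ --- give precisely the conditions ($a=j+1$, $b\le j$ for $\Hom$; $b=j+1$, $j+2\le a\le m_e+1$ for $\Ext^1$) that the displayed chains (\ref{E:hierarchyofmorphisms2}) and (\ref{E:anotherchain}) encode, and the identification of the drawn chain maps with generators is routine. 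One dividend of running the argument uniformly: in the sub-case $\gamma=\{(i,j),(i,m_i)\}$ with $j<m_i$, the predecessor $(i,m_i)$ still contributes the nonzero spaces $\Hom_B\bigl(S_\gamma,\widetilde Q_{(i,b)}\bigr)$ for $b\le m_i$, i.e.\ a chain of type (\ref{E:anotherchain}); the lemma's third bullet, read literally, appears to drop this chain, whereas your computation recovers it (and also shows that for $\widetilde W=\widetilde Q_{(i,j+1)}$ both $\Hom$ and $\Ext^1$ are one-dimensional, which is consistent with the statement since these sit in different degrees).
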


\smallskip
\noindent
\textbf{Summary}. Let $(S^\bu, W^\bu, \theta)$ be an object of the category $\mathsf{CC}(A)$. Then fixing decompositions (\ref{E:decompositionCCA}), we can represent the morphism $\theta$ by a collection of decorated matrices $\Theta$. It follows from the description of non--zero morphisms
between $B$--modules $S_\gamma$ and $\widetilde{W}_{(e, (a, b))}$ that there exists a bunch of semi--chains $\dY$ and a functor
$$
D^b(B-\mathsf{mod}) \stackrel{\EE}\lar \Rep(\dY), \quad (S^\bu, W^\bu, \theta) \mapsto \Theta,
$$
which is fully faithful, essentially surjective and reflecting indecomposability and isomorphism classes of objects.

\end{document}